\newtheorem{theorem}{Theorem}[section]
\newtheorem{lemma}[theorem]{Lemma}
\newtheorem{proposition}[theorem]{Proposition}
\newtheorem{cor}[theorem]{Corollary}
\theoremstyle{definition}
\newtheorem{definition}[theorem]{Definition}
\newtheorem{example}[theorem]{Example}
\theoremstyle{remark}
\newtheorem{remark}[theorem]{Remark}
\numberwithin{equation}{section}
\newcommand{\Q}{\mathbb{Q}}
\newcommand{\C}{\mathbb{C}}
\newcommand{\R}{\mathbb{R}}
\DeclareMathOperator{\id}{Id}
\DeclareMathOperator{\Aut}{Aut}
\title{Non-negatively curved GKM orbifolds}
\author{Oliver Goertsches and Michael Wiemeler\thanks{MW was supported by DFG-Grants HA 3160/6-1 and HA 3160/11-1 and SFB 878.}}
\date{ }
\begin{document}

\maketitle

\begin{abstract}
  In this paper we study non-negatively curved and rationally elliptic GKM$_4$ manifolds and orbifolds. We show that their rational cohomology rings are isomorphic to the rational cohomology of certain model orbifolds.
These models are quotients of isometric actions of finite groups on non-negatively curved torus orbifolds.

Moreover, we give a simplified proof of a characterisation of products of simplices among orbit spaces of locally standard torus manifolds.
This characterisation was originally proved in \cite{MR3355120} and was used there to obtain a classification of non-negatively curved torus manifolds.
\end{abstract}

%-----------------------------------------------------------------------
% End of amsart.template
%-----------------------------------------------------------------------

\section{Introduction}
\label{sec:introduction-2}

The classification of Riemannian manifolds with positive or non-negative sectional curvature is one of the most prominent open problems in differential geometry. Many authors have investigated it under the additional assumption of an isometric Lie group action, e.g.\ \cite{HsiangKleiner}, \cite{SearleYang}, \cite{GroveSearle}, \cite{Wilking}, \cite{FangRong}, \cite{AmannKennard},  \cite{MR2784821}, \cite{MR3355120},  \cite{GGKRW}, \cite{MR3897041}, \cite{MR4088352}. In this paper we continue our investigation \cite{GoertschesWiemeler} of isometric torus actions of GKM type on Riemannian manifolds with sectional curvature bounded from below. 

%By the Chang--Skjelbred Lemma \cite{changSkjelbred}, the equivariant cohomology of an action of a torus \(T\) on a rational cohomology manifold \(M\)  (e.g.,  an orientable manifold or an orientable orbifold) with cohomology concentrated in even degrees can be computed from the equivariant cohomology of the one-skeleton \(M_1\) of the action, that is, from the union \(M_1\) of all the orbits of dimension less than or equal to one.

The GKM condition --- introduced in \cite{GKM} ---  requires that the one-skeleton \(M_1\) of the action, i.e. the union of all orbits of dimension less than or equal to one, is of a particularly simple type.
Namely, it is required that \(M_1\) is a union of two-dimensional spheres, such that the \(T\)-action restricts to a cohomogeneity one action on each two-sphere.
The orbit space \(\Gamma=M_1/T\) is then an \(n\)-valent graph, where \(2n=\dim M\). The isotropy representations at the fixed points induce a labeling of the edges of the graph, as explained in Section \ref{Preliminaries}.
%One can label the edges \(e\) of \(\Gamma\) by the principal isotropy group of the \(T\)-action on the corresponding sphere, or equivalently by weights \(\alpha(e)\in \mathfrak{t}^*\) dual to these codimension one subgroups of \(T\).
%The pair \((\Gamma,\alpha)\) is called the GKM graph of \(M\).
From this labelled graph one can compute the equivariant and non-equivariant rational cohomology rings of a GKM manifold or orbifold.
This is made explicit in the GKM Theorem \cite{GKM}, see Theorem \ref{thm:gkm} below.

%Similarly to the GKM condition, we say that a GKM action is GKM$_k$ if for all \(0\leq k'<k\) the set \(M_{k'}=\{x\in M;\dim Tx\leq k'\}\) is a union of \(2k'\)-dimensional invariant submanifolds \(N_{1}^{2k'},\dots,N_{m_{k'}}^{2k'}\).
Similarly to the GKM condition, we say that an action is GKM$_k$ if for all $0\leq k'<k$ the union of the orbits of dimension at most $k'$ is a union of $2k'$-dimensional invariant submanifolds. Their GKM graphs are then the \(k'\)-dimensional faces of the GKM graph of \(M\). 

In \cite{GoertschesWiemeler} we showed that a positively curved Riemannian manifold admitting an isometric GKM$_3$ torus action has the same real cohomology ring as a compact rank one symmetric space. The assumption of positive curvature forces, by the classification of $4$-dimensional positively curved $T^2$-manifolds \cite{GroveSearle}, the two-dimensional faces of the GKM graph to be just biangles or triangles -- this condition turned out to be a severe enough restriction to classify all occurring graphs.

Considering the same setting for non-negatively instead of positively curved manifolds, we observe that now also quadrangles appear as two-dimensional faces   \cite{HsiangKleiner}, \cite{SearleYang}, which increases the possibilities for the GKM graphs greatly. Still, we are able to show the following theorem on the structure of the GKM graph (without the labelling):

\begin{theorem}
\label{sec:introduction}
  Let \(O\) be a orientable GKM$_4$ orbifold with an invariant metric of non-negative curvature. Then the GKM graph of \(O\) is finitely covered by the vertex-edge graph of a finite product \(\prod_i\Delta^{n_i}\times \prod_i\Sigma^{m_i}\).
\end{theorem}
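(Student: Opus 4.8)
The plan is to extract from the non-negative curvature only a local statement about the GKM graph $\Gamma$ of $O$, and then to argue combinatorially that any GKM graph with this local structure is finitely covered by the vertex-edge graph of a product of the model pieces $\Delta^{n}$ and $\Sigma^{m}$. It is convenient first to replace $O$ by its universal orbifold cover, which is compact: since $O$ is GKM its rational cohomology is concentrated in even degrees, so $\chi(O)=\#O^{T}>0$, and together with non-negative curvature the Cheeger--Gromoll splitting theorem then forces $\pi_1^{\mathrm{orb}}(O)$ to be finite.

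\emph{Geometric input.} By the GKM$_4$ hypothesis every two- and three-dimensional face $F$ of $\Gamma$ is realised by a closed $T$-invariant suborbifold $O_F\subseteq O$, of dimension $4$ respectively $6$, which is again GKM with GKM graph $F$. Each $O_F$ is a fixed point component of a subtorus of $T$, hence totally geodesic, and so inherits a non-negatively curved invariant metric. Invoking the classification of non-negatively curved four-dimensional torus orbifolds --- the orbifold analogue of \cite{HsiangKleiner}, \cite{SearleYang} --- every two-face of $\Gamma$ is a biangle, a triangle or a quadrangle; invoking the corresponding classification in dimension $6$, every three-face of $\Gamma$ agrees, near each of its vertices, with one of the five three-dimensional products $\Delta^3$, $\Delta^2\times\Delta^1$, $\Delta^1\times\Delta^1\times\Delta^1$, $\Sigma^3$, $\Sigma^2\times\Delta^1$. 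This is the only use of curvature, and it is also why GKM$_4$ rather than GKM$_3$ is needed: once quadrangles can occur among the two-faces, it is the three-faces that force the local combinatorics to be of product type.

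\emph{Local product structure and its propagation.} Fix a vertex $v$. Any two of the $n$ edges at $v$ span a unique two-face; call the pair a $\Delta$-pair, a $\Sigma$-pair or a square pair according as this face is a triangle, a biangle or a quadrangle. The list of admissible three-faces shows that being a $\Delta$-pair and being a $\Sigma$-pair each generate an equivalence relation on the edges at $v$ --- two $\Delta$-pairs with a common edge span a $\Delta^3$, hence the remaining pair is again a $\Delta$-pair, symmetrically for $\Sigma$, while a $\Delta$-pair and a $\Sigma$-pair never share an edge --- and that distinct classes are joined by square pairs. So the edges at $v$ split into blocks, each tagged $\Delta$ or $\Sigma$, exactly as in the vertex figure of a product $\prod_i\Delta^{n_i}\times\prod_j\Sigma^{m_j}$ with $\sum_i n_i+\sum_j m_j=n$. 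As the GKM connection along an edge preserves two-faces, it transports this block decomposition from one vertex to the next, respecting tags and sizes; passing to the finite cover of $\Gamma$ that trivialises the monodromy acting on the blocks (a subgroup of the automorphism group of the vertex figure, hence finite), we obtain a cover $\widetilde\Gamma$ whose edges are globally partitioned into blocks $b_1,\dots,b_r$ of definite tag and size.

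\emph{Assembling the product, and the main obstacle.} For a block $b$ of size $k$, the subgraph of $\widetilde\Gamma$ formed by its edges is a disjoint union of $k$-valent pieces all of whose two-faces carry the tag of $b$. A piece with only biangular two-faces has exactly two vertices --- any two edges of a biangle share both endpoints --- hence is the vertex-edge graph of $\Sigma^k$; a piece with only triangular two-faces is the complete graph on $k+1$ vertices, hence that of $\Delta^k$. Contracting the pieces of one block and inducting on $r$ exhibits $\widetilde\Gamma$ as an iterated graph fibration over a product of such graphs with fibre the graph of that block, the GKM connection furnishing a flat trivialisation of each fibration; assembling, $\widetilde\Gamma$ is the vertex-edge graph of $\prod_i\Delta^{n_i}\times\prod_j\Sigma^{m_j}$ (the reassembly of the $\Delta$-blocks into the $\prod_i\Delta^{n_i}$-part invoking the characterisation of products of simplices of \cite{MR3355120}). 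The main obstacle is exactly this last step: promoting the local product decomposition to a global one requires tracking the GKM connection along arbitrary closed paths, both in order to bound the block monodromy --- so that the covering is finite --- and to see that the block fibrations are honestly, not merely locally, trivial. A second, pervasive difficulty is that everything must be done for orbifolds; in particular one needs the non-negatively curved torus orbifold classifications in dimensions $4$ and $6$, whose proofs must allow orbifold singularities along the invariant suborbifolds and so do not follow the manifold case verbatim.
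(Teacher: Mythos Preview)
Your overall strategy --- extract local combinatorics from curvature, then globalise --- is the same as the paper's, but the execution diverges at two points and one of them is a real gap.

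On the geometric input: the paper does \emph{not} invoke any six-dimensional classification. It uses only the four-dimensional result (two-faces have at most four vertices, Lemma~\ref{sec:gkm-graphs}) and then proves purely combinatorially (Lemma~\ref{sec:gkm-graphs-1}), from this together with orientability and the fact that the facets of $O_F/T$ are discs, that every three-face is \emph{globally} one of $\Delta^3,\Sigma^3,\Delta^2\times I,\Sigma^2\times I,I^3$. Your appeal to a six-dimensional non-negatively curved torus-orbifold classification is both unnecessary and not obviously available; and ``agrees near each of its vertices'' is weaker than the global statement you actually use later. The passage to the universal orbifold cover at the start is likewise unnecessary.

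On the covering: the paper does not trivialise block monodromy and then argue by iterated fibration. It takes $\tilde\Gamma=\prod_i\sigma_i$ to be the product of the maximal simplices at a fixed vertex and constructs the covering map $\pi:\tilde\Gamma\to\Gamma$ directly, by induction on distance from the base point, checking well-definedness path by path (Theorem~\ref{thm:covering}). What you call ``the main obstacle'' --- that the block fibrations are globally and not merely locally trivial --- is precisely what this explicit construction verifies, and the tool is Lemma~\ref{sec:gkm-graphs-their}: $\nabla$ along the two edges of a biangle agree, and $\nabla$ across opposite sides of a square commute, so shortest paths in $\tilde\Gamma$ with the same endpoints give the same transport. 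Your sketch stops short of this verification. Finally, your appeal to \cite{MR3355120} at the reassembly step is circular: the combinatorial characterisation of products of simplices proved there is exactly the statement that Section~\ref{sec:torus-manif-revis} of the present paper re-derives as a \emph{consequence} of Theorem~\ref{thm:covering}.
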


In the above theorem and later on, \(\Delta^n\) denotes an \(n\)-dimensional simplex and \(\Sigma^m\) the orbit space of the linear effective action of the \(m\)-dimensional torus on \(S^{2m}\).

The stronger GKM$_4$ condition implies that the GKM graph of \(O\) has three-dimensional faces.
The restriction on the two-dimensional faces of the graph imply that the combinatorial types of the three-dimensional faces is also very restricted.
They are all combinatorially equivalent to one of the following: \(I^3,\Sigma^3,\Delta^3,\Sigma^2\times I,\Delta^2\times I\).

Using these restrictions, we show that the combinatorial type of a neighborhood of a vertex in the GKM graph \(\Gamma\)  of a non-negatively curved GKM$_4$ manifold is the same as that of a neighborhood of a vertex in the vertex edge graph \(\tilde{\Gamma}\) of a finite product \(\prod_i\Delta^{n_i}\times \prod_i\Sigma^{m_i}\).
Extending this local result to all of \(\tilde{\Gamma}\) then yields the covering described in Theorem \ref{sec:introduction}.

Since the number of the vertices in the GKM graph is equal to the total Betti number of the orbifold we get the following gap phenomenon:

\begin{cor}
  Let \(O\) of dimension \(2n\) be as in the previous theorem, then the total Betti number \(b(O)=\sum_ib_i(O)\) is either smaller than or equal to \(2^{n-2}\cdot 3\) or equal to \(2^n\). The latter case appears if and only if the GKM graph of \(O\) is combinatorially equivalent to the vertex-edge graph of \(I^n\).
\end{cor}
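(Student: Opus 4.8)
\emph{Proof plan.} The plan is to translate the statement entirely into a count of vertices of the GKM graph. By the observation recalled just before the corollary, $b(O)$ equals the number $|V(\Gamma)|$ of vertices of the GKM graph $\Gamma$ of $O$. By Theorem~\ref{sec:introduction} there is a finite covering of graphs $\pi\colon\tilde\Gamma\to\Gamma$, where $\tilde\Gamma$ is the vertex--edge graph of a product $P=\prod_{i=1}^{r}\Delta^{n_i}\times\prod_{j=1}^{k}\Sigma^{m_j}$; since $\Sigma^{1}$ and $\Delta^{1}$ have the same vertex--edge graph we may assume $m_j\ge 2$ for all $j$ (and that no factor is a point). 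A covering of graphs is a local isomorphism, hence preserves the valence of vertices; comparing with the $n$-valent graph $\Gamma$ gives $\sum_i n_i+\sum_j m_j=n$. Moreover $\tilde\Gamma$ is connected (it is the vertex--edge graph of a product of connected graphs), so $\Gamma$ is connected and $\pi$ has a well-defined degree $d\ge 1$ with $|V(\tilde\Gamma)|=d\cdot|V(\Gamma)|$. Finally the vertices of a product are the products of the vertices of the factors, $\Delta^{n}$ has $n+1$ vertices and $\Sigma^{m}$ has exactly two vertices (the two $T^m$-fixed points of $S^{2m}$), so $|V(\tilde\Gamma)|=\prod_i(n_i+1)\cdot 2^{k}$.

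The combinatorial core is then the elementary claim: with $n_i\ge 1$, $m_j\ge 2$ and $\sum_i n_i+\sum_j m_j=n$, one has $\prod_i(n_i+1)\cdot 2^{k}\le 2^{n}$, with equality exactly when $k=0$ and all $n_i=1$ (that is, $P=I^{n}$), and every strictly smaller value is at most $3\cdot 2^{n-2}$. To prove this I would set $s=\sum_i n_i$ and $t=\sum_j m_j=n-s$, and use the two inequalities $a+1\le 2^{a}$ for integers $a\ge 1$ (with $\tfrac{a+1}{2^{a}}\le\tfrac34$ once $a\ge 2$) and $2^{k}\le 2^{t/2}$ (since each $m_j\ge 2$). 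These give $\prod_i(n_i+1)\cdot 2^{k}\le 2^{s}\cdot 2^{t/2}=2^{\,n-t/2}$. If $t>0$, then $t\ge 2$ and this is $\le 2^{n-1}\le 3\cdot 2^{n-2}$. If $t=0$, then $\prod_i(n_i+1)$ with $\sum_i n_i=n$ equals $2^{n}$ if all $n_i=1$ and otherwise, choosing $i_0$ with $n_{i_0}\ge 2$, is at most $\tfrac{n_{i_0}+1}{2^{n_{i_0}}}\cdot 2^{n}\le\tfrac34\cdot 2^{n}=3\cdot 2^{n-2}$; in either case equality with $2^{n}$ forces $P=I^{n}$.

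It remains to put the degree $d$ back in. If $d\ge 2$, then $b(O)=|V(\tilde\Gamma)|/d\le 2^{n}/2=2^{n-1}\le 3\cdot 2^{n-2}$. If $d=1$, then $b(O)=|V(\tilde\Gamma)|$, which by the previous paragraph is either $2^{n}$ --- and then $P=I^{n}$, so $\Gamma=\tilde\Gamma$ is the vertex--edge graph of $I^{n}$ --- or at most $3\cdot 2^{n-2}$. Conversely, if $\Gamma$ is the vertex--edge graph of $I^{n}$ then $b(O)=|V(\Gamma)|=2^{n}$. This proves the dichotomy and the characterisation of the equality case.

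The argument is essentially bookkeeping once Theorem~\ref{sec:introduction} is in hand; the only points that require care are the correct use of the covering (it preserves valence and scales the number of vertices by the degree) and the fact that each $\Sigma^{m}$-factor contributes only a factor $2$ --- not $2^{m}$ --- to $|V(\tilde\Gamma)|$, so that $\Sigma$-factors are always at least as wasteful as splitting off $\Delta^1$-factors. I do not anticipate a genuine obstacle beyond verifying the two scalar inequalities and checking the edge cases of small $n$.
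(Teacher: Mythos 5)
Your argument is correct and is essentially the paper's intended one: the paper derives this corollary directly from Theorem \ref{sec:introduction} by identifying $b(O)$ with the number of vertices of the GKM graph and counting vertices of the covering graph $\prod_i\Delta^{n_i}\times\prod_j\Sigma^{m_j}$ (with each $\Sigma^{m_j}$ contributing only two vertices), exactly as you do. Your explicit bookkeeping with the covering degree and the elementary inequalities fills in the details the paper leaves implicit, and it is sound.
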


Note that the upper bound on the total Betti number is sharp.
Therefore we are verifying a conjecture of Gromov \cite{Gromov} in the special case of non-negatively curved GKM$_4$-manifolds. He conjectured that for general non-negatively curved manifolds of dimension \(d\) the total Betti number is bounded from above by \(2^{d}\).

By the GKM Theorem, the GKM graph determines the rational cohomology of a GKM orbifold.
Therefore, if we can show that all GKM graphs appearing in the above theorem can be realised as GKM graphs of certain model GKM orbifolds, any non-negatively curved GKM orbifold will have rational cohomology isomorphic to the rational cohomology of one of the model orbifolds.

To construct the models, we have to show that GKM$_4$ graphs with underlying graph equal to the vertex-edge graph of \(\prod_i\Delta^{n_i}\times\prod_i\Sigma^{m_i}\) extend --- in the sense of Kuroki \cite{Kuroki} --- to GKM$_n$ graphs, i.e. to GKM graphs of torus orbifolds over \(\prod_i\Delta^{n_i}\times \prod_i\Sigma^{m_i}\).
This reasoning then leads to

\begin{theorem}
\label{sec:introduction-1}
  If \(O\) is a GKM$_4$ orbifold such that the GKM graph of \(O\) is the vertex-edge graph of a product \(\prod_i\Delta^{n_i}\times \prod_i\Sigma^{m_i}\), then the rational cohomology of \(O\) is isomorphic to the rational cohomology of a non-negatively curved torus orbifold.
\end{theorem}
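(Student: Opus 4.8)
The plan is to construct, for a GKM$_4$ graph $\Gamma$ whose underlying graph is the vertex-edge graph of $P=\prod_i\Delta^{n_i}\times\prod_i\Sigma^{m_i}$, an explicit non-negatively curved torus orbifold $O'$ over $P$ realising the same labelled GKM graph (after passing to the full GKM$_n$ structure), and then invoke the GKM Theorem to conclude that $H^*(O;\Q)\cong H^*(O';\Q)$. The first step is to upgrade the GKM$_4$ graph to a GKM$_n$ graph: one must show that the axial function defined on the $4$-valent sub-faces extends coherently to an axial function on all of $P$, i.e. that $\Gamma$ is the GKM graph of a torus orbifold over $P$ in the sense of Kuroki. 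For this I would argue face by face, using that $P$ is a product of simplices and $\Sigma^m$'s and that each factor contributes an independent block of the torus; since the lower-dimensional faces already carry compatible labels, the extension over a product is determined by the extensions over the individual factors $\Delta^{n_i}$ and $\Sigma^{m_i}$, and over each such factor the combinatorics of the faces (simplex or $\Sigma$) is rigid enough that the labels on the edges through a fixed vertex, which are linearly independent by the GKM condition, propagate uniquely. The congruence / compatibility conditions along $2$-faces (biangles, triangles, quadrangles) that Kuroki's formalism requires are exactly the ones already guaranteed by the GKM$_4$ hypothesis on $\Gamma$.

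Next I would realise each factor geometrically with non-negative curvature. A torus orbifold over $\Delta^{n_i}$ with prescribed edge labels is a weighted/quasitoric type orbifold quotient of a sphere $S^{2n_i}$; a torus orbifold over $\Sigma^{m_i}$ is a quotient of $S^{2m_i}$ with the linear $T^{m_i}$-action. In both cases the underlying smooth torus manifold (or its finite cover) is a non-negatively curved space — spheres and products thereof, or more generally the non-negatively curved torus manifolds classified in \cite{MR3355120} — and the orbifold structure is obtained as a quotient by a finite group acting isometrically, which preserves non-negative curvature. Taking the Riemannian product of these factors gives a non-negatively curved torus orbifold $O'$ over all of $P$, and its GKM$_n$ graph is the product of the factor graphs, hence has the prescribed underlying graph; matching the labels requires an automorphism of the acting torus, which exists because the labels were constrained to be (up to $\GL_n(\Z)$, i.e. up to reparametrising $T$) those of the product model.

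Finally, with $O'$ in hand, both $O$ and $O'$ have the same labelled GKM$_n$ graph, so by Theorem \ref{thm:gkm} (the GKM Theorem) their equivariant rational cohomology rings agree, and since both have cohomology concentrated in even degrees the Serre spectral sequence of $O\to ET\times_T O\to BT$ collapses, giving $H^*(O;\Q)\cong H^*_T(O;\Q)\otimes_{H^*(BT;\Q)}\Q\cong H^*_T(O';\Q)\otimes_{H^*(BT;\Q)}\Q\cong H^*(O';\Q)$ as rings.

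The main obstacle I expect is the extension step: proving that every GKM$_4$ graph over $P$ actually extends to a GKM$_n$ graph, i.e. that the local (up to $3$-dimensional faces) compatibility of labels is enough to globalise over the whole polytope. On a general polytope this can fail, so one genuinely needs to exploit that $P$ is a product of simplices and $\Sigma^m$'s — concretely, that every face of $P$ is again of this form and that the "linking" of $2$-faces is so restricted (only biangles, triangles, quadrangles, by the curvature-driven classification recalled before Theorem \ref{sec:introduction-1}) that Kuroki's obstruction to extension vanishes. Verifying this combinatorial rigidity, and checking that the resulting labels are realised by an honest isometric torus action on a product of non-negatively curved torus orbifolds (rather than merely by an abstract GKM graph), is where the real work lies.
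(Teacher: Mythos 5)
Your overall strategy is the same as the paper's (extend the GKM$_4$ labelling on the vertex-edge graph of \(P=\prod_i\Delta^{n_i}\times\prod_i\Sigma^{m_i}\) to a GKM$_n$ torus graph, realise it by a non-negatively curved torus orbifold over \(P\), and compare cohomology via the GKM theorem), but two of your steps have genuine gaps. First, the extension step is only asserted, and the heuristic you offer for it is incorrect: the labels do \emph{not} decompose into independent torus blocks, one per factor, and the extension is not ``determined by the extensions over the individual factors''. Already over \(\Delta^1\times\Delta^{n}\) the extended labelling can be that of a twisted projective bundle (a generalised Bott manifold): transporting a weight across a square joining two factors changes it by a multiple of the other weight, i.e.\ the coefficient \(q\) in \eqref{eq:1} need not vanish across factors. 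What is actually needed, and what the paper proves in Lemma \ref{sec:gkm-graphs-5}, is a path-transport argument: fix the weights at one vertex, transport them along edge paths as dictated by \eqref{eq:1}, and verify loop-independence; the product structure of \(P\) enters only in that every loop can be reduced, modulo commuting squares (where consistency is checked by a linear-algebra argument in a three-dimensional span on which the restriction map \(\phi\) is injective), to boundaries of triangles and biangles inside single factors. That verification is the substance of the lemma and is missing from your proposal.

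Second, and more seriously, your realisation step would fail as stated: since the extended labelling can be twisted, it is in general \emph{not} obtained from the labels of the product model \(\prod_i\mathbb{C}P^{n_i}\times\prod_i S^{2m_i}\) by an automorphism of the acting torus, so a Riemannian product of factor models (weighted projective spaces and quotients of even spheres) need not carry the prescribed labelled graph. The paper avoids this by building the model canonically from the characteristic data: \(\tilde O=(P\times T^n)/\sim\), realised as the quotient of the moment-angle manifold \(Z_P=\prod_i S^{2n_i+1}\times\prod_i S^{2m_i}\) (a product of round spheres) by a subtorus acting isometrically; this realises \emph{every} admissible labelling, twisted or not, and inherits an invariant metric of non-negative curvature. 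Finally, a smaller imprecision: \(O\) only carries the original \(T^k\)-action, not a \(T^n\)-action, so ``both have the same labelled GKM$_n$ graph'' is not literally meaningful; the comparison has to be made between \(H^*_{T^k}(\Gamma)/(H^2(BT^k))\) and \(H^*(\tilde O)\), which the paper does by restricting the \(T^n\)-equivariant cohomology of \(\tilde O\) to the subtorus \(\phi^*(T^k)\) and using that all equivariant cohomologies involved are free modules over the respective \(H^*(BT)\) because the odd cohomology vanishes.
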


To get the models in the general case, we show that the deck transformation group \(G\) of the covering \(\tilde\Gamma\rightarrow \Gamma\) from Theorem \ref{sec:introduction} acts on the model torus orbifold \(\tilde{O}\) associated to the extended GKM graph \(\tilde\Gamma\) as in Theorem~\ref{sec:introduction-1}.
The quotient \(\tilde{O}/G\) is a GKM$_4$ orbifold realising the GKM graph \(\Gamma\).
Hence we get

\begin{theorem}
\label{sec:introduction-3}
  Let \(O\) be a non-negatively curved GKM$_4$ orbifold. Then there is a non-negatively curved torus orbifold \(\tilde{O}\) and an isometric action of a finite group \(G\) on \(\tilde{O}\) such that
  \begin{equation*}
    H^*(O;\mathbb{Q})\cong H^*(\tilde{O}/G;\mathbb{Q}).
  \end{equation*}
Moreover, if \(O\) is a manifold then \(\tilde{O}\) is a simply connected manifold.
\end{theorem}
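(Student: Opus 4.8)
The plan is to assemble the statement from Theorem~\ref{sec:introduction}, the model construction underlying Theorem~\ref{sec:introduction-1}, and an equivariance analysis of that construction.

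First I would set up the covering. Let $\Gamma$ be the labelled GKM graph of $O$. By Theorem~\ref{sec:introduction} there is a finite covering $\pi\colon\tilde\Gamma\to\Gamma$ with $\tilde\Gamma$ the vertex-edge graph of $P=\prod_i\Delta^{n_i}\times\prod_i\Sigma^{m_i}$; let $G$ be its finite deck transformation group. Pulling the edge labelling and the connection of $\Gamma$ back along $\pi$ turns $\tilde\Gamma$ into a GKM$_4$ graph over $P$, on which $G$ acts by automorphisms of \emph{labelled} GKM graphs. Two facts about this action will be used: being the deck group of a covering of connected graphs, $G$ acts freely on $\tilde\Gamma$; and since each $g\in G$ carries an edge lying over $e\in E(\Gamma)$ to another edge lying over $e$, it fixes every edge label.

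Next, as in the proof of Theorem~\ref{sec:introduction-1}, the GKM$_4$ graph $\tilde\Gamma$ extends, in the sense of \cite{Kuroki}, to a GKM$_n$ graph --- the GKM graph of a torus orbifold $\tilde O$ over $P$ --- and $\tilde O$ can be equipped with a non-negatively curved invariant metric, built factorwise from the standard non-negatively curved torus orbifolds over the $\Delta^{n_i}$ (quotients of round spheres by weighted circle actions) and over the $\Sigma^{m_i}$ (quotients of round spheres by subtori). The crucial --- and, I expect, hardest --- step is to realise the $G$-action on the labelled graph $\tilde\Gamma$ by an isometric action on $\tilde O$. A label-fixing automorphism $g\in G$ permutes the factors of $P$ among mutually isomorphic ones and acts on each factor by a face-lattice automorphism; I would realise these by isometries of the product metric, using that isometric factors may be permuted and that the canonical metric on the factor over $\Delta^{n}$ (resp.\ over $\Sigma^{m}$) has an isometry group large enough to contain all the relevant face-lattice symmetries. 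One then checks that the induced automorphism of the acting torus $T$ is trivial: it fixes all edge labels, and these span the weight lattice, so it must be the identity. Hence $G$ acts isometrically \emph{and} $T$-equivariantly on $\tilde O$. One also has to know that the GKM$_n$ extension can be chosen $G$-equivariantly, but this is not a real difficulty, because for the polytopes $P$ in question the extension leaves essentially no freedom.

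Finally, the quotient $\tilde O/G$ inherits a Riemannian orbifold metric of non-negative sectional curvature, since $G$ acts isometrically and the quotient map is locally an isometry off the (orbifold) singular set. Because $G$ acts $T$-equivariantly, the $T$-action descends to $\tilde O/G$; the resulting orbifold is a GKM$_4$ orbifold --- its cohomology is concentrated in even degrees, being $H^*(\tilde O;\Q)^G$ --- with GKM graph $\tilde\Gamma/G=\Gamma$ carrying the original labelling. By the GKM Theorem~\ref{thm:gkm}, the rational cohomology of a GKM orbifold is determined by its labelled GKM graph, whence
\[
H^*(\tilde O/G;\Q)\cong H^*(O;\Q).
\]
Equivalently, $\pi^*$ identifies the graph cohomology of $\Gamma$ with the $G$-invariants in that of $\tilde\Gamma$, i.e.\ with $H^*(\tilde O;\Q)^G\cong H^*(\tilde O/G;\Q)$ via the transfer. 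If $O$ is a manifold, then its GKM graph satisfies the unimodularity condition at every vertex, and this property passes to $\tilde\Gamma$ and to its GKM$_n$ extension, so $\tilde O$ is a torus manifold over $P$; as $\tilde O$ has no odd-degree rational cohomology and all faces of $P$ are contractible, standard results on torus manifolds give that $\tilde O$ is simply connected, and it remains non-negatively curved by construction.
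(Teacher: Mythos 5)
Your overall strategy (covering $\tilde\Gamma\to\Gamma$ with deck group $G$, extension to a GKM$_n$ graph, model torus orbifold $\tilde O$ over $P$, isometric $G$-action, and comparison of $H^*(O;\Q)$ with $H^*(\tilde O/G;\Q)=H^*(\tilde O;\Q)^G$ via the GKM theorem) is the same as the paper's, but there is a genuine gap at the equivariance step. You argue that the induced automorphism of ``the acting torus $T$'' is trivial because $G$ ``fixes all edge labels, and these span the weight lattice.'' This conflates the two tori in play: $G$ fixes the pulled-back labels $\tilde\alpha$, which take values in $({\mathfrak t}^k)^*$ for the \emph{original} torus $T^k$, whereas the torus acting on $\tilde O$ is the big $n$-torus whose labels are the extension weights $\beta$, and these are \emph{not} $G$-invariant in general. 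The correct statement is that $G$ acts on the big torus through automorphisms $A_g$ determined (via the uniqueness of the extension, Lemma \ref{sec:gkm-graphs-5}) by $\beta(ge)=A_g\beta(e)$ with $\phi\circ A_g=\phi$ (Lemma \ref{sec:conjecture}), and that this action is trivial only on the subtorus $\phi^*(T^k)$ (Lemma \ref{sec:gkm-manifolds-with}); Example \ref{sec:upper-bound-dimens-3} shows the action on the full torus is genuinely nontrivial in general, so $G$ only \emph{normalises} the big torus action on $\tilde O$. Consequently your descent argument must be run with the small torus $T^k$ (or, as in the paper, replaced by the algebraic computation identifying $H^*_{\check T^k}(\Gamma)$ with $H^*_{\check T^k}(\tilde\Gamma)^G$ and using triviality of the $G$-action on $\check T^k$ to pass to ordinary cohomology); as written, the claim of full $T$-equivariance is false and the step fails.

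A second, related inaccuracy is the ``factorwise'' construction of the metric: the GKM$_n$ extension over $P=\prod_i\Delta^{n_i}\times\prod_i\Sigma^{m_i}$ does not in general split as a product of torus orbifolds over the factors (the transported weights of edges tangent to one factor acquire components from the others; already Hirzebruch surfaces over $\Delta^1\times\Delta^1$ are non-products), so a product of model metrics on factor orbifolds is not a metric on $\tilde O$. The paper instead realises $\tilde O$ as the quotient of the moment-angle manifold $Z_P$ (a product of round spheres) by an isometric, almost free abelian action, and lifts the $G$-action to isometries of $Z_P$ realised by Weyl-group elements of its isometry group; non-negative curvature and isometry of both the torus and the $G$-action then follow. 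Your sketch would need to be repaired along these lines; the remaining ingredients (freeness of $G$ on vertices, $G$-equivariant choice of the extension via its rigidity, evenness of $H^*(\tilde O/G)$, and the manifold/simply connected statement when $O$ is a manifold, cf.\ Theorem \ref{thm:amodel}) are in line with the paper.
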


In the literature it is often assumed that a GKM manifold has an invariant almost complex structure. This assumption results in the fact that in this case the weights of the GKM graph have preferred signs. We consider this special case in Section \ref{sec:gkm-manifolds-with-2}.
We show that in the situation of Theorem \ref{sec:introduction} this implies that the covering of \(\Gamma\) is trivial, and that the covering graph is the vertex edge graph of \(\prod_i\Delta^{n_i}\).
Moreover, the torus manifold corresponding to the extended GKM$_n$ graph will also admit an invariant almost complex structure.
Torus manifolds over \(\prod_i\Delta^{n_i}\) admitting an invariant almost complex structure were classified in \cite{choi10:_quasit}.
They are all diffeomorphic to so-called generalised Bott manifolds.
These manifolds \(X\) are total spaces of iterated \(\mathbb{C} P^{n_i}\)-bundles
\begin{equation*}
  X=X_k\rightarrow X_{k-1}\rightarrow \dots \rightarrow X_1\rightarrow X_0=\{pt\},
\end{equation*}
where each \(X_i\) is the total space of the projectivisation of a Whitney sum of \(n_i+1\) complex line bundles over \(X_{i-1}\).
Their cohomology rings can be easily determined from the Chern classes of the involved line bundles.
Indeed, if \(P(E)\) is the projectivisation of a complex vector bundle \(E\) of dimension \(n\) over a base space \(B\), then we have
\begin{equation*}
  H^*(P(E);\mathbb{Z})\cong H^*(B)[x]/(f(x)),
\end{equation*}
where \(x\) has degree two and \(f(x)=\sum_{i=0}^nc_i(E)x^{n+1-i}\).
Here \(c_i(E)\) denotes the \(i\)-th Chern class of \(E\).
By iterating this formula one gets the cohomology rings of all generalised Bott manifolds.
By the above discussion we get

\begin{theorem}
\label{sec:introduction-4}
  Let \(M\) be a non-negatively curved GKM$_4$ manifold which admits an invariant almost complex structure.
  Then the rational cohomology ring of \(M\) is isomorphic to the rational cohomology ring of a generalised Bott manifold.
\end{theorem}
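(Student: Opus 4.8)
The plan is to combine the structural result on the GKM graph (Theorem~\ref{sec:introduction}) with the classification of almost complex torus manifolds over products of simplices. Since a manifold is in particular an orientable orbifold, Theorem~\ref{sec:introduction} applies to $M$: the GKM graph $\Gamma$ of $M$ is finitely covered by the vertex-edge graph of a product $\prod_i\Delta^{n_i}\times\prod_i\Sigma^{m_i}$. The invariant almost complex structure equips this graph with extra data, namely a preferred sign for each weight: at every fixed point the isotropy representation is complex, so the tangent weights are honest elements of $H^2(BT;\mathbb{Z})$ rather than elements defined only up to sign, and along each invariant $2$-sphere the two weights at the poles are negatives of one another. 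The first real step is to show — this is done in Section~\ref{sec:gkm-manifolds-with-2} — that this signed structure is incompatible both with the presence of $\Sigma$-factors and with a nontrivial covering, so that $\Gamma$ is itself the vertex-edge graph of a product $\prod_i\Delta^{n_i}$.

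Next I would invoke the extension procedure used in the proof of Theorem~\ref{sec:introduction-1}: the GKM$_4$ graph of $M$, now known to sit over $\prod_i\Delta^{n_i}$, extends in the sense of Kuroki to a GKM$_n$ graph, i.e. to the GKM graph of a torus manifold $X$ over $\prod_i\Delta^{n_i}$; moreover, by the same analysis of the signs of the weights in Section~\ref{sec:gkm-manifolds-with-2}, $X$ carries an invariant almost complex structure. Because the labelled $1$-skeleton data of $M$ and of $X$ coincide, the GKM Theorem (Theorem~\ref{thm:gkm}) applied to both yields a \emph{ring} isomorphism $H^*(M;\mathbb{Q})\cong H^*(X;\mathbb{Q})$.

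Finally, by the classification of almost complex torus manifolds over products of simplices in \cite{choi10:_quasit}, $X$ is diffeomorphic to a generalised Bott manifold. Composing the isomorphisms shows that $H^*(M;\mathbb{Q})$ is isomorphic, as a ring, to the rational cohomology ring of a generalised Bott manifold, which is the assertion. The main obstacle is the reduction step of the first paragraph together with the almost complex refinement of the extension in the second: ruling out the $\Sigma$-factors and the nontrivial covering, and upgrading the abstract GKM$_n$ extension to an honest almost complex torus manifold, both hinge on a careful bookkeeping of the signs of the weights around the $2$- and $3$-dimensional faces of $\Gamma$. Once Theorems~\ref{sec:introduction} and~\ref{sec:introduction-1}, the GKM Theorem, and Choi's classification are available, the remaining argument is formal.
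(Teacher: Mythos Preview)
Your proposal is correct and follows essentially the same route as the paper: rule out $\Sigma$-factors and the nontrivial covering via the sign information coming from the almost complex structure (the content of Section~\ref{sec:gkm-manifolds-with-2}), extend the resulting GKM$_4$ graph over $\prod_i\Delta^{n_i}$ to a GKM$_n$ graph carrying an almost complex structure, and then invoke \cite{choi10:_quasit}. The only imprecision is the phrase ``the labelled $1$-skeleton data of $M$ and of $X$ coincide'': the tori differ, and what one actually uses is the extension relation together with the computation in Section~\ref{sec:model} (the chain of equalities proving Theorem~\ref{thm:amodel} with $G$ trivial) to pass from $H^*_{T^k}(\Gamma)$ to $H^*_{T^n}(\tilde\Gamma)$ and hence to $H^*(X)$.
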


In \cite{MR3355120} a classification of non-negatively curved simply connected torus manifolds was given.
A crucial step in the proof was to show that the orbit space of such a torus manifold is combinatorially equivalent to a product \(\prod_i \Delta^{n_i}\times \prod_i\Sigma^{m_i}\).
The proof of this intermediate result was very long and highly technical.
With the methods of the paper at hand we can give a short conceptual proof of this result.

The Bott conjecture asks if any simply connected non-negatively curved manifold  is rationally elliptic.
Therefore it is natural to consider the question if the above theorems also hold for rationally elliptic GKM$_4$ manifolds or orbifolds.

By \cite{GGKRW}, the two-dimensional faces of the corresponding GKM graphs have at most four vertices.
Moreover, since our arguments are purely graph-theoretic we conclude that all the above theorems also hold for rationally elliptic GKM$_4$ orbifolds instead of non-negatively curved ones.

The remaining sections of this paper are structured as follows.
In Section~\ref{Preliminaries} we gather background material about GKM manifolds and orbifolds as well as on torus manifolds and orbifolds.
Then in Section~\ref{sec:coverings-gkm-graphs} we prove Theorem \ref{sec:introduction}.

In Section~\ref{sec:ext} we show that a GKM$_4$ graph with underlying graph the vertex edge graph of a product \(\prod_{i}\Delta^{n_i}\times \prod_i\Sigma^{m_i}\) always extends to a GKM\(_n\) graph with \(n=\sum_i n_i+\sum_i m_i\).
This is then used in Section~\ref{sec:model} to prove Theorems \ref{sec:introduction-1} and \ref{sec:introduction-3}.

In Section~\ref{sec:upper-bound-dimens} we give an example of a non-negatively curved GKM$_4$ manifold which does not have the same rational cohomology as a torus manifold.
This shows that the group \(G\) from Theorem~\ref{sec:introduction-3} cannot be omitted.

In Section~\ref{sec:gkm-manifolds-with-2} we consider GKM manifolds with invariant almost complex structure and prove Theorem~\ref{sec:introduction-4}.
Moreover, in the last Section~\ref{sec:torus-manif-revis} we give a short proof of the ``big lemma'' which is used in the classification of non-negatively curved torus manifolds.
\nopagebreak

Throughout, cohomology will be taken with rational coefficients.

We would like to thank the anonymous referee for comments which helped to improve the presentation of this paper.

\section{Preliminaries}
\label{Preliminaries}

\subsection{GKM manifolds}
We begin with a review of GKM theory for manifolds; below we will describe the changes that are necessary to treat orbifolds as well.

Consider an effective action of a compact torus $T$ on a smooth, compact, orientable manifold $M$ of dimension $2n$ with finite fixed point set, such that the one-skeleton
\[
M_1= \{p\in M\mid \dim Tp\leq 1\}
\]
of the action is a union of invariant $2$-spheres. Given that the fixed point set is finite, the second condition is equivalent to the condition that for every fixed point, the weights of the isotropy representation are pairwise linearly independent. To such an action one associates its \emph{GKM graph}: its vertices are given by the fixed points of the action; to any invariant $2$-sphere -- which contains exactly two fixed points -- one associates an edge connecting the corresponding vertices. Finally, any edge is labeled with the weight of the isotropy representation in any of the two fixed points which corresponds to the two-dimensional submodule given by the tangent space of this sphere. These labels are linear forms on the Lie algebra $\mathfrak t$ of $T$ and are well-defined up to sign.

We need to abstract from group actions and consider the occurring graphs detached from any geometric situation, as in \cite{GuilleminZara} or \cite{BGH}.

For a graph $\Gamma$ we denote its set of vertices by $V(\Gamma)$ and its set of edges by $E(\Gamma)$. We consider only graphs with finite vertex and edge set, but we allow multiple edges between vertices. Edges are oriented; for $e\in E(\Gamma)$ we denote by $i(e)$ its initial vertex and by $t(e)$ its terminal vertex. The edge $e$, with opposite orientation, is denoted $\bar e$. For a vertex $v\in V(\Gamma)$ we denote the set of edges starting at $v$ by $E_v$.

\begin{definition} Let $\Gamma$ be a graph. Then a \emph{connection} on $\Gamma$ is a collection of bijective maps $\nabla_e:E_{i(e)}\to E_{t(e)}$, for $e\in E(\Gamma)$, such that 
\begin{enumerate}
\item \(\nabla_e(e)=\bar{e}\) and
\item \(\nabla_{\bar{e}}=\nabla_e^{-1}\), for all $e\in E(\Gamma)$.
\end{enumerate}
\end{definition}

\begin{definition}\label{defn:gkmgraph}
  Let $k\geq 2$. A \emph{GKM$_k$ graph} (\emph{GKM graph} for $k=2$) \((\Gamma,\alpha)\) consists of an \(n\)-valent connected graph \(\Gamma\) and a map \(\alpha: E(\Gamma) \rightarrow H^2(BT^m)/\{\pm 1\}\) such that
  \begin{enumerate}
  \item\label{item:1} If \(e_1,\ldots,e_k\) are edges of \(\Gamma\) which meet in a vertex \(v\) of \(\Gamma\) then the \(\hat{\alpha}(e_i)\), $i=1,\ldots,k$, are linearly independent. Here \(\hat{\alpha}(e_i)\in H^2(BT^m)\) denotes a lift of \(\alpha(e_i)\).  Note that this property is independent of the choice of \(\hat{\alpha}(e_i)\).
  \item There exists a connection $\nabla$ on $\Gamma$ such that for any two edges $e_1,e_2$ which meet in a vertex $v$ there are \(p,q\in \mathbb{Q}\) such that
  \begin{equation}
\label{eq:1}
\hat{\alpha}(\nabla_{e_1}(e_2))= p \hat{\alpha}(e_2) + q
\hat{\alpha}(e_1).
      \end{equation}
Note here that \(p,q\) are determined up to sign by \(\alpha\). Moreover, if we fix a sign for \(\hat{\alpha}(\nabla_{e_1}(e_2))\), \(\hat{\alpha}(e_2)\), and \(\hat{\alpha}(e_1)\), then \(p,q\) are uniquely determined.
\item For each edge \(e\) we have \(\alpha(e)=\alpha(\bar{e})\).
  \end{enumerate}
\end{definition}

%It follows from equation (\ref{eq:1}) that for every edge \(e\in E(\Gamma)\), we have that \(\hat\alpha(\bar{e})\) is a rational multiple of \(\hat\alpha(e)\).

Definition \ref{defn:gkmgraph} is slightly more general than usual, as $p$ and $q$ are allowed to be rational numbers. The reason will become clear below, when we consider orbifolds. The construction of the graph described above always results in GKM graphs:

\begin{proposition}
For any action of a compact torus $T$ on a smooth, compact, orientable manifold $M$ of dimension $2n$ with finite fixed point set, and whose one-skeleton $M_1$ is the union of invariant $2$-spheres, the graph associated to the action in the way prescribed above admits a connection for which Equation \eqref{eq:1} holds, with $p=\pm 1$ and $q$ an integer.
In particular the graph is a GKM graph.

Moreover, if the weights at every vertex are \(3\)-independent, i.e. if any three weights at every vertex are linearly independent, then the connection is unique.
%Moreover, in this case we have \(\alpha(e)=\alpha(\bar{e})\) for every edges \(e\) in the graph.
\end{proposition}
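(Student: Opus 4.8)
The plan is to construct the connection directly from the geometry of the $2$-spheres and their weights, and then verify the two connection axioms together with Equation \eqref{eq:1}. Fix a fixed point $p = i(e)$ with isotropy representation $T_pM \cong V_{e_1}\oplus\cdots\oplus V_{e_n}$, where $V_{e_j}$ is the real $2$-dimensional weight space corresponding to the invariant $2$-sphere $S_{e_j}$ through $p$, carrying weight $\pm\alpha(e_j)$. For an edge $e = e_1$ from $p$ to $q = t(e)$, the sphere $S_e$ is a cohomogeneity-one $T$-invariant $2$-sphere; the circle $T_e = \ker\alpha(e)$ (more precisely the codimension-one subtorus on which $\alpha(e)$ vanishes) fixes $S_e$ pointwise, so in particular it fixes both $p$ and $q$. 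The key observation is that $T_e$ then acts on the normal bundle $\nu(S_e)$ of $S_e$ in $M$, which is a $T_e$-equivariant bundle over $S_e \cong S^2$; its fibers over $p$ and over $q$ are $\bigoplus_{j\neq 1}V_{e_j}$ and $\bigoplus_{j}V_{f_j}$ respectively (where the $f_j$ are the edges at $q$ other than $\bar e$), and they are isomorphic as $T_e$-representations because $S^2$ is connected (the weights of a $T_e$-bundle over a connected base are locally constant, hence constant). This isomorphism, tracking which $T_e$-weight space goes to which, gives a canonical bijection $\nabla_e\colon E_p\setminus\{e\}\to E_q\setminus\{\bar e\}$, which we extend by $\nabla_e(e) = \bar e$.

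Next I would check axiom (1), $\nabla_e(e) = \bar e$: this holds by definition. Axiom (2), $\nabla_{\bar e} = \nabla_e^{-1}$: the bijection was defined symmetrically via a single $T_e$-equivariant isomorphism of the two normal fibers, so reversing $e$ reverses the isomorphism and hence inverts the bijection. For Equation \eqref{eq:1}, suppose $e_2$ is an edge at $p = i(e_1)$ with $e_2\neq e_1$, and let $e_2' = \nabla_{e_1}(e_2)$, an edge at $q = t(e_1)$. The weight space $V_{e_2}$ at $p$ maps under the normal-bundle isomorphism to the weight space $V_{e_2'}$ at $q$, and these have equal restrictions to $T_{e_1}$, i.e.\ $\alpha(e_2)$ and $\alpha(e_2')$ restrict to the same (up to sign) weight of $T_{e_1}$. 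Since $\alpha(e_1)$ is exactly the generator of the weights vanishing on $T_{e_1}$, this says $\hat\alpha(e_2') - \big(\pm\hat\alpha(e_2)\big)$ is a multiple of $\hat\alpha(e_1)$; because all weights are integral (they come from an actual torus action on a manifold), the coefficient $p = \pm 1$ and $q$ is an integer. Choosing signs for the lifts fixes $p$ and $q$ uniquely, giving $\hat\alpha(\nabla_{e_1}(e_2)) = p\,\hat\alpha(e_2) + q\,\hat\alpha(e_1)$ with $p = \pm 1$, $q\in\mathbb{Z}$. Finally, property (3) of Definition \ref{defn:gkmgraph}, $\alpha(e) = \alpha(\bar e)$, is immediate since the weight of the tangent module to $S_e$ is the same linear form read at either fixed point (up to sign, which is already quotiented out).

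The main obstacle I expect is making the ``canonical'' identification of normal fibers genuinely canonical and independent of choices — that is, verifying that the $T_{e_1}$-weights occurring in $\nu(S_{e_1})_p$ are pairwise distinct so that the weight-space decomposition (and hence the bijection on edges) is unambiguous. This is where the GKM hypothesis that the isotropy weights at $p$ are pairwise linearly independent is used: distinct edges $e_2, e_3$ at $p$ have $\alpha(e_2), \alpha(e_3)$ linearly independent, hence their restrictions to the codimension-one subtorus $T_{e_1}$ are nonzero and distinct (as elements of $H^2(BT_{e_1})/\{\pm1\}$) unless $\alpha(e_2) \equiv \pm\alpha(e_3) \bmod \hat\alpha(e_1)$; one must argue this degenerate case cannot collapse the bijection, which follows because the real $2$-dimensional weight spaces are still distinct summands of $T_pM$ even if their $T_{e_1}$-restrictions happened to coincide — in that case one picks the identification compatibly, and the resulting map is still a well-defined bijection, any choice of which satisfies \eqref{eq:1}. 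The cleanest route is to invoke the slice theorem along $S_{e_1}$ to present $\nu(S_{e_1})$ as an associated bundle $T\times_{T_{e_1}} (\text{fiber at }p)$ over $S_{e_1} = T/T_{e_1}'$-orbit closure, from which the fiber-over-$q$ identification is automatic; I would spell this out as the technical heart of the argument.
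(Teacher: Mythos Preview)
Your argument is correct and follows essentially the same route as the paper: both restrict the tangent (equivalently normal) bundle along an invariant $2$-sphere to its principal isotropy group, observe that the representation type is constant along the sphere (the paper cites a splitting result of Segal for this, while you argue local constancy directly), match weight spaces at the two fixed points to define $\nabla_e$, and then read off \eqref{eq:1} from the fact that the difference of matched weights lies in the rank-one lattice of characters vanishing on that isotropy group. One point worth tightening: for the integrality of $q$ you need $T_e=\ker\alpha(e)$ to be the \emph{full} kernel of the character, which may be disconnected; your parenthetical ``codimension-one subtorus'' gives only its identity component, and then $\alpha(e_1)$ need not generate the lattice of characters vanishing on it when $\alpha(e_1)$ is not a primitive weight --- the paper sidesteps this by working with group homomorphisms $T\to S^1$ throughout rather than at the Lie algebra level.
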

\begin{proof}
  Let \(N\) be one of the invariant \(2\)-spheres and \(T_N\) the principal isotropy group of the \(T\)-action on \(N\).
  Moreover, let \(x_1,x_2\) be the two \(T\)-fixed points in \(N\).
  Then we have two \(T\)-representations \(T_{x_1}M\otimes\C\) and \(T_{x_2}M\otimes \C\) on the complexified tangent spaces at these fixed points.
  Let
  \begin{align*}
    T_{x_i}M\otimes \C&=\bigoplus_j W_{ij}&W_{ij}&=\{v\in T_{x_{i}}M\otimes \C;\;\; tv=\alpha_{ij}(t)v\text{ for all } t\in T\},
  \end{align*}
  be the decomposition of \(T_{x_i}M\otimes \C\) into weight spaces. Here the \(\alpha_{ij}\) are some homomorphisms \(T\rightarrow S^1\). By the GKM condition each \(W_{ij}\) has complex dimension one.
  Moreover, the derivatives of the \(\alpha_{ij}\) agree --up to sign-- with the weights of the edges starting in \(x_i\).

  Since the \(T_N\)-action on \(N\) is trivial, it follows from the proof of Proposition 2.2 of \cite{MR0234452} that
  \(TM\otimes \C|_N\)  splits \(T_N\)-equivariantly  as
  \[TM\otimes \C|_N\cong \bigoplus_{i=1}^k V_i\otimes E_i,\]
  where the \(V_i\) are complex irreducible \(T_N\)-representations and the \(E_i\) are complex vector bundles with trivial \(T_N\)-action.
  Therefore the isomorphism type of the \(T_N\)-representation on \(T_xM\otimes \C\) is independent of \(x\in N\).
  In particular, there is an isomorphism of \(T_N\)-representations
  \begin{equation*}
    T_{x_1}M\otimes \C\cong T_{x_2}M\otimes \C.
  \end{equation*}
  So the homomorphisms \(\alpha_{1j}\) and \(\alpha_{2j}\) agree after restriction to \(T_N\) (and reordering).

  Because \(T_N\) has codimension one in \(T\) there is a homomorphism \(\alpha:T\rightarrow S^1\) --unique up to complex conjugation on \(S^1\subset \C\)-- such that \(\ker \alpha=T_N\).
  By the definition of \(T_N\), this \(\alpha\) --or its conjugate \(\bar{\alpha}\)-- induces the \(T\)-action on \(T_{x_i}N\cong \C\), \(i=1,2\).
  Moreover, every homomorphism \(T\rightarrow S^1\) which is trivial on \(T_N\) factors through \(\alpha\).

  We now apply this to \(\alpha_{1j}\cdot\bar{\alpha}_{2j}\), where \(\cdot\) and \(\bar{~}\) denote multiplication and complex conjugation in \(S^1\), respectively.
  So we get some factorisation
  \begin{equation}
    \label{eq:5}
    \alpha_{1j}\cdot\bar{\alpha}_{2j}=\alpha^{k_j}
  \end{equation}
  Forming the derivative of this expression leads to equation (\ref{eq:1}) with \(p=\pm 1\) and \(q=\pm k_j\in \mathbb{Z}\).
    Moreover, in case the weights at \(x_2\) are \(3\)-independent, for given \(\alpha_{1j}\) there is at most one \(\alpha_{2j}\) for which equation (\ref{eq:5}) holds.
  So our claim follows.
\end{proof}

\begin{remark}
An alternative proof of this proposition was given in \cite[p.\ 6]{GuilleminZara}; there, the fact that \(q\in \mathbb{Z}\) followed from the Atiyah-Bott-Berline-Vergne localization theorem.
\end{remark}

\begin{remark}
  \label{sec:gkm-manifolds}
For GKM$_3$-graphs the connection $\nabla$ in Condition $2$ of \ref{defn:gkmgraph} is unique.
\end{remark}

\begin{remark}
A $T$-invariant almost complex structure on a manifold $M$ allows to speak about weights of the isotropy representation that are well-defined elements of $\mathfrak t^*$, not only up to sign. On the level of graphs we say that a GKM graph admits an invariant almost complex structure if there is a lift \(\hat{\alpha}:E(\Gamma)\rightarrow H^2(BT^m)\) of \(\alpha\) such that (\ref{eq:1}) holds with \(p=1\) and $q$ an integer and \(\hat{\alpha}(e)=-\hat{\alpha}(\bar{e})\), for all edges \(e\).
\end{remark}

The relevance of this type of actions is grounded in the fact that for manifolds $M$ with vanishing odd-dimensional (rational) cohomology, the (equivariant) cohomology is determined by the associated GKM graph. We define
\begin{definition}\label{defn:gkm}
We say that an action of a compact torus $T$ on a smooth, compact, orientable  manifold $M$ is \emph{of type GKM$_k$} (simply \emph{GKM} for $k=2$) if $H^{odd}(M)=0$, the fixed point set is finite, and for every fixed point any $k$ weights of the isotropy representation are linearly independent. 
\end{definition}
\begin{theorem}[{\cite[Theorem 7.2]{GKM}}]\label{thm:gkm} For a $T$-action of GKM type with fixed points $p_1,\ldots,p_r$, the inclusion $M^T\to M$ induces an injection 
\[
H^*_T(M)\to H^*_T(M^T)=\bigoplus_{i=1}^r H^*(BT)
\]
whose image is given by the set of tuples $(f_i)\in \bigoplus_{i=1}^rH^*(BT)$ such that $f_i - f_j=\alpha\beta$ with some \(\beta\in H^*(BT)\) whenever $p_i$ and $p_j$ are joined by an edge with label $\alpha$.
\end{theorem}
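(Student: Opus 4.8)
The plan is to run the classical Chang--Skjelbred argument for GKM actions: first establish equivariant formality, then deduce injectivity from Borel localization, and finally identify the image by reducing to codimension-one subtori and computing the equivariant cohomology of a single invariant $2$-sphere.

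\emph{Equivariant formality and injectivity.} Since $H^{odd}(M)=0$ is part of the GKM hypothesis (Definition~\ref{defn:gkm}), the $E_2$-page of the Serre spectral sequence of the Borel fibration $M\hookrightarrow M_T\to BT$ is concentrated in even total degree, so the spectral sequence collapses and $H^*_T(M)$ is a free $S({\mathfrak t}^*)$-module. By the Borel localization theorem the kernel of the restriction $H^*_T(M)\to H^*_T(M^T)$ is the $S({\mathfrak t}^*)$-torsion submodule of $H^*_T(M)$; it vanishes because $H^*_T(M)$ is free, which is the injectivity statement.

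\emph{Reduction to codimension-one subtori.} Freeness of $H^*_T(M)$ is precisely the condition under which the Chang--Skjelbred lemma \cite{changSkjelbred} applies, yielding
\[
\image\bigl(H^*_T(M)\to H^*_T(M^T)\bigr)=\bigcap_{H}\image\bigl(H^*_T(M^H)\to H^*_T(M^T)\bigr),
\]
the intersection taken over all codimension-one subtori $H\subseteq T$. Fix such an $H$. Because $M_1=\bigcup_{\codim H=1}M^H$ is, by the GKM hypothesis, a union of invariant $2$-spheres, $M^H$ is a disjoint union of isolated fixed points together with those invariant $2$-spheres $N$ on which $H$ acts trivially --- equivalently, whose edge label $\alpha$ has $\ker\alpha=H$; if no such sphere exists then $M^H=M^T$ and this term is vacuous. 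These spheres are pairwise disjoint, since two edges meeting at a common vertex have linearly independent labels and hence distinct kernels.

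\emph{The building block and assembly.} Let $N\cong S^2$ be such a sphere with fixed points $p_i,p_j$, and cover it by two invariant disks retracting $T$-equivariantly onto $p_i$ and $p_j$ with overlap $T$-equivariantly homotopy equivalent to the free orbit $T/H$. In the resulting Mayer--Vietoris sequence the terms $H^*_T(N)$, $S({\mathfrak t}^*)\oplus S({\mathfrak t}^*)$ and $H^*_T(T/H)=H^*(BH)$ are concentrated in even degrees, so the connecting maps vanish and the sequence collapses to a short exact sequence, giving
\[
H^*_T(N)\;\xrightarrow{\ \sim\ }\;\bigl\{(f_i,f_j)\in S({\mathfrak t}^*)^{2}\ :\ f_i|_{\ker\alpha}=f_j|_{\ker\alpha}\bigr\},
\]
with the third arrow of the sequence being the surjection $(f_i,f_j)\mapsto f_i|_{\ker\alpha}-f_j|_{\ker\alpha}$. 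Forming the direct sum over the components of $M^H$ and then intersecting over all codimension-one $H$, the right-hand side of the Chang--Skjelbred identity becomes exactly the set of tuples $(f_i)\in\bigoplus_{i=1}^{r}S({\mathfrak t}^*)$ with $f_i|_{\ker\alpha}=f_j|_{\ker\alpha}$ whenever $p_i$ and $p_j$ are joined by an edge labelled $\alpha$, which is the assertion.

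\emph{Expected main obstacle.} The one nontrivial external ingredient is the Chang--Skjelbred lemma, which is cited and may be assumed here. Beyond that, the only point needing care is the degeneration of the Mayer--Vietoris sequences into short exact sequences; this is forced by the concentration of all relevant equivariant cohomology groups in even degrees, which uses $H^{odd}(M)=0$ together with the GKM hypothesis ensuring each $M^H$ is a disjoint union of $2$-spheres and isolated points. The remaining identifications --- that the positive-dimensional components of $M^H$ are exactly the invariant $2$-spheres with $\ker\alpha=H$, and that $(M^H)^T=M^T$ --- are routine consequences of $M_1=\bigcup_{\codim H=1}M^H$.
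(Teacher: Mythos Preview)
Your argument is correct and is exactly the standard proof of the GKM theorem via equivariant formality, Borel localization, and the Chang--Skjelbred lemma. Note, however, that the paper does not give its own proof of this statement: Theorem~\ref{thm:gkm} is merely quoted from \cite[Theorem~7.2]{GKM}, so there is no proof in the paper to compare against. What you have written is precisely the argument one would supply if asked to expand that citation, and all the steps---collapse of the Serre spectral sequence from $H^{odd}(M)=0$, vanishing of torsion giving injectivity, the identification of each $M^H$ as a disjoint union of isolated fixed points and the invariant $2$-spheres with $\ker\alpha=H$, and the Mayer--Vietoris computation of $H^*_T(S^2)$---are handled correctly.
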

Obviously, the image in particular depends only on the labelled graph $\Gamma$, so it is sensible to use the notation $H^*_T(\Gamma)$ for the $H^*(BT)$-subalgebra of $\bigoplus_{v\in V(\Gamma)} H^*(BT)$ defined in the theorem above.

It is well-known that the vanishing of the odd rational cohomology implies that the canonical map $H^*_T(M)\to H^*(M)$ is surjective. In particular, the rational cohomology ring of $M$ is determined by the GKM graph of the action.

\subsection{GKM orbifolds}
The fact that GKM-theory works equally well for torus actions on orbifolds was already remarked in \cite[Section 1.2]{GuilleminZara}. %However, they considered only orbifolds that arise as global quotients of locally free Lie group actions.
In this paper we consider general (orientable) orbifolds $O$, which are given by orbifold atlases on a topological space, consisting of good local charts $(\tilde U_p,\Gamma_p)$ for any point $p\in O$. For the precise definition, and all basics on orbifolds and Lie group actions we use Section 2 of \cite{GGKRW} as a general reference.
Other sources of background material on orbifolds are \cite{MR3244330}, \cite{MR1062771}, \cite{MR1116630}, \cite{MR1401525}, \cite{MR3251232}. But note that the notation differs slightly from source to source.

We consider an action of a torus on a compact, orientable orbifold $O$, in the sense of \cite[Definition 2.10]{GGKRW}. In \cite{GGKRW} it is shown that orbits, as well as components of fixed point sets $O^H$, where $H\subset T$ is a connected Lie subgroup, are strong suborbifolds of $O$. Moreover, for any $p\in O$, with good local chart $\pi:(\tilde U_p,\Gamma_p)\to U_p$ there is an extension $\tilde T_p$ of $T_p$ by $\Gamma_p$, acting on $\tilde U_p$. The $\tilde T_p$-action fixes the single point $\tilde p$ in the preimage $\pi^{-1}(p)$. We thus obtain a well-defined isotropy representation of $\tilde T_p$ on $T_p\tilde U_p$. Its restriction to the identity component $\tilde T_p^o$ of $\tilde T_p$ has well-defined weights \(\alpha\), which we consider, via the projection $\tilde T_p^o\to T_p$, as elements in $\mathfrak t_p^*/\{\pm 1\}$.

With this definition of weights, Definition \ref{defn:gkm} applies to torus actions on orbifolds equally well, and we can speak about torus actions on orbifolds of type GKM$_k$.
This leads to the following definition.

\begin{definition}\label{defn:gkm_orb}
An action of a compact torus $T$ on a smooth, compact, orientable  orbifold $O$ is \emph{of type GKM$_k$} (simply \emph{GKM} for $k=2$) if $H^{odd}(O)=0$, the fixed point set is finite, and for every fixed point any $k$ weights of the isotropy representation are linearly independent. 
\end{definition}

To any such action we can associate an $n$-valent graph $\Gamma$ as in the case of manifolds, because any non-trivial torus action on a two-dimensional compact, orientable orbifold with a fixed point has exactly two fixed points, see \cite[Lemma 3.9]{GGKRW}. For the labelling, we rescale the weights as follows: for a weight $\alpha$ at a fixed point $p$, the intersection of \(\R \alpha\) with the integer lattice in \({\mathfrak t}^*\) is isomorphic to \(\mathbb{Z}\).
We let \(\alpha'\) be a generator of this group, and \(k\) be the number of components of the principal isotropy group of the \(T\)-action on the \(2\)-sphere to which the weight space of \(\alpha\) is tangent.
We define \(\beta=k\alpha'\).

 We remark that the factor $k$ is irrelevant for what follows: we include it in order for the GKM graph to encode the full isotropy groups. Considering $\beta$ as a homomorphism $T\to S^1$, its kernel is precisely the principal isotropy group of the corresponding $2$-sphere.

In order to construct a connection on $\Gamma$, we now restrict to actions of type GKM$_3$. Let $v$ be a vertex of the graph, corresponding to the fixed point $p\in O$, and $e$ an edge with $i(e)=p$, with label $\alpha\in \mathfrak t^*/\{\pm 1\}$. Let $q$ be the fixed point corresponding to $t(e)$. For any other edge $e'$ at $v$, with weight $\beta$, we consider the connected subgroup $H\subset T$ with Lie algebra $\ker \alpha \cap \ker \beta$. By the GKM$_3$-condition and the slice theorem for actions on orbifolds \cite[Theorem 2.18]{GGKRW} the connected component of $O^H$ is a four-dimensional strong suborbifold of $O$. It contains $q$, and there exists an edge $f$ with $i(f) = t(e)$, with weight $\gamma$, such that $\ker \gamma \cap \ker \alpha = \mathfrak h$. We define a connection on $\Gamma$ by $\nabla_e e':=f$. By construction, $\gamma$ is a (rational) linear combination of $\alpha$ and $\beta$, so that Equation \eqref{eq:1} holds.

Also Theorem \ref{thm:gkm} holds true for GKM actions on orbifolds. This was (for torus orbifolds) already observed in \cite[Theorem 4.2]{GGKRW}.

\subsection{Torus manifolds and orbifolds}
\label{sec:torus-manif-orbif}

Here we gather the facts we need to know about torus manifolds and torus orbifolds.
General references for the constructions used here are \cite{BuchstaberPanov1}, \cite{BuchstaberPanov2}, \cite{MasudaPanov}, \cite{DavisJanuszkiewicz}, \cite{GGKRW}.

We start with a general construction of such manifolds and orbifolds.
An \(n\)-dimensional manifold with corners \(P\) is called nice if at each vertex of \(P\) there meet exactly \(n\) facets of \(P\), that is, exactly \(n\) codimension-one faces.
The faces of \(P\) ordered by inclusion form a poset \(\mathcal{P}(P)\), the so-called face poset of \(P\). We also denote by \(\mathcal{F}=\mathcal{F}(P)\) the set of facets of \(P\) and let \(m=|\mathcal{F}|\).

Now let \(P\) be a nice manifold with corners with only contractible faces, and assume that there is a map \(\lambda:\mathcal{F}\rightarrow \mathbb{Z}^n\) such that for every vertex \(v\) of \(P\),
\[\lambda(F_1),\dots,\lambda(F_n)\]
are linearly independent, where the \(F_i\) are the facets of \(P\) which meet in \(v\).

Then we can construct a torus orbifold, i.e. an orientable \(2n\)-dimensional orbifold \(O\) with an action of the \(n\)-dimensional torus \(T^n=\mathbb{R}^n/\mathbb{Z}^n\) with \(O^{T^n}\neq \emptyset\), such that the orbit space of the \(T^n\) action on \(O\) is homeomorphic to \(P\).
The orbifold \(O\) is defined as
\begin{equation*}
  O=(P\times \mathbb{R}^n/\mathbb{Z}^n)/\sim,
\end{equation*}
where \((x,v)\sim (x',v')\) if and only if \(x=x'\) and \(v-v'\) is contained in the \(\mathbb{R}\)-span of all the \(\lambda(F)\) with \(x\in F\).
Here \(T^n\) acts on the second factor of \(O\). Note that replacing the $\lambda(F)$ by nonzero multiples does not change the associated orbifold $O$.

If for every vertex \(v\) the \(\lambda(F_i)\) of the facets \(F_i\) which meet at \(v\) form a basis of \(\mathbb{Z}^n\), then \(O\) is a manifold and the \(T^n\)-action is locally standard, i.e., locally modelled on effective \(n\)-dimensional complex representations of \(T^n\).

The preimages of the facets of \(P\) under the orbit map are invariant suborbifolds of codimension two in \(O\).
Therefore their equivariant Poincar\'e duals \(v_1,\dots,v_m\in H^2_T(O;\mathbb{Q})\) are defined.
Moreover, these Poincar\'e duals form a basis of \(H^2_T(O;\mathbb{Q})\) because the faces of \(P\) are contractible (see \cite{MasudaPanov} and \cite{MR2791564}).

The contractibility of the faces of \(P\) also implies that the rational cohomology of a torus orbifold \(O\) as above is concentrated in even degrees.

Equivalently to giving the labels \(\lambda\) of the facets, one can also define \(O\) by a labeling \(\hat\alpha\)  of the edges of \(O\) in such a way that for edges \(e_1,\dots,e_n\) meeting a vertex, \(\hat\alpha(e_1),\dots,\hat\alpha(e_n)\) is the basis of \({\mathbb{Q}^n}^*\) dual to \(\lambda(F_1),\dots,\lambda(F_n)\in \mathbb{Z}^n\otimes \mathbb{Q}\).
In this way the vertex-edge graph of \(P\) becomes a so-called torus graph
 (see \cite{MaedaMasudaPanov}), i.e. a GKM graph of a torus manifold or orbifold.
 
Similar to the definition of the torus orbifold \(O\) associated to the pair \((P,\lambda)\) one can associate a moment angle manifold of dimension \(n+m\) to \(P\). 
This goes as follows.

Denote the facets of \(P\) by \(F_1,\dots,F_m\) and for each \(i=1,\dots,m\) let \(S^1_i\) be a copy of the circle group.
Then define
\begin{equation*}
  Z_P=(P\times (\prod_{i=1}^{m} S^1_i))/ \sim,
\end{equation*}
where \((x,t)\sim (x',t')\) if and only if \(x=x'\) and \[t't^{-1}\in \prod_{i;\;x\in F_i}S^1_i\subset \prod_{i=1}^m S^1_i.\]
There is an action of \(T^m=\prod_{i=1}^m S^1_i\) on \(Z_P\) induced by multiplication on the second factor.
Moreover the torus orbifold \(O\) from above is the quotient of the action of the kernel of a homomorphism \(\varphi:T^m\rightarrow T^n=\mathbb{R}^n/\mathbb{Z}^n\). Here \(\varphi\) is defined by the condition that its restriction to \(S^1_i\) induces an isomorphism \(S^1_i\rightarrow \R\lambda(F_i)/(\mathbb{Z}^n\cap \R\lambda(F_i))\).

Note that if \(P\) is a product of simplices \(\Delta^{n_i}\) and quotients \(\Sigma^{m_i}=S^{2m_i}/T^{m_i}\), then \(Z_P\) is a product of spheres. We can equip this product with the product metric of the round spheres.
If we do so, \(T^m\) is identified with a maximal torus of the isometry group of \(Z_P\).

\begin{example}
  At the end of this section we give examples of torus orbifolds.
  Let \(P=\Delta^2\) be a triangle. Then we have \(n=2\) and \(m=3\). Denote by \(F_1,F_2,F_3\) the facets of \(P\). Moreover, let
  \begin{align*}
    \lambda(F_1)&=(1,0),&\lambda(F_2)&=(0,1),&\lambda(F_3)&=(\alpha,\beta)
  \end{align*}
  with \(\alpha,\beta \in \mathbb{Z}-\{0\}\).
  Then by the above discussion the pair \((P,\lambda)\) defines a torus orbifold \(O\). Note that \(O\) is a torus manifold if and only if \(|\alpha|=|\beta|=1\).

  The moment angle manifold \(Z_P\) associated to \(P\) is \(S^5\subset \C^3\) with a linear \(T^3=\mathbb{R}^3/\mathbb{Z}^3\)-action.
  The map \(Z_P\rightarrow O\) constructed above is the orbit map for the action of the subtorus of \(T^3\) whose Lie-algebra is generated by \((-\alpha,-\beta,1)\).
  Therefore \(O\) is a so-called weighted projective space.
\end{example}

\section{Coverings of GKM graphs}
\label{sec:coverings-gkm-graphs}

%We start with the definition of a GKM graph. The definition is made in such a way that it fits for GKM$_3$ orbifolds.
%It differs from the definition of a GKM orbifold graph given by Guillemin and Zara. 
%Namely, our weights \(\alpha\) in the geometric situation are not the weights of a torus representation on a tangent space to a fixed points. But they are chosen in such a way that they are dual to the codimension one isotropy groups.
%One can get one out of the other by multiplying with an integer.
%This makes no difference in the statement of the GKM theorem because it depends only on the rational lines spanned by the weights.
%Moreover in the case of manifolds there is no difference between the two definitions of the weights.

In this section we construct a covering of a GKM$_k$ graph, $k\geq 4$, with small three-dimensional faces by the vertex edge graph of a product \(\prod_i \Delta^{n_i}\times \prod_i\Sigma^{m_i}\).
We start with the definition of what we mean by the faces of a graph.

\begin{definition}
Let $\Gamma$ be a graph with a connection $\nabla$. Then an \emph{$l$-dimensional face} of $\Gamma$ is a connected $l$-valent $\nabla$-invariant subgraph of $\Gamma$.
\end{definition}

\begin{lemma}\label{lem:uniquefaceofabstractgkmgraph}
Let $\Gamma$ be a GKM$_k$-graph, where $k\geq 3$. Then for any vertex $v$ of $\Gamma$ and any edges $e_1,\ldots e_l$ that meet at $v$, where $1\leq l\leq k-1$, there exists a unique $l$-dimensional face of $\Gamma$ that contains $e_1,\ldots,e_l$.
\end{lemma}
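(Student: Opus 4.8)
The statement asserts existence and uniqueness of an $l$-dimensional face through a given set of $l$ edges at a vertex, for $1 \le l \le k-1$.

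The natural approach is to build the face by a "closure" process: start with the edges $e_1, \dots, e_l$ at $v$, and successively add all edges obtained by applying the connection maps $\nabla_e$ to edges already in the subgraph. Concretely, I would define $\Gamma'$ to be the smallest $\nabla$-invariant subgraph containing $e_1,\dots,e_l$, obtained as the union of an increasing chain of finite edge-sets where at each step one closes under all $\nabla_e$ with $e$ in the current set. Since $\Gamma$ is finite this process terminates. By construction $\Gamma'$ is $\nabla$-invariant and connected (every edge added is attached via a connection map to an edge already present, and $e_1,\dots,e_l$ share the vertex $v$). The content is then to show $\Gamma'$ is $l$-valent, i.e. exactly $l$ edges of $\Gamma'$ meet at each of its vertices, and that any $l$-dimensional face containing $e_1,\dots,e_l$ must equal $\Gamma'$.

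For uniqueness and the lower bound on valence, the key observation is that any $\nabla$-invariant subgraph containing an edge $e$ must also contain $\nabla_e(e') $ for every edge $e'$ of the subgraph at $i(e)$; hence any face containing $e_1,\dots,e_l$ contains $\Gamma'$. So it suffices to show $\Gamma'$ is itself $l$-valent: then $\Gamma'$ is a face, and any other face $F$ containing the $e_i$ contains $\Gamma'$ and is $l$-valent, forcing $F = \Gamma'$ by a connectedness argument (an $l$-valent connected subgraph of an $l$-valent connected graph with which it shares a vertex and all edges at that vertex must coincide — one propagates equality along edges using that at each vertex both have exactly $l$ edges). The main work is therefore the valence count.

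To see $\Gamma'$ is $l$-valent: at $v$ it has exactly the $l$ edges $e_1,\dots,e_l$, because the connection maps $\nabla_e$ preserve the span of the weights — this is where the GKM$_k$ hypothesis and Equation \eqref{eq:1} enter. Precisely, let $U \subset H^2(BT^m)\otimes\Q$ be the $l$-dimensional subspace spanned by $\hat\alpha(e_1),\dots,\hat\alpha(e_l)$. I claim that for every edge $f$ of $\Gamma'$, the weight $\hat\alpha(f)$ lies in (a suitable parallel-transported copy of) $U$; more robustly, one shows by induction on the closure process that for every vertex $w$ of $\Gamma'$, the set $E_w \cap E(\Gamma')$ has exactly $l$ elements and their weights are linearly independent and span a fixed $l$-dimensional subspace transported along $\Gamma'$. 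The inductive step: if $w$ has edges $f_1,\dots,f_l$ in $\Gamma'$ with independent weights spanning $U_w$, and $f = f_1$, then at $t(f)$ the edges of $\Gamma'$ are exactly $\nabla_f(f_1) = \bar f, \nabla_f(f_2), \dots, \nabla_f(f_l)$; by \eqref{eq:1} each $\hat\alpha(\nabla_f(f_i))$ is a rational combination of $\hat\alpha(f_i)$ and $\hat\alpha(f)$, hence lies in $U_w$, and since $\nabla_f$ is a bijection $E_w \to E_{t(f)}$ these $l$ weights are obtained from $l$ independent ones by an invertible (rational) linear map — a triangular change of basis fixing the "$\hat\alpha(f)$-direction" — so they remain independent and span $U_w$. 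Crucially, no \emph{further} edges at $t(f)$ can belong to $\Gamma'$: any such edge would be $\nabla_g(g')$ for some $g, g' \in E(\Gamma')$ incident to $t(f)$, but we have just shown all such edges already lie among the $l$ produced, and the GKM$_k$ condition guarantees that at most $k-1 \ge l$ weights being constrained to an $l$-dimensional space is consistent — more to the point, since the $l$ edges already found at $t(f)$ have independent weights spanning $U_w$, and the connection is a bijection, closing under $\nabla$ adds nothing new at that vertex.

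The main obstacle I anticipate is making the last point fully rigorous: showing the closure does not "leak" to extra edges at an already-saturated vertex. One must argue that the closure process, when it reaches a vertex $w$ via some edge and produces $l$ edges there, will only ever produce those same $l$ edges no matter through which edge of $\Gamma'$ at $w$ one enters — i.e. the set of $\Gamma'$-edges at $w$ is independent of the order of closure. This follows from the compatibility conditions $\nabla_{\bar e} = \nabla_e^{-1}$ and $\nabla_e(e) = \bar e$, which make "parallel transport around" consistent enough locally, combined with the linear-algebra fact that the span $U_w$ of the weights is determined by any $l$ independent ones among them. I would package this as: the edge-set of $\Gamma'$ at each vertex, together with the transported subspace $U_w$, is well-defined by the requirement of $\nabla$-invariance, and the weight-span argument shows this local data is exactly $l$-dimensional everywhere; hence $\Gamma'$ is the unique $l$-dimensional face through $e_1,\dots,e_l$.
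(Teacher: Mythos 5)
Your proposal is correct and rests on essentially the same argument as the paper's proof: Equation \eqref{eq:1} forces every edge produced by the connection to keep its weight inside the fixed $l$-dimensional span $V$ of $\hat\alpha(e_1),\dots,\hat\alpha(e_l)$, and the GKM$_k$ condition (with $l\leq k-1$) then caps the number of edges at any vertex with weight in $V$ at $l$, which combined with the bijectivity of the $\nabla_e$ gives exact $l$-valence and hence existence and uniqueness. The only difference is bookkeeping: you build the face from below as the $\nabla$-closure of $e_1,\dots,e_l$, while the paper takes the connected component at $v$ of the subgraph of all edges labelled in $V$ and verifies it is $\nabla$-invariant and $l$-valent; your worried-about ``leakage'' at revisited vertices is ruled out precisely by this span-plus-GKM$_k$ bound, as you indicate.
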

\begin{proof}
  First recall that by Remark~\ref{sec:gkm-manifolds} there is a unique connection on \(\Gamma\) which is compatible with the weights.
  Therefore it makes sense to speak about the faces of a GKM$_k$-graph, \(k\geq 3\).
  
  Let $V$ be the ($l$-dimensional) span of the $\alpha(e_i)$ in $H^2(BT^m)$. Consider the subgraph of $\Gamma$ that consists of those edges whose labeling is contained in $V$, and let $\tilde\Gamma$ be its connected component of $v$. We claim that this subgraph is $\nabla$-invariant and $l$-valent.

As $\Gamma$ is GKM$_k$, with $k>l$, the only edges at $v$ contained in $\tilde\Gamma$ are $e_1,\ldots,e_l$. Moreover, whenever $w$ is an $l$-valent vertex of $\tilde\Gamma$ and $e\in E(\Gamma)$ with $i(e) = w$, then also $t(e)$ is $l$-valent. In fact, \eqref{eq:1} shows that for any edge $e'$ at $w$ in $E(\tilde\Gamma)$, also $\nabla_e e'$ is an edge of $E(\tilde\Gamma)$, and the GKM$_k$ property of $\Gamma$ shows that there is no further edge at $t(e)$ contained in $\tilde\Gamma$.  
\end{proof}

A simply connected space \(X\) is called rationally elliptic if it has finite dimensional rational homotopy and finite dimensional rational cohomology, i.e.,
\begin{align}
  \label{eq:3}
  \sum_{i\geq 2}\dim \pi_i(X)\otimes \mathbb{Q}&<\infty &\text{and}&&\sum_{i\geq 2}\dim H^i(X)\otimes \mathbb{Q}&<\infty.
\end{align}
In this case the minimal Sullivan model of \(X\) is elliptic.
Moreover, for simply connected spaces the other implication also holds.
However, there are non-simply connected spaces whose minimal Sullivan model is elliptic which do not satisfy the conditions (\ref{eq:3}).

\begin{lemma}
\label{sec:gkm-graphs}
  Let \(O\) be a GKM$_3$ manifold or orbifold which admits an invariant metric of non-negative curvature or whose minimal Sullivan model is elliptic. Then each two-dimensional face of the GKM graph of \(O\) has at most four vertices. 
\end{lemma}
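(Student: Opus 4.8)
The plan is to reduce the assertion about a two-dimensional face $F$ of the GKM graph of $O$ to an assertion about a four-dimensional suborbifold of $O$. First I would realise $F$ as the GKM graph of a $4$-dimensional torus orbifold. By Lemma~\ref{lem:uniquefaceofabstractgkmgraph}, $F$ is the unique two-dimensional face through a vertex $v$ and two edges $e_1,e_2$ at $v$ with labels $\alpha,\beta$. Letting $H\subset T$ be the connected subgroup with Lie algebra $\ker\alpha\cap\ker\beta$ and $N$ the component of $O^H$ through the fixed point corresponding to $v$, the slice theorem for orbifolds \cite[Theorem~2.18]{GGKRW} together with the GKM$_3$-condition shows that $N$ is a four-dimensional orientable strong suborbifold of $O$ on which $T/H\cong T^2$ acts effectively with a fixed point; that is, $N$ is a torus orbifold. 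The edges of $F$ at a vertex $w$ are exactly those edges at $w$ whose label lies in the linear span of $\alpha$ and $\beta$ (as in the proof of Lemma~\ref{lem:uniquefaceofabstractgkmgraph}), hence exactly the invariant $2$-spheres contained in $N$; thus $F$ is the GKM graph of the $T^2$-action on $N$, and its number of vertices equals the number of $T^2$-fixed points in $N$, which (since $T^2$ is connected) equals $\chi(N)$, and equals the total Betti number $b(N)$ because $H^{odd}(N)=0$ — the latter holding since $H^{odd}(O)=0$ and components of fixed point sets of torus actions inherit the vanishing of odd cohomology. It therefore suffices to prove $b(N)\le 4$.

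In the rationally elliptic case I would use that components of fixed point sets of torus actions on rationally elliptic orbifolds are again rationally elliptic (see \cite{GGKRW}). Then $N$ is a closed, rationally elliptic four-orbifold with $H^{odd}(N)=0$; by the structure theory of rationally elliptic spaces with vanishing odd rational cohomology of dimension four, its rational cohomology is that of $S^4$, $\C P^2$, or $S^2\times S^2$, so $b(N)\in\{2,3,4\}$.

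In the non-negatively curved case the key geometric input is that $N=O^H$, being a component of the fixed point set of an isometric action, is totally geodesic, hence carries an invariant metric of non-negative curvature. Then $X:=N/T^2$ is a compact, non-negatively curved Alexandrov space of dimension two which, by the slice theorem, is a topological surface with boundary whose only corners are the images of the $T^2$-fixed points. At such a corner the tangent cone of $X$ is the quotient of the tangent space of $N$ — flat and, for an invariant metric, a $T^2$-invariant orthogonal sum of two weight planes — by the linear isotropy $T^2$-action, i.e.\ a flat quadrant; hence the angle there is $\pi/2$ (or $\pi/4$ if, in the orbifold case, the local isotropy group additionally interchanges the two weight planes; in any case at most $\pi/2$). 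I would then double $X$ along its boundary: by Perelman's doubling theorem the double $DX$ is a non-negatively curved Alexandrov $2$-sphere carrying a conical singularity of cone angle at most $\pi$ at each doubled corner. Gauss--Bonnet for Alexandrov surfaces — equivalently, that the (non-negative) curvature measure of $DX$ has total mass $2\pi\chi(S^2)=4\pi$ while each doubled corner contributes mass $2\pi-(\text{cone angle})\ge\pi$ — forces $X$ to have at most four corners. Since the corners of $X$ are precisely the vertices of $F$, this gives the claim.

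The step I expect to be the main obstacle is the orbifold version of the analysis of $X=N/T^2$: verifying that $X$ is genuinely a surface with boundary, that its only corner points arise from the $T^2$-fixed points, and computing the cone angle there despite the extra finite isotropy groups. In the manifold case this is classical — the orbit space of a cohomogeneity-two torus action is a polygon with right-angled corners — but in the orbifold setting one must control the local structure using the slice theorem and the structure theory of torus orbifolds from \cite{GGKRW}; once this is in place, the bound follows from the Alexandrov Gauss--Bonnet argument above.
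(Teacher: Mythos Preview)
Your argument is correct and follows the same overall strategy as the paper: realise a two-dimensional face as the GKM graph of a four-dimensional totally geodesic (resp.\ rationally elliptic) invariant suborbifold $N$, and then bound its fixed points. The differences are in execution. In the non-negatively curved case the paper works directly on the polygon $N/T$ without doubling: choosing geodesics from one corner $a_0$ to the others and applying Toponogov's theorem to the resulting triangles yields $\pi(k-2)\le \tfrac{\pi}{2}k$, hence $k\le 4$. Your doubling-plus-Gauss--Bonnet argument reaches the same conclusion and is equally standard, but is slightly less direct. In the rationally elliptic case the paper invokes \cite{Allday} for ellipticity of the minimal model of $N$ and then the formal-dimension inequality \cite[Theorem~32.6(ii)]{FHT} to get $b_2(N)\le 2$; your appeal to the classification of rational cohomology types of rationally elliptic four-dimensional spaces with vanishing odd cohomology is equivalent but less self-contained, since that classification itself rests on the same exponent bounds.
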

\begin{proof}
  The proof is essentially the same as the proof of Lemma 4.2 in \cite{MR3355120}.
  In the non-negatively curved case it was first discussed in \cite{MR2784821}.
  It has been translated to the orbifold setting in \cite{GGKRW}.
  Here we repeat it for the sake of completeness.
  
  The two-dimensional faces of the GKM graph of \(O\) are GKM graphs of four-dimensional invariant totally geodesic suborbifolds of \(O\).
  These submanifolds are fixed point components of codimension-two subtori of \(T\).
  They are non-negatively curved if \(O\) is non-negatively curved.
  Let \(O'\) be one of these suborbifolds.
  Then \(O'/T\) is a two-dimensional non-negatively curved Alexandrov space with totally geodesic boundary, such that the points in \((O')^T\) correspond to corners of the orbit space, i.e. points  on the boundary whose space of directions has diameter \(\pi/2\).
  Let \(a_0,\dots,a_{k-1}\) be these corners such that for \(i\in\mathbb{Z}/k\mathbb{Z}\), \(a_i\) is connected to \(a_{i+1}\) by a totally geodesic arc which is contained in the boundary.
  For \(i\neq 0\) choose geodesics \(\gamma_i\) from \(a_0\) to \(a_i\), such that \(\gamma_1\) and \(\gamma_{k-1}\) are part of the boundary.

  Then by Toponogov's Theorem the sum of angles in each of the triangles spanned by \(\gamma_i,\gamma_{i+1}\) and the part of the boundary between \(a_i\) and \(a_{i+1}\) is at least \(\pi\). Summing over all these triangles we get the inequality
  \begin{equation*}
    \pi(k-2)\leq \frac{\pi}{2} k.
  \end{equation*}
Hence the claim follows in this case.

Now assume that the minimal Sullivan model of \(O\) is elliptic. Then, by \cite{Allday}, the minimal model
\((\Lambda V,d)\) of \(O'\) is also elliptic. Since \(\chi(O'^T) = \chi(O' )\), the number of vertices in the GKM graph of \(O'\) is equal to the Euler characteristic of \(O'\). Moreover, since \(H^*(O)\) is evenly graded, \(H^{\text{odd}}(O')=0\) by localization in equivariant cohomology. In particular, \(H^1(O')=0\).

Hence, by \cite[Theorem
32.6 (ii)]{FHT} and \cite[Proposition 12.2]{FHT}, we have
\begin{equation*}
4 \geq 2\dim V^2=2b_2(O').  
\end{equation*}

Therefore we have \(\chi(O')\leq 4\) and there are at most four vertices in the GKM graph of \(O'\).
\end{proof}

\begin{lemma}
\label{sec:gkm-graphs-1}
Let \(O\) be a orientable GKM$_4$ orbifold such that all two-dimensional faces of the GKM graph of \(O\) have at most four vertices. Then each three-dimensional face of the GKM graph of \(M\) has one of the following combinatorial types: \(\Delta^3\), \(\Sigma^3\), \(\Delta^2\times I\), \(\Sigma^2\times I\), \(I^3\).
Moreover, the face structure of the three-dimensional faces induced by the connection is the natural one.
\end{lemma}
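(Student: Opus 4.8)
The plan is to realise a three-dimensional face $\Gamma'$ of the GKM graph of $O$ as the GKM graph of a six-dimensional torus suborbifold, to extract from an incidence count an Euler characteristic identity which leaves only finitely many possibilities for the numbers of biangular, triangular and quadrilateral two-faces of $\Gamma'$, and then to identify $\Gamma'$ in each of these cases. First I would observe that $\Gamma'$, being a connected $3$-valent $\nabla$-invariant subgraph, inherits a GKM$_3$-structure and is the GKM graph of a six-dimensional orientable invariant torus suborbifold $O'\subseteq O$ -- a component of $O^H$ for the subtorus $H$ attached to a generic covector complementary to the edge-labels at a vertex, exactly as in the construction of the connection in Section~\ref{Preliminaries}. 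Every two-dimensional face of $\Gamma'$ is a two-dimensional face of $\Gamma$ (Lemma~\ref{lem:uniquefaceofabstractgkmgraph}), hence has at most four vertices; a connected $2$-valent graph on at most four vertices is a cycle $C_k$ with $k\in\{2,3,4\}$, i.e. combinatorially the vertex-edge graph of $\Sigma^2$, $\Delta^2$ or $I^2$.

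Next I would pass to the orbit space $P'=O'/T$, a compact orientable nice $3$-manifold with corners whose facets are exactly the two-dimensional faces of $\Gamma'$, and argue on the closed connected orientable surface $\partial P'$. It carries a CW structure whose $1$-skeleton is $\Gamma'$ and whose two-cells are $b$ biangles, $t$ triangles and $s$ quadrilaterals. Since (by Lemma~\ref{lem:uniquefaceofabstractgkmgraph}) each edge of $\Gamma'$ lies in exactly two two-faces and the three two-faces through any vertex meet pairwise along the three edges at that vertex, counting incidences gives $2b+3t+4s=2\lvert E(\Gamma')\rvert=3\lvert V(\Gamma')\rvert$, so that
\[
\chi(\partial P')=\lvert V(\Gamma')\rvert-\lvert E(\Gamma')\rvert+b+t+s=\frac{4b+3t+2s}{6}>0 .
\]
An orientable closed surface with positive Euler characteristic is $S^2$, hence $4b+3t+2s=12$. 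Since a triangle, respectively a square, requires three, respectively four, distinct vertices, this leaves $(b,t,s)\in\{(0,0,6),(0,2,3),(0,4,0),(2,0,2),(3,0,0)\}$ together with the two remaining candidates $(1,0,4)$ and $(1,2,1)$.

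To finish I would identify each case. If there is a biangular face, with vertices $v_0,v_1$, parallel edges $a,b$ from $v_0$ to $v_1$ and remaining edges $c$ at $v_0$ and $d$ at $v_1$, then by uniqueness of faces the edge of the two-face through $\{a,c\}$ (resp. $\{b,c\}$) at $v_1$ other than $\bar a$ (resp. $\bar b$) cannot be $\bar b$ (resp. $\bar a$) -- that would make the face equal to the biangle -- so $\nabla_a c=\nabla_b c=d$; chasing the faces through $\{a,c\}$ and $\{b,c\}$ and using the four-vertex bound at each step then shows $\Gamma'$ is the vertex-edge graph of $\Sigma^3$ if $t(c)=v_1$ and of $\Sigma^2\times I$ otherwise, so in particular $b\in\{2,3\}$ and the candidates $(1,0,4)$, $(1,2,1)$ are excluded. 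If $b=0$, then $\Gamma'$ has no double edge (parallel edges $e_1,e_2\colon v\to w$ would share a two-face at $v$, which on at most four vertices with both edges ending at $w$ can only be a biangle), so $\Gamma'$ is a simple $3$-valent graph underlying a cell decomposition of $S^2$ with triangular and square faces only; Euler's relation gives $3t+2s=12$, hence $(t,s)\in\{(4,0),(2,3),(0,6)\}$, and a direct inspection -- or Steinitz' theorem together with the elementary fact that the only simple $3$-polytopes all of whose facets are triangles or quadrilaterals are $\Delta^3$, $\Delta^2\times I$ and $I^3$ -- identifies $\Gamma'$ with the vertex-edge graph of $\Delta^3$, $\Delta^2\times I$ or $I^3$ respectively.

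The hard part will be the reconstruction in the biangular case: one must repeatedly chase parallel transport around faces and apply the uniqueness of faces, with careful bookkeeping of which vertices are forced to coincide. A subtler point is that the argument genuinely uses the orientability of $O'$ -- hence of $P'$ and of $\partial P'$ -- to conclude $\partial P'\cong S^2$: for a non-orientable $\partial P'$ one would have $\chi=1$ and would additionally have to dispose of the complex given by $K_4$ with its three Hamiltonian $4$-cycles as two-faces, which is not realised by an orientable orbifold precisely for this reason.
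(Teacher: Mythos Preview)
Your proof is correct and takes a genuinely different route from the paper's. The paper argues by a purely local case analysis: it fixes a facet $F_1$ of $P'=N_i/T$ and an adjacent facet $F_2$ and runs through the possible vertex-counts of the pair (cases 1a--c, 2a--b, 3), in each instance either reading off the combinatorial type directly from the $3$-valence or reaching a contradiction; orientability enters only at the very end of case~3, to exclude a configuration of two squares sharing two opposite edges (which would form a M\"obius strip). Your argument is global: you compute $\chi(\partial P')=(4b+3t+2s)/6>0$ and invoke orientability once to force $\partial P'\cong S^2$, obtaining the Diophantine constraint $4b+3t+2s=12$ and hence a short finite list of $(b,t,s)$ before any case analysis. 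This makes the role of orientability more transparent and replaces several ad hoc exclusions in the paper (e.g.\ its case~1b) by a single numerical identity. Two places where you could be more explicit: first, to know that $\partial P'$ is genuinely a closed surface you need that the facets of $P'$ are discs, which the paper deduces from rational acyclicity of the faces (a consequence of $H^{\mathrm{odd}}(O')=0$) and the classification of surfaces; second, in the $b=0$ case Steinitz is more than you need---direct inspection is immediate (for $(t,s)=(4,0)$ the only simple cubic graph on four vertices is $K_4$; for $(2,3)$ the only simple cubic planar graph on six vertices is the prism; for $(0,6)$ a one-vertex neighbourhood chase shows the cube is the unique simple cubic quadrangulation of $S^2$ on eight vertices).
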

\begin{proof}
  First note that the three-dimensional faces of the GKM graph of \(O\) are GKM graphs of six-dimensional torus orbifolds \(N_1,\dots,N_k\).

  Note that since \(O\) is orientable, all \(N_i\) (and therefore all the orbit spaces \(N_i/T\)) are orientable orbifolds without (and with, respectively) boundary.
  Moreover, because the cohomology of \(O\) (and therefore that of the \(N_i\)) is concentrated in even degrees, the orbit space \(N_i/T\) and all its faces are acyclic over the rationals.
  Because two-dimensional orbifolds are homeomorphic to surfaces it follows from the classification of surfaces that the facets, i.e. 2-dimensional faces, of \(N_i/T\) are homeomorphic to discs \(D^2\).
  Moreover, by Lemma~\ref{lem:uniquefaceofabstractgkmgraph}, any two edges of the GKM graph of \(N_i\) meeting in a vertex span a unique \(2\)-dimensional face of \(N_i/T\).

  There are the following cases:
  \begin{enumerate}
  \item There is a facet \(F_1\) of \(N_i/T\), which has two vertices. And, there is another facet \(F_2\) of \(N_i/T\), which intersects with \(F_1\) in an edge and has
    \begin{enumerate}
    \item two vertices, or
    \item three vertices, or
    \item four vertices.
    \end{enumerate}
  \item  There is a facet \(F_1\) of \(N_i/T\), which has three vertices. And, there is another facet \(F_2\) of \(N_i/T\), which intersects with \(F_1\) in an edge and has
    \begin{enumerate}
    \item three vertices, or
    \item four vertices.
    \end{enumerate}
  \item All facets of \(N_i/T\) have four vertices.
  \end{enumerate}

In case 1.a), it is clear that \(P=N_i/T\) is combinatorially equivalent to \(\Sigma^3\).
Moreover, in case 1.c), using the $3$-valence of the graph of \(N_i\) one easily sees that \(P\) is of type \(\Sigma^2\times I\).

Next assume that \(F_2\) is of type \(\Delta^2\), that is, we are in case 1.b). Let \(v_0\) be the vertex of \(F_2\) which does not belong to \(F_1\) and \(v_1\) and \(v_2\) be the other vertices. Then one sees by considering the faces which meet at \(v_1\) that the two edges \(v_0v_1\) and \(v_0v_2\) belong to two different faces which both contain all three vertices, as in the first graph in Figure \ref{false3dface}.
But any two edges at \(v_0\) must span a unique face, a contradiction.

Alternatively, the following argument also leads to a contradiction.
By the \(3\)-valence of the graph there is a third edge \(e\)  starting from \(v_0\).
Since it is contained in a two-dimensional face of \(N_i/T\), the end point of \(e\) must be \(v_1\) or \(v_2\) (or must be connected via one edge to one of \(v_1,v_2\)).
But this cannot happen because of the \(3\)-valence of the graph (at \(v_1\) and \(v_2\)).

Hence the case 1.b) does not occur.

 \begin{figure}[htb]
 \begin{center}
 \includegraphics[height=100pt]{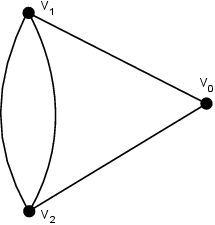}
 \qquad\qquad\qquad
 \includegraphics[height=100pt]{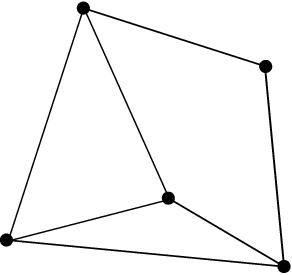}
\end{center} 
  \caption{These graphs do not occur}
 \label{false3dface}
 \end{figure}

 We now assume that \(F_1\) is of type \(\Delta^2\).
 That is, we are in one of the cases 2.a) or 2.b).
 
 If all other facets of \(P\) which have an edge with \(F_1\) in common are also of type \(\Delta^2\), then \(P\) must be of type \(\Delta^3\).
If all faces which have an edge with \(F_1\) in common are of type \(I^2\),  then \(P\) is of type \(\Delta^2\times I\).

Therefore we have to exclude the case that there is a face \(F_2\) of type \(\Delta^2\) and a face \(F_3\)  of type \(I^2\) such that \(F_1\cap F_2\cap F_3\) is a vertex, as in the second graph in Figure \ref{false3dface}.

In this case the third face \(F_4\) which has an edge with \(F_1\) in common must be of type \(I^2\).
Using the \(3\)-valence of the graph one now gets a contradiction in a similar way as in case 1.b).

In the remaining case 3) let \(F_1\) be one of the two-dimensional faces of \(N_i/T\).
Then by Lemma~\ref{lem:uniquefaceofabstractgkmgraph} there are four not necessarily pairwise distinct two-dimensional faces \(F_2,\dots,F_5\)   of \(N_i/T\) which have non-trivial intersection with \(F_1\).
If all the intersections \(F_i\cap F_j\), \(i=1,\dots,5\), are connected or empty, then \(F_2,\dots,F_5\) are pairwise distinct and it is clear that \(P\) is of type \(I^3\).

Therefore assume that \(F_1\cap F_2\) has two components.
These two components must then both be edges of \(P\).

Let \(v_1,\dots,v_4\) be the vertices of \(F_1\) such that \(v_i\) and \(v_{i+1}\) are connected by an edge \(v_iv_{i+1}\) for all \(i\in \mathbb{Z}/4\mathbb{Z}\).

Assume that \(v_1v_2\) and \(v_3v_4\) are contained in the intersection of \(F_1\) and \(F_2\).
If \(v_1\) and \(v_4\) were connected by an edge in \(F_2\), then there would be a facet of type \(\Sigma^2\) in \(P\), contradicting our assumption.
Therefore \(v_1\) and \(v_3\) are connected by an edge in \(F_2\) (and similarly \(v_2\) and \(v_4\)).
So \(F_1\cup F_2\) is homeomorphic to a Moebius strip, contradicting our orientability assumption on \(O\).
\end{proof}

\begin{definition}
\label{sec:gkm-graphs-6}
Let $\Gamma$ be a connected graph with a connection $\nabla$. We say that $\Gamma$ is a \emph{graph with small three-dimensional faces} if the following conditions hold true:
\begin{enumerate}
\item For any $x\in V(\Gamma)$ and distinct edges $e_1,e_2,e_3$ meeting at $x$, there exits a unique $3$-dimensional face of $\Gamma$ containing $e_1,e_2$ and $e_3$.
\item The conclusion of Lemma \ref{sec:gkm-graphs-1} holds true, i.e., any three-dimensional face of $\Gamma$ has the combinatorial type of $\Delta^3, \Sigma^3$, $\Delta^2\times I$, $\Sigma^2\times I$ or $I^3$ (see Figure \ref{small3dfaces}) and these faces have the natural face structure.
\end{enumerate}
\end{definition}

\begin{figure}[htb]
 \begin{center}
 \includegraphics[height=100pt]{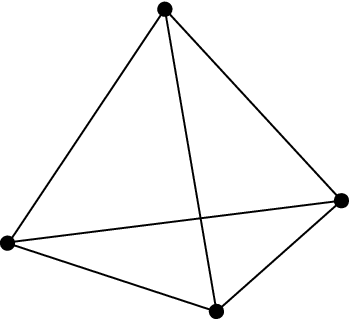}
 \qquad \qquad
 \includegraphics[height=100pt]{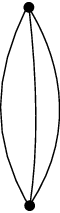}
 \qquad \qquad
 \includegraphics[height=100pt]{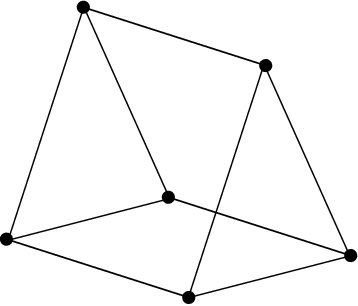}
 \qquad
 \qquad
 \includegraphics[height=100pt]{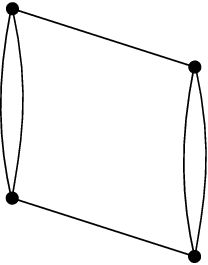}
 \qquad
 \qquad
 \includegraphics[height=100pt]{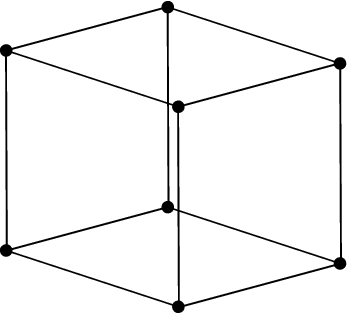}

\end{center} 
  \caption{Small three-dimensional faces}
 \label{small3dfaces}
\end{figure}

Lemmas \ref{lem:uniquefaceofabstractgkmgraph} and \ref{sec:gkm-graphs-1} thus say that the GKM graph of a connected nonnegatively curved GKM$_4$ orbifold is a graph with small three-dimensional faces.

\begin{definition}
  Let \(x\in V(\Gamma)\), where $\Gamma$ is a graph with small three-dimensional faces. We call a subgraph \(\Gamma'\subset \Gamma\) a \emph{local factor} at \(x\) if \(\Gamma'\) contains \(x\), is \(\nabla\)-invariant, has the combinatorial type of \(\Delta^k\) or \(\Sigma^k\), \(\nabla\) induces the natural face structure on \(\Gamma'\) and \(\Gamma'\) is maximal with these properties. 
\end{definition}

\begin{lemma}
\label{sec:gkm-graphs-3}
  Let $\Gamma$ be a graph with small three-dimensional faces. For two edges \(e,e'\)  emanating from \(x\) we have:
  \begin{enumerate}
  \item \(e\) and \(e'\) belong to the same local factor of type \(\Delta^k\) if and only if \(e\) and \(e'\) span a triangle.
  \item \(e\) and \(e'\) belong to the same local factor of type \(\Sigma^k\)  if and only if \(e\) and \(e'\) span a biangle.
  \item \(e\) and \(e'\) do not belong to the same local factor if and only \(e\) and \(e'\) span a square.
  \end{enumerate}
\end{lemma}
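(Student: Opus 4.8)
The plan is to reduce everything to a local analysis around the vertex $x$, exploiting the classification of the $3$-dimensional faces in Definition~\ref{sec:gkm-graphs-6}. The first step is to show that any two distinct edges $e,e'$ at $x$ span a \emph{unique} $2$-dimensional face $F(e,e')$, and that $F(e,e')$ is combinatorially a triangle, a biangle or a square. Uniqueness is clear: a $2$-valent connected $\nabla$-invariant subgraph through $e,e'$ is forced by iterated parallel transport. For the type, pick a third edge $e''$ at $x$ (if $x$ has valence $\le 2$ the lemma is trivial), use condition (1) of the definition to get a $3$-dimensional face $G\supseteq\{e,e',e''\}$, and note that $F(e,e')$ is the $2$-face of $G$ through $e,e'$; since $G$ is one of $\Delta^3,\Sigma^3,\Delta^2\times I,\Sigma^2\times I,I^3$, whose $2$-faces are exactly $\Delta^2,\Sigma^2,I^2$, the claim follows. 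In particular the three right-hand alternatives in (1)--(3) are mutually exclusive and exhaustive, and a pair $e,e'$ cannot lie in both a maximal $\Delta^k$-simplex and a maximal $\Sigma^k$-simplex (one has $\ge 3$ vertices and no multiple edges, the other has two vertices and multiple edges). So it suffices to prove the three ``$\Longleftarrow$'' implications.

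Next I would record the key local fact, obtained by inspecting the five graphs of Figure~\ref{small3dfaces}: for three distinct edges $e_1,e_2,e_3$ at a vertex the unordered profile of the types of $F(e_i,e_j)$ is one of $\{T,T,T\}$ ($\Delta^3$), $\{B,B,B\}$ ($\Sigma^3$), $\{T,S,S\}$ ($\Delta^2\times I$), $\{B,S,S\}$ ($\Sigma^2\times I$), $\{S,S,S\}$ ($I^3$), where $T,B,S$ abbreviate triangle, biangle, square. Two consequences: (i) no vertex has both a triangle-pair and a biangle-pair of edges; and (ii) ``$e=e'$ or $F(e,e')$ is not a square'' is an equivalence relation on the edge set $E_x$ (transitivity: two non-square pairs force a profile with $\ge 2$ non-square entries, hence $\{T,T,T\}$ or $\{B,B,B\}$), and on each class of size $\ge 2$ all $F(e,e')$ have the same type. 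Call these classes the \emph{blocks} at $x$; then two distinct edges at $x$ span a square iff they lie in different blocks, a triangle iff they lie in a common ``triangle-block'', a biangle iff they lie in a common ``biangle-block''.

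The heart of the proof is the following claim: \emph{every simplex at $x$ is $\nabla$-invariant; more precisely, a $\Delta^k$-subgraph has any two of its edges at a common vertex spanning a triangle, and a $\Sigma^k$-subgraph has any two of its edges at a common vertex spanning a biangle.} Granting this, the structure is clean. A $\Sigma^k$-simplex at $x$ has all its edges at $x$ running to one common second vertex $a$, so they form a biangle-block $B$ and the simplex is $\{x,a\}\cup B$. A $\Delta^k$-simplex at $x$ has all its edge-pairs at $x$ spanning triangles, so they form a triangle-block $B=\{e_1,\dots,e_k\}$, and with $a_i=t(e_i)$ one checks that $S_B:=\bigcup_{i<j}F(e_i,e_j)$ is a $\nabla$-invariant subgraph $\cong\Delta^k$ (the edge $f_{ij}$ of $\Gamma$ joining $a_i$ to $a_j$ inside $F(e_i,e_j)$ is unique by (i), parallel transport along $F(e_i,e_j)$ sends $e_j\mapsto f_{ij}$, $\bar e_i\mapsto\bar e_j$, and at $a_i$ the $\Delta^3$ through $\bar e_i,f_{ij},f_{il}$ sends $f_{il}\mapsto f_{jl}$), and the $\Delta^k$-simplex lies inside $S_B$. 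Since a block is a maximal pairwise-triangle (resp. pairwise-biangle) set of edges at $x$, the maximal simplices at $x$ are exactly the subgraphs $S_B$. Feeding this back into Step 2: $e,e'$ lie in a common maximal $\Delta^k$-simplex $\iff$ they lie in a common triangle-block $\iff$ $F(e,e')$ is a triangle, and symmetrically for $\Sigma^k$/biangle and for ``no common maximal simplex''/square — which is exactly (1)--(3).

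The step I expect to be the main obstacle is the emphasized claim, i.e.\ that a $\Delta^k$- or $\Sigma^k$-subgraph is automatically assembled from $2$-faces of $\Gamma$. The $\Sigma^k$ case is easy: two parallel edges between $x$ and a vertex $v$ automatically span a biangle, since the $2$-face through them, having $v$ as its only other vertex, must be $\Sigma^2$. The $\Delta^k$ case is the real work; by induction on $k$ (and using (i) to pass from $k$ to $k-1$) it comes down to the base case that a triangle \emph{subgraph} $\{v_0,v_1,v_2\}$ is $\nabla$-invariant, i.e.\ that one cannot have a triangle subgraph all of whose edge-pairs at vertices span squares. Here consequence (i) already forces any such ``bad'' triangle to have \emph{all} its edge-pairs be squares, and one then derives a contradiction from the rigidity of the $I^3$-faces forced by Step 2 together with finiteness of $\Gamma$ — this is where the five allowed $3$-dimensional face types, rather than mere bookkeeping at $x$, must be used, and it is the part of the argument that requires the most care.
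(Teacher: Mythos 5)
Your overall reduction is sensible: (a) two parallel edges automatically span a biangle, and the whole lemma then hinges on (b) the claim that two edges $e\colon x\to a$, $e'\colon x\to b$ whose endpoints are joined by an edge $f$ of $\Gamma$ must span a triangle (equivalently, a triangle \emph{subgraph} cannot have its edge-pairs spanning squares). But (b), which you yourself call the heart of the argument, is precisely what you do not prove: you only gesture at ``rigidity of the $I^3$-faces \dots together with finiteness of $\Gamma$''. That is a genuine gap, and the proposed mechanism is not the right one — no global or finiteness argument is needed. If $e,e'$ spanned a square with vertices $x,a,d,b$ and $a,b$ were joined by an edge $f$, consider the unique three-dimensional face $G$ spanned at $a$ by the three distinct edges $\bar{e}$, the square edge $a\to d$, and $f$ (Definition \ref{sec:gkm-graphs-6}(1)). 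Being $\nabla$-invariant, $G$ contains the two-face spanned by $\bar{e}$ and $a\to d$, which by uniqueness of spanned two-faces is the square itself; so $G$ has a square two-face and hence is of type $\Delta^2\times I$, $\Sigma^2\times I$ or $I^3$. In none of these does an edge of the face join two \emph{opposite} vertices of a square two-face, yet $f\in G$ joins the opposite corners $a$ and $b$ — contradiction. (The paper states the lemma without proof, so the expected argument is exactly this kind of short case check against the five types, in the spirit of the proofs of Lemmas \ref{sec:gkm-graphs-4}--\ref{sec:gkm-graphs-their}; with (b) established, your block structure and the three ``$\Longleftarrow$'' implications do give the lemma.)

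A secondary error: your consequence (i), ``no vertex has both a triangle-pair and a biangle-pair of edges'', is false as stated — a vertex of the vertex-edge graph of $\Delta^2\times\Sigma^2$ has both. What the profile list actually yields is the weaker statement that no single \emph{triple} of edges at a vertex produces both a triangle and a biangle among the three two-faces it spans. The places where you invoke (i) (uniqueness of the edge $f_{ij}$ joining $a_i$ and $a_j$, and the induction step in the $\Delta^k$ case) survive with this corrected triple-wise version, but they need to be rephrased accordingly.
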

\begin{proof}
  By definition, all two dimensional faces of \(\Gamma\)  have at most four vertices.
  Therefore the lemma follows from the definition of the local factors, because two-dimensional faces of \(\Delta^k\) and \(\Sigma^k\) are triangles and biangles, respectively.
\end{proof}

\begin{lemma}
\label{sec:gkm-graphs-4}
Let $\Gamma$ be a graph with small three-dimensional faces. Then the local factors at \(x\in V(\Gamma)\) are partitioning \(E_x=G_1\amalg\dots\amalg G_{n_x}\) in such a way that each \(G_i\) contains the edges which span a local factor at \(x\).
\end{lemma}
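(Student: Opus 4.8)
\textbf{Proof proposal for Lemma \ref{sec:gkm-graphs-4}.}

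The plan is to define, for each vertex $x$, an equivalence relation on $E_x$ and show that its classes are exactly the edge sets of the maximal simplices at $x$. Declare two edges $e, e' \in E_x$ equivalent if either $e = e'$ or $e$ and $e'$ span a triangle or a biangle (equivalently, by Lemma \ref{sec:gkm-graphs-3}, if they lie in a common maximal simplex). The heart of the argument is to prove this is transitive: if $e, e'$ span a $2$-simplex-type face and $e', e''$ span a $2$-simplex-type face, then $e, e''$ also do. Once transitivity is established, the equivalence classes $G_1, \dots, G_{n_x}$ form a partition of $E_x$, and it remains to identify each class with (the edge set at $x$ of) a maximal simplex.

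For transitivity I would argue locally, using that $\Gamma$ has small three-dimensional faces. The three edges $e, e', e''$ at $x$ span a unique three-dimensional face $F$, by condition (1) in Definition \ref{sec:gkm-graphs-6}; the hypothesis that $\{e,e'\}$ and $\{e',e''\}$ each span a triangle or biangle restricts the combinatorial type of $F$, since in $I^3$ no two of the three edges at a vertex span a triangle or biangle, and in $\Delta^2\times I$ or $\Sigma^2\times I$ at most two of the three edges at a vertex do (the two lying in the $\Delta^2$- resp.\ $\Sigma^2$-factor), and those two are precisely the pair spanning the triangle resp.\ biangle, never overlapping in the way the hypothesis requires unless $\{e,e',e''\}$ are all three in that factor. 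Hence $F$ must be $\Delta^3$ or $\Sigma^3$. In $\Delta^3$ every pair of edges at a vertex spans a triangle, and in $\Sigma^3$ every pair spans a biangle; in either case $e$ and $e''$ span a $2$-face of the required type. Moreover, the two cases cannot mix: if $\{e,e'\}$ spans a triangle then $\nabla$-invariance and Lemma \ref{sec:gkm-graphs-3} force any common three-dimensional face to be $\Delta$-type, so $\{e',e''\}$ spanning a biangle is impossible — this shows the relation also respects the type (triangle vs.\ biangle), so each class is ``of one kind''.

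Having the partition $E_x = G_1 \amalg \cdots \amalg G_{n_x}$ into classes each of a single kind, I would then check that each $G_i$ is the set of edges at $x$ of a unique maximal simplex. Given a class $G_i$ of, say, triangle-type, pick any $e \in G_i$; by Lemma \ref{sec:gkm-graphs-3} the maximal simplex $\Gamma'$ of type $\Delta^k$ containing $e$ at $x$ has its edge set at $x$ equal to $\{e' \in E_x : e' \text{ spans a triangle with } e\} = G_i$. (For this one uses that a maximal $\Delta^k$ at $x$ consists precisely of all edges pairwise spanning triangles with $e$, which follows from Lemma \ref{sec:gkm-graphs-3}(1) together with transitivity just proved; a maximal simplex of type $\Sigma^k$ necessarily has $k=1$ by its combinatorial definition, since $\Sigma^k$ for $k\ge 2$ is not a graph with vertices of valence matching, and is handled the same way with biangles.) Different classes give different maximal simplices since the classes are disjoint, and conversely every maximal simplex at $x$ contributes a nonempty class. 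The main obstacle is the transitivity step, i.e.\ the careful case analysis of which pairs of edges at a vertex of each of the five allowed three-dimensional face types span a triangle or biangle; everything else is bookkeeping on top of Lemma \ref{sec:gkm-graphs-3}.
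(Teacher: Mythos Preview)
Your proof is correct and follows essentially the same route as the paper: define the relation ``span a triangle or biangle'' on $E_x$, prove transitivity by looking at the unique three-dimensional face through $e,e',e''$ and using the list in Definition~\ref{sec:gkm-graphs-6} to force it to be $\Delta^3$ or $\Sigma^3$, then invoke Lemma~\ref{sec:gkm-graphs-3}. One small correction: your parenthetical that ``a maximal simplex of type $\Sigma^k$ necessarily has $k=1$'' is wrong --- $\Sigma^k$ is the $k$-valent graph on two vertices with $k$ parallel edges, and maximal simplices of type $\Sigma^k$ with $k\geq 2$ do occur; the biangle case is handled exactly symmetrically to the triangle case, not by reduction to $k=1$.
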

\begin{proof}
  We have to show that belonging to the same local factor is an equivalence relation on \(E_x\).
  We only have to show transitivity.
  
  Let \(e,e',e''\in E_x\) be such that \(e\) and \(e'\) belong to one local factor \(\sigma_i\) and \(e'\) and \(e''\) belong to another local factor.
Then, by Lemma~\ref{sec:gkm-graphs-3}, \(e\) and \(e'\) and \(e'\) and \(e''\) span a triangle or a biangle, respectively.

Consider the three-dimensional face \(F\)  of \(\Gamma\) spanned by \(e,e',e''\). It has one of the combinatorial types described in Definition~\ref{sec:gkm-graphs-6}.
Since none of the faces spanned by $e,e'$ and $e',e''$ are squares, it follows that \(F\) has the combinatorial type of \(\Delta^3\) or \(\Sigma^3\).
Hence, we have shown transitivity and the claim follows.
\end{proof}

\begin{lemma}
\label{sec:gkm-graphs-2}
Let \(\Gamma\) be a graph with small three-dimensional faces and let \(e\) be an oriented edge of \(\Gamma\).
Then the connection \(\nabla_e:E_{i(e)}\rightarrow E_{t(e)}\) preserves the partitions \(E_{i(e)}=G_1\amalg\dots\amalg G_{n_{i(e)}}\) and \(E_{t(e)}=G'_1\amalg\dots\amalg G'_{n_{t(e)}}\).
Moreover, the combinatorial types of the local factors spanned by \(G_i\) and \(\nabla_e(G_i)\) are the same.
\end{lemma}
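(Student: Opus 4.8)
The plan is to deduce the lemma from Lemma~\ref{sec:gkm-graphs-3}, which reads off the block decomposition of an edge-set purely from pairwise incidence data. Write $x=i(e)$, $y=t(e)$. I claim it is enough to prove the following: for all $e_1,e_2\in E_x$, the two-dimensional face of $\Gamma$ spanned by $e_1,e_2$ and the two-dimensional face spanned by $\nabla_e(e_1),\nabla_e(e_2)$ have the same combinatorial type (biangle, triangle, or square). Granting this, recall from Lemma~\ref{sec:gkm-graphs-3} that two edges at a vertex lie in a common maximal simplex of type $\Delta$ (resp.\ $\Sigma$) exactly when they span a triangle (resp.\ a biangle), and in no common maximal simplex exactly when they span a square; hence $\nabla_e$ carries the partition $E_{i(e)}=G_1\amalg\dots\amalg G_{n_{i(e)}}$ into the partition $E_{t(e)}=G'_1\amalg\dots\amalg G'_{n_{t(e)}}$, and the same applied to $\bar e$ (via $\nabla_{\bar e}=\nabla_e^{-1}$) makes it a bijection of the two partitions. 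For the final clause, a block has size $k$ exactly when the maximal simplex it spans is $k$-valent, hence of type $\Delta^k$ or $\Sigma^k$, and this type is $\Delta$ iff every pair of its edges spans a triangle; since $\nabla_e$ is a size-preserving bijection carrying, by the claim, triangle-pairs to triangle-pairs and biangle-pairs to biangle-pairs, the maximal simplices spanned by $G_i$ and $\nabla_e(G_i)$ have the same combinatorial type.

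To fix the meaning of the claim I note that the two-dimensional face spanned by two edges $f_1,f_2$ at a vertex is well defined: it is the smallest $\nabla$-invariant subgraph of $\Gamma$ containing $f_1,f_2$. If $\Gamma$ is $2$-valent the lemma is trivial, so I may assume valence at least $3$; then, choosing any third edge $f_3$ at the common vertex and letting $H$ be the unique three-dimensional face through $f_1,f_2,f_3$ (Definition~\ref{sec:gkm-graphs-6}(1)), this subgraph is contained in $H$, hence is a facet of one of the five polytopes of Definition~\ref{sec:gkm-graphs-6}(2), so it is $2$-valent and of one of the three named types, in agreement with Lemma~\ref{sec:gkm-graphs-3}.

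Now I would split into cases. If $e_1=e_2$ there is nothing to prove. If $e$ is one of $e_1,e_2$, say $e_1=e$, let $G$ be the two-dimensional face through $e$ and $e_2$; since $e\in E(G)$, the $\nabla$-invariance of $G$ applied to the edge $e$ shows that $\nabla_e$ takes the two edges $e,e_2$ of $G$ at $x$ to the two edges $\bar e=\nabla_e(e)$ and $\nabla_e(e_2)$ of $G$ at $y$, so $\nabla_e(e_1)$ and $\nabla_e(e_2)$ again span $G$. Finally suppose $e,e_1,e_2$ are pairwise distinct, and let $H$ be the unique three-dimensional face containing them (Definition~\ref{sec:gkm-graphs-6}(1)); it is one of the five polytopes, and as $H$ is $\nabla$-invariant, $\nabla_e$ maps the three edges $e,e_1,e_2$ at $x$ onto $\bar e,\nabla_e(e_1),\nabla_e(e_2)$ at $y$. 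Since both two-dimensional faces in question are smallest $\nabla$-invariant subgraphs containing edges of $H$, they lie in $H$; in the polytopal picture of $H$ they are, respectively, the unique facet of $H$ at $x$ avoiding $e$ and the unique facet of $H$ at $y$ avoiding $e$ (exactly two of the three facets of $H$ at a given endpoint of $e$ contain $e$). So the claim reduces to the following check, for each of the five combinatorial types of $H$ and each of its edges $e$: these two facets have the same combinatorial type. For $\Delta^3$, $\Sigma^3$, and $I^3$ all facets are triangles, biangles, squares respectively; for the prisms $\Delta^2\times I$ and $\Sigma^2\times I$ the two facets are both triangles (resp.\ biangles) when $e$ runs in the $I$-direction and both squares otherwise.

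The only substantive point beyond bookkeeping is this last verification on the five polytopes, which is a short elementary case analysis (the two prisms being the only cases requiring any thought), so I do not expect a genuine obstacle. The place where care is needed is the pairwise-distinct case: because $\nabla_e$ is defined only at $x$, one must pass to the three-dimensional face $H$ in order to track how it moves $e_1$ and $e_2$, rather than trying to transport the two-dimensional face through $e_1,e_2$ directly.
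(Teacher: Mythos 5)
Your proposal is correct and follows essentially the same route as the paper: reduce, via Lemma \ref{sec:gkm-graphs-3}, to showing that $\nabla_e$ preserves the combinatorial type of the two-dimensional face spanned by a pair of edges, and verify this inside the unique three-dimensional face through $e,e_1,e_2$ using the list of five types in Definition \ref{sec:gkm-graphs-6}. You merely spell out more explicitly the degenerate cases ($e_1=e_2$, $e\in\{e_1,e_2\}$) and the facet-by-facet check on the five polytopes, which the paper leaves implicit.
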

\begin{proof}
  Let \(e',e''\) be other edges of \(\Gamma\) emanating from \(i(e)\).
  By Lemma~\ref{sec:gkm-graphs-3}, we have to show that the two-dimensional faces spanned by any choice of two edges \(e_1,e_2\) from \(e,e',e''\), respectively, has the same combinatorial types as the two-dimensional face spanned by \(\nabla_e(e_1),\nabla_e(e_2)\).

  To do so, we consider the three-dimensional face of \(\Gamma\) spanned by \(e,e',e''\). It contains the two-dimensional faces spanned by \(e,e'\) and \(e,e''\), respectively.
  Moreover, these two-dimensional faces are also spanned by \(e,\nabla_e(e')\) and \(e,\nabla_e(e'')\), respectively.

The three-dimensional face also contains the face spanned by \(e'\) and \(e''\).
  By an inspection of the list of three-dimensional faces in Definition \ref{sec:gkm-graphs-6}, it follows that the two-dimensional faces spanned by \(e',e''\) and \(\nabla_e(e'),\nabla_e(e'')\), respectively, have the same combinatorial types.
  Hence the lemma is proved.
\end{proof}

\begin{lemma}
  \label{sec:gkm-graphs-their}
Let \(\Gamma\) be a graph with small three-dimensional faces. Let \(e, e'\) and \(f\) be edges in \(\Gamma\) with the same initial point.
\begin{enumerate}
\item If \(e\) and \(e'\) span a biangle, then \(\nabla_ef=\nabla_{e'}f\).
\item If \(e\) and \(e'\) span a square, then \(\nabla_{e_1'}\nabla_{e}f=\nabla_{e_1}\nabla_{e'}f\), where \(e_1,e_1'\) are the edges opposite to \(e\) and \(e'\), respectively, in the square spanned by \(e\) and \(e'\).
\end{enumerate}
\end{lemma}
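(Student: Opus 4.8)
The plan is to localise the entire statement to the three-dimensional face of $\Gamma$ spanned by $e$, $e'$ and $f$, and to use that the connection on such a face is completely rigid. The key preliminary observation is the following: if $B\subseteq\Gamma$ is any two-dimensional face, i.e.\ a connected $2$-valent $\nabla$-invariant subgraph, then $B$ has exactly two edges at each of its vertices, and since $B$ is $\nabla$-invariant with $\nabla_a(a)=\bar a$ for $a\in E(B)$, the bijection $\nabla_a$ must send the edge of $B$ at $i(a)$ other than $a$ to the edge of $B$ at $t(a)$ other than $\bar a$. Hence, if $F$ is a three-dimensional face of $\Gamma$ and $a,b$ are distinct edges of $F$ emanating from a common vertex, then $\nabla_a(b)$ is forced: it is the unique edge, distinct from $\bar a$, of the two-dimensional sub-face of $F$ containing $a$ and $b$. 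In particular $\nabla|_F$ depends only on the combinatorial type of $F$ and agrees with the connection of the corresponding torus graph. With this in hand, both parts reduce to a finite check on the five model faces of Definition~\ref{sec:gkm-graphs-6}.

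For part (1): if $f\in\{e,e'\}$, the claim follows at once by applying the observation to the biangle $B$ through $e$ and $e'$ (both sides are $\bar e$, resp.\ $\bar{e'}$). If $f\notin\{e,e'\}$, then $e,e',f$ are distinct and span a unique three-dimensional face $F$ by Definition~\ref{sec:gkm-graphs-6}; since $F$ contains a biangle (through $e$ and $e'$), the classification of three-dimensional faces forces $F\cong\Sigma^3$ or $F\cong\Sigma^2\times I$. If $F\cong\Sigma^3$, then $e,e',f$ are all of the edges of $F$ at $i(e)$ and each joins $i(e)$ to the unique other vertex of $F$; applying the observation to the biangle through $e$ and $f$ (resp.\ through $e'$ and $f$) gives $\nabla_ef=\bar f=\nabla_{e'}f$. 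If $F\cong\Sigma^2\times I$, then $e$ and $e'$ are the two edges of the biangle $\Sigma^2\times\{0\}$ at $i(e)$ while $f$ is the interval edge there, so $t(e)=t(e')$; applying the observation to the two square sub-faces through $e,f$ and through $e',f$ shows that $\nabla_ef$ and $\nabla_{e'}f$ are both equal to the interval edge of $F$ at $t(e)$.

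For part (2): if $f\in\{e,e'\}$, a direct computation inside the square $Q$ spanned by $e$ and $e'$ — tracing $\nabla$ once around the four edges of $Q$ with the help of the observation — shows that both sides equal $\overline{e_1}$. If $f\notin\{e,e'\}$, then $F:=F(e,e',f)$ contains the square $Q$, so Definition~\ref{sec:gkm-graphs-6} forces $F\cong I^3$, $F\cong\Delta^2\times I$ or $F\cong\Sigma^2\times I$. In each case $\nabla|_F$ is the standard connection, and the claimed identity asserts precisely that transporting $f$ around $Q$ to the vertex opposite $i(e)$ — first along $e$ and then along $e_1'$ — gives the same edge as transporting it first along $e'$ and then along $e_1$; this is verified directly in each of the three models (equivalently, it is the flatness of the torus-graph connection around a two-dimensional face of $I^3$, $\Delta^2\times I$ or $\Sigma^2\times I$).

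The only genuine content is the rigidity observation in the first step; everything after it is a bounded, mechanical verification on finitely many small graphs. The one point requiring care is to orient the opposite edges $e_1$ and $e_1'$ so that, in every square, the compositions $\nabla_{e_1'}\circ\nabla_e$ and $\nabla_{e_1}\circ\nabla_{e'}$ are both well defined and land among the edges at the vertex of $Q$ opposite $i(e)$.
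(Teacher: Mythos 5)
Your proof is correct and takes essentially the same route as the paper: both arguments localise to the unique three-dimensional face spanned by \(e,e',f\), invoke the classification of such faces from Definition~\ref{sec:gkm-graphs-6}, and use that the connection restricted to a two-dimensional face is forced (a rigidity the paper uses implicitly rather than stating). The only blemish is harmless: in the degenerate case \(f=e'\) of part (2) both sides equal \(\overline{e_1'}\) rather than \(\overline{e_1}\), but the asserted equality still holds.
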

\begin{proof}
To see the first claim, one has to show that if \(e,e'\) span a biangle \(\Sigma^2\) and \(f\) is an edge with the same initial point as \(e\) and \(e'\) then
\begin{equation*}
  \nabla_{e}f=\nabla_{e'}f.
\end{equation*}
To see this, first assume that \(e\) and \(f\) span a biangle. Then by Definition \ref{sec:gkm-graphs-6}, \(e\), \(e'\), \(f\) span a face of \(\Gamma\) which is combinatorially equivalent to \(\Sigma^3\). Hence, \(\nabla_{e}f=\bar{f}=\nabla_{e'}f\) follows.

Next assume that \(e\) and \(f\) span a square. Then, by Definition \ref{sec:gkm-graphs-6}, \(e\), \(e'\), and \(f\) span a face with the combinatorial type of \(\Sigma^2\times I\).
Therefore  \(\nabla_{e}f=g=\nabla_{e'}f\), where \(g\) is the third edge at \(t(e)=t(e')\).
Hence the claim follows in this case.

By Definition \ref{sec:gkm-graphs-6}, the case that \(e\) and \(f\) span a triangle does not occur. So the claim follows.

The second claim follows in a similar way, again by considering the three-dimensional faces of \(\Gamma\).  
\end{proof}

The following theorem states that any graph with small three-dimensional faces is covered by a product of \(\Delta^k\)s and \(\Sigma^k\)s. Here by a covering of a graph by another graph we mean the following: We consider graphs as one-dimensional CW-complexes, and coverings should be cellular. Note that the graphs we consider are $n$-valent, for some $n\geq 1$; for $n\neq 2$ a covering of $n$-valent graphs is automatically cellular, because in this case the neighbourhoods of points in the interior of a one-cell and the neighbourhoods of the vertices are not homeomorphic.

\begin{theorem}\label{thm:covering}
Let $\Gamma$ be a graph with small three-dimensional faces, $x\in V(\Gamma)$, and $\sigma_1,\ldots,\sigma_k$ the local factors at $x$. Consider the product graph $\tilde\Gamma:=\prod_{i=1}^k \sigma_i$, equipped with its natural connection $\tilde\nabla$. Let $x_0\in \tilde\Gamma$ be a base point, and $f:E(\tilde\Gamma)_{x_0}\to E(\Gamma)_x$ a bijection sending the edges of a local factor to the edges of a local factor. Then there exists a unique covering $\pi:\tilde\Gamma\to \Gamma$ extending $f$ that is compatible with the connections, i.e., which satisfies $\nabla_{\pi(e)} \circ \pi = \pi\circ \tilde\nabla_e$ for all edges $e\in E(\tilde\Gamma)$.
\end{theorem}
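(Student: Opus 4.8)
The plan is to construct the covering $\pi:\tilde\Gamma\to\Gamma$ by a path-lifting argument: one extends the prescribed bijection $f$ on $E(\tilde\Gamma)_{x_0}$ along edges of $\tilde\Gamma$ using the connections, and then checks that the result is well-defined, i.e., independent of the chosen path. First I would set up the induction. Since $\tilde\Gamma=\prod_i\sigma_i$ is connected, every vertex is reached from $x_0$ by an edge-path; I want to define $\pi$ on vertices and on edge-germs simultaneously. Concretely, having defined $\pi(y)\in V(\Gamma)$ together with a bijection $f_y:E(\tilde\Gamma)_y\to E(\Gamma)_{\pi(y)}$ sending maximal simplices to maximal simplices, for each edge $e\in E(\tilde\Gamma)_y$ I set $\pi(e):=f_y(e)$, declare $\pi(t(e))=t(f_y(e))$, and define $f_{t(e)}:=\nabla_{f_y(e)}\circ f_y\circ \tilde\nabla_e^{-1}$. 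By the connection axioms this sends $\bar e$ back to $\overline{f_y(e)}$, so it is consistent along a single edge and its reverse, and by Lemma~\ref{sec:gkm-graphs-2} it again carries the partition into maximal simplices to the corresponding partition with matching combinatorial types; this keeps the induction hypothesis alive. The compatibility equation $\nabla_{\pi(e)}\circ\pi=\pi\circ\tilde\nabla_e$ holds by construction at every edge once $\pi$ is known to be well-defined.

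The main obstacle, as always with such lifting constructions, is \emph{well-definedness}: the value $f_y$ must not depend on the edge-path from $x_0$ to $y$. Since $\tilde\Gamma$ is a product of simplices $\Delta^{n_i}$ and $\Sigma^{m_i}$ — graphs whose vertex-edge graphs are simply connected as CW-complexes once one collapses the obvious $2$-cells, i.e.\ whose fundamental groupoid of edge-paths is generated by the "square" and "triangle/biangle" relations — it suffices to show that $f_y$ is unchanged when one replaces a subpath by a homotopic one across a $2$-dimensional face of $\tilde\Gamma$. There are exactly three local moves to check, corresponding to the three face types that occur in a product of $\sigma_i$'s: going around a biangle, going around a triangle, and going around a square. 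The biangle and square cases are precisely Lemma~\ref{sec:gkm-graphs-their}(1) and (2): if in $\tilde\Gamma$ two edges $e,e'$ at $y$ span a biangle then $\tilde\nabla_e$ and $\tilde\nabla_{e'}$ agree on the remaining edges (and the same holds downstairs for $f_y(e),f_y(e')$, which span a biangle in $\Gamma$ by the simplex-preserving property), so the two ways of transporting $f_y$ around the biangle coincide; the square case is the commuting-square identity $\nabla_{e_1'}\nabla_e=\nabla_{e_1}\nabla_{e'}$ both in $\tilde\Gamma$ and in $\Gamma$. For the triangle move inside a maximal simplex $\sigma_i\cong\Delta^{n_i}$, one checks directly that transporting the frame around a $2$-face of the simplex is trivial; here the key point is that $f$ (hence each $f_y$) maps the edges of $\sigma_i$ onto the edges of a \emph{single} maximal simplex of type $\Delta^{n_i}$ in $\Gamma$, and inside such a simplex the connection is the standard one, for which going around a triangular face of $\Delta^{n_i}$ is the identity on edge-germs. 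Thus each generating relation of the edge-path groupoid of $\tilde\Gamma$ is respected, and $f_y$ is well-defined.

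Granting well-definedness, the remaining points are routine. The map $\pi$ is a covering of CW-complexes: it is surjective because $\Gamma$ is connected and the image is open and closed (at each vertex $\pi$ is locally bijective on edge-germs, by the induction hypothesis $f_y$ is a bijection); and it is a local homeomorphism of $1$-complexes because $f_y$ is a bijection $E(\tilde\Gamma)_y\to E(\Gamma)_{\pi(y)}$ for every $y$, which on the level of CW-structures says exactly that $\pi$ restricts to a homeomorphism of a small neighbourhood of each vertex and of each open edge. Compatibility with the connections holds by the defining formula for $f_{t(e)}$. Finally, uniqueness: any covering $\pi'$ extending $f$ and compatible with the connections must satisfy $\pi'(t(e))=t(\pi'(e))$ and $\pi'\circ\tilde\nabla_e=\nabla_{\pi'(e)}\circ\pi'$, which forces $\pi'=\pi$ along every edge-path out of $x_0$, hence everywhere by connectedness of $\tilde\Gamma$.
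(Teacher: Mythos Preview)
Your overall strategy --- transport the frame $f$ along edge-paths using the connections, and verify well-definedness on a generating set of $2$-cell relations (biangles, squares, triangles) for the edge-path groupoid of $\tilde\Gamma$ --- is sound, and the biangle and square moves are exactly Lemma~\ref{sec:gkm-graphs-their}. But the triangle case is misargued. You assert that in the standard connection on $\Delta^{n_i}$, ``going around a triangular face \ldots\ is the identity on edge-germs.'' This is false: if $e_1,e_2\in E_y$ are the two edges of the triangle at its base vertex, the holonomy of the loop $e_1*\tilde\nabla_{e_1}(e_2)*\bar e_2$ is the transposition $e_1\leftrightarrow e_2$ (and the identity on every other edge at $y$). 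One sees this already in $\Delta^2$, where the holonomy around the unique triangle swaps the two edges at each vertex.

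The repair is easy once you state the correct goal. Well-definedness does not require $\tilde\nabla_\gamma=\id$; it requires $f_y\circ\tilde\nabla_\gamma=\nabla_{\pi(\gamma)}\circ f_y$. Since $f_y$ preserves the partition into maximal simplices (Lemma~\ref{sec:gkm-graphs-2}), the pair $f_y(e_1),f_y(e_2)$ lies in a single maximal $\Delta$-simplex at $\pi(y)$, hence spans a triangle in $\Gamma$ by Lemma~\ref{sec:gkm-graphs-3}, and the image loop $\pi(\gamma)$ is the boundary of that triangle. For any third edge $e'\in E_{\pi(y)}$, the three-dimensional face spanned by $f_y(e_1),f_y(e_2),e'$ is $\Delta^3$ or $\Delta^2\times I$ (the only types containing a triangle), and in either case the triangle holonomy fixes $e'$; on $f_y(e_1),f_y(e_2)$ it is again the transposition. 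Thus both holonomies are the transposition of the two triangle edges and $f_y$ intertwines them. With this correction your argument goes through.

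For comparison, the paper avoids the triangle holonomy altogether by building $\pi$ inductively along \emph{shortest} paths in $\tilde\Gamma$: such paths use at most one edge per factor, so only the square and biangle relations (Lemma~\ref{sec:gkm-graphs-their}) enter the well-definedness check. The remaining edges --- those in a $\Delta$-factor not lying on any shortest path to $x_0$ --- are then treated by a separate end-game argument. Your approach is more uniform, but the price is that you must handle the nontrivial triangle holonomy correctly.
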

\begin{proof} The compatibility condition $\nabla_{\pi(e)} \circ \pi = \pi\circ \tilde\nabla_e$ shows that if $e,e'$ are edges meeting at some vertex $v$, and $\pi(e)$ and $\pi(e')$ are given, then $\pi(\nabla_e e')$ is uniquely determined. This implies the uniqueness of $\pi$.

  We have to show the existence of $\pi$. Note that for any path $\gamma$ in $\tilde\Gamma$, say from \(x_0\) to \(y\), the connection \(\tilde{\nabla}\) on \(\tilde\Gamma\) induces a bijection
  
\[\tilde{\nabla}_{\gamma}:E(\tilde\Gamma)_{x_0}\rightarrow E(\tilde\Gamma)_{y}\quad \tilde{\nabla}_{\gamma}= \tilde{\nabla}_{e_m}\circ \dots \circ  \tilde{\nabla}_{e_1},\]
where \(\gamma=e_1*\dots*e_m\).
Similarly, for paths in \(\Gamma\) from \(x\) to \(y\) there is a bijection
\[\nabla_{\gamma}:E(\Gamma)_{x}\rightarrow E(\Gamma)_{y}\]
induced by \(\nabla\).

For $n\geq 0$, we let $\tilde\Gamma_n$ be the subgraph of $\tilde\Gamma$ whose vertices are those that have distance at most $n$ to $x_0$, and whose edges are all the edges of $\Gamma$ connecting two such vertices. We prove by induction that we can construct a map of graphs $\pi:\tilde\Gamma_n\to \Gamma$ extending $f$ that satisfies
\[
\nabla_{\pi(\gamma)} \circ \pi (e)= \pi \circ \tilde\nabla_\gamma e
\]
for all shortest paths $\gamma$ in $\tilde\Gamma_n$ starting at $x_0$, and all edges $e$ of $\tilde\Gamma_n$ with $i(e) = i(\gamma)$. For $n=0$ there is nothing to do. For \(n=1\), the existence of \(\pi\) is guaranteed by the fact that \(f\) sends local factors to local factors.

We assume that $\pi:\tilde\Gamma_{n-1}\to \Gamma$ is already constructed, and wish to construct $\pi:\tilde\Gamma_{n}\to \Gamma$. Let $e$ be an edge of $\tilde\Gamma_n$ which is not an edge of $\tilde\Gamma_{n-1}$, but whose initial vertex $i(e)$ is a vertex of $\tilde\Gamma_{n-1}$, and choose a shortest path $\gamma$ from $x_0$ to $i(e)$. Note that $\gamma$ is a path in $\tilde\Gamma_{n-1}$. We want to define \(\pi(e)=\nabla_{\pi(\gamma)}\pi(\tilde{\nabla}_\gamma^{-1} e)\); in order to do so we have to show that this definition is independent of the choice of $\gamma$.

If \(x',y'\) are vertices of \(\tilde\Gamma\), then the shortest paths between \(x'\) and \(y'\) are of the following form:

\begin{equation*}
  e_{i_1}*\dots * e_{i_l},
\end{equation*}
where

\begin{itemize}
\item  \(e_{i_j}\) is an edge tangent to \(\sigma_{i_j}\)
\item the \(i_j\) are unique up to ordering and \(i_j\neq i_{j'}\) if \(j\neq j'\)
\item If \(\sigma_{i_j}=\Delta^k\), then \(e_{i_j}\) is unique
\item If \(\sigma_{i_j}=\Sigma^k\), \(k\geq 2\), then by Lemma \ref{sec:gkm-graphs-their}, \(\nabla_{e_{i_j}}\) does not depend on the choice of edge in \(\sigma_{i_j}\). 
\end{itemize}

By Lemma~\ref{sec:gkm-graphs-their}, we have
\begin{equation*}
  \tilde{\nabla}_{e_{i_j}}\circ \tilde{\nabla}_{e_{i_{j+1}}} = \tilde{\nabla}_{e_{i_{j+1}}'}\circ \tilde{\nabla}_{e_{i_j}'},
\end{equation*}
and
\begin{equation*}
  \nabla_{\pi(e_{i_j})}\circ \nabla_{\pi(e_{i_{j+1}})} = {\nabla}_{\pi(e_{i_{j+1}}')}\circ {\nabla}_{e_{i_j}'},
\end{equation*}
where \(e_{i_j}'\) and \(e_{i_{j+1}}'\) are the opposite edges to \(e_{i_j}\) and \(e_{i_{j+1}}\), respectively, in the square spanned by \(e_{i_j}\) and \(e_{i_{j+1}}\). Hence \(\tilde{\nabla}_\gamma\) and \(\nabla_{\pi(\gamma)}\) do not depend on the chosen shortest path from $x_0$ to $i(e)$.

Next, we show that for two such edges $e,e'$ with initial point in $\tilde\Gamma_{n-1}$ and same end point $t(e) = t(e')$, which is not a vertex of $\tilde\Gamma_{n-1}$, we have $\pi(t(e)) = \pi(t(e'))$. To do so let \(\gamma\) and \(\gamma'\) be two minimising curves from \(x_0\) to \(i(e)\) and \(i(e')\), respectively.
Then
\begin{align*}
  \gamma&*e&\gamma'&*e'
\end{align*}
are minimising curves from \(x_0\) to \(t(e)\).
Therefore they coincide up to ordering of the edges (replacing edges by parallel edges) and possible disambiguity with multiple edges.

At first assume that \(i(e)=i(e')\); then \(e\) and \(e'\) span a biangle. Moreover, by the induction hypothesis, we can assume that \(\gamma=\gamma'\).
Hence, it follows from Lemma~\ref{sec:gkm-graphs-2} that \(\pi(e)\) and \(\pi(e')\) have the same end points.

Next assume that \(i(e)\neq i(e')\). Then we may assume
\begin{align*}
  \gamma&=\gamma_1*\tilde{e}'& \gamma'&=\gamma_1*\tilde{e},
\end{align*}
where \(\gamma_1\) is some minimising path and \(\tilde{e}'\), \(\tilde{e}\) are parallel to \(e'\) and \(e\) respectively.
Hence \(\tilde{e}'\) and \(\tilde{e}\) span a square at the end point of \(\gamma_1\).
Hence, it follows from Lemma~\ref{sec:gkm-graphs-2} that \(\pi(\tilde{e}')\) and \(\pi(\tilde{e})\) also span a square in \(\Gamma\).
Therefore it follows that \(\pi(e)\) and \(\pi(e')\) have the same end points.

Finally we have to consider edges $e$ of $\tilde\Gamma_{n}$ such that both $i(e)$ and $t(e)$ are not vertices in $\tilde\Gamma_{n-1}$. In this case  there are shortest paths from $x_0$ to $i(e)$, as well as to $t(e)$, of the following form:
\begin{align*}
  \gamma_1&*e_1&  \gamma_1 &*f_1, 
\end{align*}
where the edges in the two paths satisfy the same relations as above and \(e_1\) and \(f_1\) are tangent to the same factor.
This is because if two vertices of \(\tilde\Gamma\) are connected by an edge, then they only differ in one coordinate. We have to show that the two possible definitions \(\pi(e)=\nabla_{\pi(\gamma_1*e_1)}\pi(\tilde{\nabla}_{\gamma_1*e_1}^{-1} e)\) and \(\pi(\bar{e})=\nabla_{\pi(\gamma_1*f_1)}\pi(\tilde{\nabla}_{\gamma_1*f_1}^{-1} \bar{e})\) for the image of $e$ are compatible.

Since the length of the two paths are the same and they are minimising, it follows that $e_1$ and $f_1$ span a triangle, with \(e\) as third edge.

Then we have, using the induction hypothesis,
\begin{align*}
\nabla_{\pi(\gamma_1 * e_1)} \pi(\tilde{\nabla}_{\gamma_1*e_1}^{-1} e) &= \nabla_{\pi(e_1)} \nabla_{\pi(\gamma_1)} \pi(\tilde\nabla_{\gamma_1}^{-1} \tilde\nabla_{e_1}^{-1}(e)) \\
&= \nabla_{\pi(e_1)} \nabla_{\pi(\gamma_1)}\pi(\tilde\nabla_{\gamma_1}^{-1} f_1) = \nabla_{\pi(e_1)} \pi(f_1),
\end{align*}
and analogously,
\[
\nabla_{\pi(\gamma_1*f_1)} \pi(\tilde\nabla_{\gamma_1*f_1}^{-1} \bar{e}) = \nabla_{\pi(f_1)} \pi(e_1).
\]
Thus we are done if we can show that $\pi(e_1)$ and $\pi(f_1)$ span a triangle in $\Gamma$. But this follows from Lemma \ref{sec:gkm-graphs-2}. 

We have thus shown that \(\pi:\tilde\Gamma\to \Gamma\) is a well-defined map of graphs. Next, we confirm that \(\pi\) is compatible with the connections \(\tilde{\nabla}\) and \(\nabla\).

By construction \(\pi\) has the following properties:
\begin{itemize}
\item \(\pi\) maps two-dimensional faces of \(\tilde{\Gamma}\) of a given type (\(I^2,\Sigma^2,\Delta^2\)) to a face of \(\Gamma\) of the same type.
\item If \(e\) is an edge of \(\tilde{\Gamma}\) which is part of a shortest path to the base point, then we have
  \begin{equation*}
    \nabla_{\pi(e)}\circ \pi =\pi\circ \tilde{\nabla}_e,
  \end{equation*}
i.e. \(\pi\) is compatible with \(\tilde{\nabla}_e\) and \(\nabla_{\pi(e)}\).
\end{itemize}

Hence, it only remains to be shown that \(\pi\) is also compatible with the connection of those edges which are not part of a minimising path to the base point.
These edges \(e\) are tangent to factors \(\Delta^n\), \(n\geq 2\) and are opposite to the closest point \(y_0\in\Delta^n\) to \(x_0\).
Then \(e\) together with two edges \(f_1,f_2\) which connect $y_0$ with the initial and end point of $e$ form a triangle \(\Delta^2_0\).
Note that \(f_1\) and \(f_2\) are part of minimising paths to \(x_0\).
Therefore for an edge \(e'\) starting at the same point as \(e\) and not tangent to \(\Delta^2_0\) we have:
\begin{equation*}
  \pi(\tilde{\nabla}_ee')=\pi(\tilde{\nabla}_{f_2}\circ \tilde{\nabla}_{\bar{f_1}} e')=\nabla_{\pi(f_2)}\circ \nabla_{\pi(\bar{f_1})}\circ \pi(e')=\nabla_{\pi(e)}\circ\pi(e'),
\end{equation*}
because \(\pi(e),\pi(f_1),\pi(f_2)\) form a triangle in \(\Gamma\).

Moreover, by the same reason, we have
\begin{align*}
  \pi(\tilde{\nabla}_e \bar{f_1})= \pi(\bar{f_1}) = \nabla_{\pi(e)}\pi(\bar{f_2}).
\end{align*}
Hence $\pi$ is compatible with the connections. Finally it follows that $\pi$ is a covering.
\end{proof}

This theorem directly implies Theorem \ref{sec:introduction}:
\begin{proof}[Proof of Theorem \ref{sec:introduction}]
The GKM graph of a GKM$_4$ orbifold with an invariant metric of nonnegative curvature is a graph with small three-dimensional faces. By Theorem \ref{thm:covering}, any such graph is finitely covered by the vertex-edge graph of a finite product of simplices.
\end{proof}

\begin{definition}
Let $\Gamma$ be a graph with small three-dimensional faces, $x\in V(\Gamma)$, and  $\pi:\tilde\Gamma =\prod_{i=1}^k \sigma_i\to \Gamma$ a covering as in Theorem \ref{thm:covering}. Then a deck transformation of $\pi$ is an automorphism $\psi:\tilde\Gamma\to \tilde\Gamma$ such that $\pi\circ \psi = \psi$ and $\nabla_{\psi(e)} \circ \psi = \psi\circ \tilde\nabla_e$ for all edges $e\in E(\tilde\Gamma)$.
\end{definition}
Clearly, the deck transformations of $\pi$ form a group.

\begin{proposition} The covering $\pi$ is Galois, i.e., the deck transformation group of $\pi$ acts simply transitively on the fibers of $\pi$.
\end{proposition}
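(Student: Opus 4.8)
The plan is to produce the deck transformations by re-applying Theorem~\ref{thm:covering} with $\tilde\Gamma$ itself in the role of $\Gamma$. Concretely, I will show that for every vertex $\tilde x\in\pi^{-1}(x)$ there is a deck transformation $\psi$ with $\psi(x_0)=\tilde x$, that such a $\psi$ is unique, and that this already forces the action of the deck group on every fibre of $\pi$ to be simply transitive.

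The preliminary observation, which I expect to be the only genuine work, is that $\tilde\Gamma=\prod_i\sigma_i$ is itself a finite connected graph with small three-dimensional faces and that Theorem~\ref{thm:covering} is therefore applicable to it. Finiteness and connectedness are clear, since each $\sigma_i$ is the vertex--edge graph of a simplex $\Delta^{n_i}$ or of $\Sigma^{m_i}$. The three-dimensional faces of a product of such factors are products of faces of the $\sigma_i$ whose valences add up to $3$, hence of one of the five types listed in Definition~\ref{sec:gkm-graphs-6}, and three edges meeting at a vertex of $\tilde\Gamma$ span a unique three-dimensional face because the analogous statement holds in each factor. Moreover, two edges of $\tilde\Gamma$ at a vertex $\tilde v$ lying in different factors span a square, while two edges in the same factor $\sigma_i$ span a triangle (if $\sigma_i$ is a simplex) or a biangle (if $\sigma_i=\Sigma^{m_i}$); so by Lemmas~\ref{sec:gkm-graphs-3} and~\ref{sec:gkm-graphs-4} the maximal-simplex partition of $E(\tilde\Gamma)_{\tilde v}$ is exactly the partition into factors, the maximal simplices at $\tilde v$ are (canonical copies of) $\sigma_1,\dots,\sigma_k$, and their product is again $\tilde\Gamma$. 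Finally, the proof of Theorem~\ref{thm:covering} shows that $\pi$ sends two-dimensional faces of $\tilde\Gamma$ to faces of $\Gamma$ of the same type and, being a covering, restricts to a bijection on the edges at each vertex; hence it preserves in both directions the relations ``span a triangle'', ``span a biangle'', ``span a square'', and so it carries the maximal-simplex partition of $E(\tilde\Gamma)_{\tilde v}$ bijectively onto that of $E(\Gamma)_x$, respecting types.

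Granting this, fix $\tilde x\in\pi^{-1}(x)$ and put $f=\pi|_{E(\tilde\Gamma)_{x_0}}$ (the admissible bijection featuring in Theorem~\ref{thm:covering}) and $f_{\tilde x}=\pi|_{E(\tilde\Gamma)_{\tilde x}}$, both bijections onto $E(\Gamma)_x$. By the last observation, $g:=f_{\tilde x}^{-1}\circ f\colon E(\tilde\Gamma)_{x_0}\to E(\tilde\Gamma)_{\tilde x}$ is admissible, i.e. it sends the edges of a maximal simplex to the edges of a maximal simplex. Applying Theorem~\ref{thm:covering} with $\tilde\Gamma$ in place of $\Gamma$, $\tilde x$ in place of $x$ (so that the product of the maximal simplices at $\tilde x$ is canonically identified with $\tilde\Gamma$), base point $x_0$, and bijection $g$, I obtain a connection-compatible covering $\psi\colon\tilde\Gamma\to\tilde\Gamma$ with $\psi(x_0)=\tilde x$ and $\psi|_{E(\tilde\Gamma)_{x_0}}=g$. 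Since $\tilde\Gamma$ is finite and connected, the covering $\psi$ has degree $1$, hence is an automorphism. Furthermore $\pi\circ\psi$ is again a connection-compatible covering $\tilde\Gamma\to\Gamma$, and on $E(\tilde\Gamma)_{x_0}$ it equals $f_{\tilde x}\circ g=f$; by the uniqueness clause of Theorem~\ref{thm:covering} this yields $\pi\circ\psi=\pi$. Thus $\psi$ is a deck transformation with $\psi(x_0)=\tilde x$, and the deck group acts transitively on $\pi^{-1}(x)$.

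It remains to check freeness, first on $\pi^{-1}(x)$ and then on every fibre. If a deck transformation $\psi$ fixes some vertex of $\pi^{-1}(x)$, then, as $\pi\circ\psi=\pi$ restricts to a bijection on the edges at that vertex, $\psi$ fixes each of those edges, and the uniqueness part of Theorem~\ref{thm:covering} forces $\psi=\mathrm{id}$; so the action on $\pi^{-1}(x)$ is simply transitive. For an arbitrary vertex $y$ of $\Gamma$, choose an edge-path $\delta$ from $y$ to $x$; since $\pi\circ\psi=\pi$ for every deck transformation $\psi$, the image under $\psi$ of a lift of $\delta$ is again a lift of $\delta$, and a lift is determined by its initial vertex, so the path-lifting bijection $\pi^{-1}(y)\to\pi^{-1}(x)$ is equivariant with respect to the two deck actions. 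Hence the action on $\pi^{-1}(y)$ is simply transitive as well, and $\pi$ is Galois. The main obstacle in carrying this out is the self-applicability of Theorem~\ref{thm:covering} asserted in the second paragraph; once that is in place, the remainder is a formal consequence of the uniqueness statement of that theorem.
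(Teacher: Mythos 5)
Your proof is correct and follows essentially the same route as the paper: both apply Theorem~\ref{thm:covering} with $\tilde\Gamma$ in place of $\Gamma$, using the bijection on edge sets induced by $\pi$ between two points of a fibre, and both deduce $\pi\circ\psi=\pi$ and freeness from the uniqueness clause. The only differences are presentational — you spell out that $\tilde\Gamma$ is itself a graph with small three-dimensional faces whose maximal simplices at any vertex reproduce the product decomposition, and you prove simple transitivity over the base vertex and transfer it to other fibres by path lifting, whereas the paper argues directly at an arbitrary fibre using Lemma~\ref{sec:gkm-graphs-2}.
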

\begin{proof}
Let $x,y\in V(\tilde\Gamma)$ such that $\pi(x)=\pi(y) = : z$. The covering $\pi$ induces a bijection
\[
f:E(\tilde\Gamma)_x\overset{\pi}{\longrightarrow} E(\Gamma)_{z} \overset{\pi^{-1}}{\longrightarrow} E(\tilde\Gamma)_y.
\]
We first claim that $f$ respects the combinatorial types of the local factors at $x$ and $y$. In fact, this property is clear for $\pi:E(\tilde\Gamma)_x\to E(\Gamma)_z$. At the vertex $y$, the covering $\pi$ necessarily maps a local factor attached to $y$ to a local factor of the same type. As the combinatorial structure of the local factors at $y$ is, by Lemma \ref{sec:gkm-graphs-2}, the same as that of the local factors at $x$, and hence also the same as that of the local factors at $z$, it follows that $\pi$ has to respect the combinatorial structures of the local factors at $y$ as well.

Thus Theorem \ref{thm:covering}, applied to $\Gamma:=\tilde\Gamma$, implies that $f$ extends uniquely to an automorphism $\psi:\tilde\Gamma\to \tilde\Gamma$ respecting the natural connection of $\tilde\Gamma$. By the uniqueness statement of Theorem \ref{thm:covering}, the maps $\pi$ and $\pi\circ \psi$ are identical, hence $\psi$ is a deck transformation.

This shows that the deck transformation group acts transitively on the fibers of $\pi$. The uniqueness statement of Theorem \ref{thm:covering}  implies that the action of the deck transformation group on the fibers is also free.
\end{proof}

Now let $\Gamma$ be the GKM graph of a nonnegatively curved GKM$_4$ manifold, and $\pi:\tilde\Gamma\to \Gamma$ as above. Using $\pi$, we pull back the labeling of $\Gamma$; in this way $\tilde\Gamma$ becomes a GKM$_4$ graph. By construction, the deck transformation group of $\pi$ leaves invariant the labeling of $\tilde\Gamma$. 

\section{Extending GKM graphs}
\label{sec:ext}

We say that a GKM graph is effective if at each vertex \(v\in \Gamma\), we have that \(\alpha(e_1),\dots,\alpha(e_k)\) generate \(\mathfrak{t}^*\), where \(e_1,\dots,e_k\) are the edges emanating from \(v\).

Note that every GKM graph can be made effective by replacing the torus \(T\) by \(T/\bigcap_{i=1}^k \exp(\ker \alpha(e_i))\).

\begin{definition}\label{defn:gkmextension}
Let $m\geq k$. We consider two effective GKM$_3$ graphs with the same underlying graph with connection, $(\Gamma,\alpha,\nabla)$ and $(\Gamma,\beta,\nabla)$, for an $k$- respectively $m$-dimensional torus $T^k$ respectively $T^{m}$. If there exists a linear map \(\phi:(\mathfrak{t}^m)^*\rightarrow (\mathfrak{t}^k)^*\), such that \(\alpha=\phi\circ \beta\), then we say that $(\Gamma,\beta,\nabla)$ is an extension of $(\Gamma,\alpha,\nabla)$.
\end{definition}
Recall that $\beta$ is only well-defined up to signs, i.e., is a map $\beta:E(\Gamma)\to (\mathfrak{t}^m)^*/\{\pm 1\}$. The composition $\phi\circ \beta:E(\Gamma)\to (\mathfrak{t}^k)^*/\{\pm 1\}$ is thus also well-defined only up to signs.

Because a linear map sends linearly dependent vectors to linearly dependent vectors, any extension of a GKM$_k$ graph is again GKM$_k$.

Let $(\Gamma,\beta)$ be an extension of an effective GKM graph $(\Gamma,\alpha)$ in the sense of Definition \ref{defn:gkmextension}. If $e_1,e_2$ are two edges that meet in a vertex $v$, then by \eqref{eq:1}
\begin{align*}
  \hat\beta(\nabla_{e_1}(e_2))&=q'\hat\beta(e_1)+p'\hat\beta(e_2),\\
  \hat\alpha(\nabla_{e_1}(e_2))&=q\hat\alpha(e_1)+p\hat\alpha(e_2),
\end{align*}
for some \(p,p',q,q'\in \Q\). If we have chosen the signs of $\hat\beta$ in such a way that \(\phi(\hat\beta(e_i))=\hat\alpha(e_i)\) and $\phi(\hat\beta(\nabla_{e_1}e_2)) = \hat\alpha(\nabla_{e_1}e_2)$, then we see by applying \(\phi\) to the first equation and by using the $2$-independence of the weights that $q=q'$ and $p=p'$. 

From this argument we get a necessary and sufficient condition for extending an \(n\)-valent GKM$_3$ graph \((\Gamma,\alpha)\) to a GKM$_n$ graph.
This goes as follows.

At first choose a basis \(b_1,\dots,b_n\) of \((\mathfrak{t}^n)^*\) and a base point \(v_0\) of \(\Gamma\). Let \(e_1,\dots,e_n\) be the edges with initial point \(v_0\).
We want to define \(\beta(e_i)=\pm b_i\).
Hence \(\phi\) must be defined by the equations \(\phi(b_i)=\hat\alpha(e_i)\), for some choices of signs $\hat\alpha(e_i)$.
As the weights on the edges emanating from \(v_0\) are defined, we next want to define the weights for the edges emanating from a vertex connected to $v_0$ by one edge, say \(e_1\).

As in the above computation for these edges \(\nabla_{e_1}(e_j)\),  the numbers $p,q\in \Q$ such that
\begin{equation*}
  \hat\alpha(\nabla_{e_1}(e_j))= q\hat\alpha(e_1)+p\hat\alpha(e_j)
\end{equation*}
(for an arbitrary choice of sign of $\hat\alpha(\nabla_{e_1}(e_j))$) also have to satisfy
\begin{equation*}
  \hat\beta(\nabla_{e_1}(e_j))=q\hat\beta(e_1)+ p\hat\beta(e_j),
\end{equation*}
where the sign of $\hat\beta(\nabla_{e_1}(e_j))$ is chosen such that $\phi(\hat\beta(\nabla_{e_1}(e_j)) = \hat\alpha(\nabla_{e_1}(e_j))$. 
Hence the weights $\beta(\nabla_{e_i}e_j)$ are uniquely determined by the weights of the edges at \(v_0\).

Next one can consider edges emanating from vertices which are two edges away from \(v_0\).
By the above argument these are uniquely determined by the weights of edges at vertices which are one edge away from \(v_0\).
Iterating this argument, one sees that if one fixes the weights at the vertex \(v_0\), there is at most one way to define the weights of the other edges.

One can define these edges consistently if and only if weights at a vertex \(v_1\) are independent of the paths from \(v_0\) to \(v_1\) which is used to transport the weights from \(v_0\) to \(v_1\).
That is, we can define the extension if and only if transporting the weights \(\beta(e_i)\)  at \(v_0\) around a loop based at \(v_0\) in a way prescribed by the weights $\alpha$ leads to the same weights \(\beta(e_i)\) at \(v_0\). Formally, we fix an arbitrary lift $\hat\alpha$ of $\alpha$, and consider arbitrary paths $\gamma$ based at $v_0$ of the form 
\[
f_1 * \dots * f_l,
\]
as well as another edge $e$ at $v_0$. We define, inductively on $j=1,\ldots,l$, the weight of the edge $\nabla_{f_1*\dots *f_j}e$ which is obtained by transporting $\hat\beta(e)$ along the path $f_1*\dots * f_j$ in a way prescribed by the weights $\alpha$: for $j=1$, we put
\[
A^\gamma_{f_1} (e) := p\hat\beta(e) + q\hat\beta(f_1),
\]
where $p$ and $q$ are determined by the equation
\[
\hat\alpha(\nabla_{f_1}e) = p \hat\alpha(e) + q\hat\alpha(f_1).
\]
If $A^\gamma_{f_1*\dots * f_{j-1}}(e)$ is already defined for all edges $e$ at $v_0$, then we set
\begin{equation} \label{eq:extension2}
A^\gamma_{f_1*\dots * f_j}(e):= p A^\gamma_{f_1*\dots * f_{j-1}}(e) + q A^\gamma_{f_1*\dots * f_{j-1}}(\nabla_{f_1*\dots * f_{j-1}}^{-1} f_j),
\end{equation}
where $p$ and $q$ are given by
\[
\hat\alpha(\nabla_{f_1*\dots * f_j}e) = p\hat\alpha(\nabla_{f_1*\dots * f_{j-1}}e) + q \hat\alpha(f_j).
\]
Then, the prescribed $\beta(e_i)$ can be completed to a well-defined extension of $(\Gamma,\alpha)$ to a GKM$_n$-graph if and only if
\begin{equation}\label{eq:extension1}
A^\gamma_\gamma (e) = \pm \hat\beta(\nabla_\gamma e)
\end{equation}
for all closed loops $\gamma$ based at $v_0$. (Note that $\nabla_\gamma e$  is an edge at $v_0$.)

This reasoning leads to the following lemma:

\begin{lemma}
\label{sec:gkm-graphs-5}
  Let \((\Gamma,\alpha)\) be an \(n\)-valent GKM$_4$ graph which is combinatorially equivalent to a product \(\prod_i \Delta^{n_i}\times \prod_i\Sigma^{m_i}\).
  Then \((\Gamma,\alpha)\) extends to an GKM$_n$ graph.
  This extension is uniquely determined by the weights at a single vertex in \(\Gamma\).
  \end{lemma}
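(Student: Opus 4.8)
The plan is to apply the extension criterion developed just above: it suffices to fix a basis $b_1,\dots,b_n$ of ${\mathfrak t^n}^*$, declare $\beta(e_i) = \pm b_i$ for the edges $e_1,\dots,e_n$ at a base vertex $v_0$, and then check that the holonomy condition \eqref{eq:extension1} holds for every closed loop $\gamma$ based at $v_0$. The uniqueness statement is already established by the iterative argument preceding the lemma, so the entire content is verifying \eqref{eq:extension1}. Because $\Gamma$ is the vertex-edge graph of $P := \prod_i \Delta^{n_i}\times\prod_i\Sigma^{m_i}$, its fundamental group is generated by loops sitting inside the two-dimensional faces of $P$ (more precisely, by conjugates of boundary loops of $2$-faces, since the $2$-skeleton of a product of simplices and $\Sigma^m$'s has the homotopy type captured by its $2$-faces, and transport around a boundary-of-a-conjugated loop reduces to transport around the $2$-face loop). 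So I would first reduce the verification of \eqref{eq:extension1} to loops $\gamma$ that go around a single $2$-dimensional face, and to the constancy of the $\beta$-weights under transport along the edges of each such face.

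The core of the argument is then a case distinction on the combinatorial type of the $2$-face: biangle $\Sigma^2$, triangle $\Delta^2$, or square $I^2$, exactly the three types occurring in a graph with small three-dimensional faces (Lemma \ref{sec:gkm-graphs-1} / Definition \ref{sec:gkm-graphs-6}). For the square, the GKM$_4$ hypothesis is what makes the argument work: the four edges emanating from a vertex of the square that are \emph{not} tangent to the square, together with the square, sit inside $3$- and $4$-dimensional faces whose combinatorial types are on the allowed list, and the connection along a square is ``translation'' (parallel edges map to parallel edges, by Lemma \ref{sec:gkm-graphs-2} and Lemma \ref{sec:gkm-graphs-their}(2)). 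One computes $\hat\alpha$ around the square using \eqref{eq:1} and finds that the product of the four transport matrices $A_{f_j}$ is the identity on the span of the weights, hence \eqref{eq:extension1} holds around a square loop. For the triangle, $\nabla$ fixes the opposite edge and \eqref{eq:1} around the triangle closes up by the same kind of $2\times 2$ matrix bookkeeping; here I expect to use that the face is actually a $\Delta^2$ inside the GKM$_3$ structure so that the weights of the three edges of the triangle are pairwise linearly independent. For the biangle, Lemma \ref{sec:gkm-graphs-their}(1) gives that transport around the biangle does essentially nothing to edges based at a vertex (since $\nabla_e f = \nabla_{e'} f$ for the two edges $e,e'$ of the biangle), and \eqref{eq:extension1} is immediate.

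Two further points need care. First, one must check that the criterion \eqref{eq:extension1} is genuinely independent of the auxiliary lift $\hat\alpha$ and of the chosen path-representative of a given loop in $\pi_1(\Gamma)$; this is where the observation ``if $\phi\circ\hat\beta = \hat\alpha$ with consistent signs then the $(p,q)$'s for $\alpha$ and $\beta$ agree'' (made in the paragraph before the lemma) is used, together with the fact that the $A^\gamma$ operators compose correctly under concatenation of paths. Second, I should make explicit why it suffices to check loops inside $2$-faces rather than all loops: a loop bounding in the $2$-skeleton is a product of conjugates $\delta * \partial F * \bar\delta$ of $2$-face boundary loops, and transport is functorial under concatenation, so vanishing holonomy on each $\partial F$ plus the (already-proven, by the argument preceding the lemma) path-independence along trees of edges forces vanishing holonomy on all of $\pi_1(\Gamma)$.

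The main obstacle I anticipate is the square case: unlike the triangle and biangle, where the relevant two-dimensional face alone pins down the connection, for the square one genuinely needs the GKM$_4$ condition to control how $\nabla$ acts, via the classification of $3$-dimensional faces ($I^3$, $\Delta^2\times I$, $\Sigma^2\times I$) that contain a given square together with one extra edge. Assembling the transport matrices around the square and checking their product is the identity is the one spot where the computation is not completely formal, and it is also where one must confirm that the extension remains GKM$_4$ — though that is automatic since any extension of a GKM$_k$ graph is GKM$_k$, as noted after Definition \ref{defn:gkmextension}.
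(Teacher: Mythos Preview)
Your reduction strategy---show \eqref{eq:extension1} on loops bounding the $2$-faces (biangles, triangles, squares) and then pass to all of $\pi_1(\Gamma)$---is legitimate and organisationally a bit different from the paper, which instead proves a ``square-swap'' invariance (replacing two consecutive edges of a loop that span a square by the opposite pair leaves \eqref{eq:extension1} unchanged), thereby reducing to loops in single factors, and then to triangles and biangles. Both reductions work; yours requires checking one extra case (the square loop itself), while the paper's avoids that by never closing up a square.

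The genuine gap is in your mechanism for the individual checks. You propose to ``compute that the product of the four transport matrices is the identity'', but this cannot be done as a bare algebraic computation: the coefficients $(p,q)$ from \eqref{eq:1} depend on the unknown labeling $\alpha$ and are not constrained enough to force the product to be $\mathrm{id}$ by matrix algebra alone. The paper's argument is different and is the missing idea. One observes that $A^\gamma_\gamma(e)$ lies in the span $V$ of at most three of the vectors $\hat\beta(e_i)$ (for a square loop the four edges pull back under $\nabla$ to only two edges at the base vertex, so together with $e$ the span is three-dimensional; likewise for a triangle). The restriction $\phi$ maps this basis of $V$ to three weights $\hat\alpha(\cdot)$ at a single vertex, which are linearly independent by GKM$_3$; hence $\phi|_V$ is injective. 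Since by construction $\phi(A^\gamma_\gamma(e))=\pm\hat\alpha(\nabla_\gamma e)=\phi(\pm\hat\beta(\nabla_\gamma e))$ and both sides lie in $V$, equality in \eqref{eq:extension1} follows. Note in particular that GKM$_3$ already suffices here; your invocation of GKM$_4$ via the classification of three-dimensional faces is a red herring, since the combinatorial type of $\Gamma$ is given by hypothesis and the connection around a square is determined by that combinatorics alone.

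A second, smaller point: your $\pi_1$ reduction needs the observation that the $2$-face holonomy vanishes not only for the initial assignment $\hat\beta$ at $v_0$, but for the transported assignment $T_\delta\hat\beta$ at any vertex (otherwise conjugates $\delta*\partial F*\bar\delta$ are not covered). This also follows from the injectivity-of-$\phi$ argument, since $T_\delta\hat\beta$ still satisfies $\phi\circ T_\delta\hat\beta=\hat\alpha$; but you should say so explicitly.
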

\begin{proof}
Consider first the situation of a loop $\gamma$ at $v_0$ containing two successive edges that are tangent to different factors of the product \(\prod_i \Delta^{n_i}\times \prod_i\Sigma^{m_i}\), which hence span a square. Denoting by $\gamma'$ the loop obtained from $\gamma$ by replacing the two edges by its opposite edges, we note that as in the proof of Theorem \ref{thm:covering} the connection actions of $\gamma$ and $\gamma'$ coincide. We wish to show that the relations \eqref{eq:extension1} are equivalent for $\gamma$ and for $\gamma'$. Concretely, we write
\[
\gamma = \gamma_1 * f_1 * f_2 * \gamma_2,
\]
where $f_1$ and $f_2$ are two edges that span a square, with opposite edges $f_1'$ and $f_2'$, and put
\[
\gamma' = \gamma_1 * f_2' * f_1' * \gamma_2.
\]
Then, for any edge $e$ at $v_0$,
\begin{align*}
A^\gamma_{\gamma_1*f_1*f_2}(e) &= p_1A^\gamma_{\gamma_1*f_1}(e) + q_1A^\gamma_{\gamma_1*f_1}(\nabla_{\gamma_1*f_1}^{-1} f_2)\\
&=p_1(p_2 A^\gamma_{\gamma_1} (e) + q_2 A^\gamma_{\gamma_1}(\nabla_{\gamma_1}^{-1}f_1)) + q_1 A^\gamma_{\gamma_1*f_1}(\nabla_{\gamma_1}^{-1} f_2')\\
&= p_1(p_2 A^\gamma_{\gamma_1} (e) + q_2 A^\gamma_{\gamma_1}(\nabla_{\gamma_1}^{-1}f_1)) + q_1 (p_3 A^\gamma_{\gamma_1}(\nabla_{\gamma_1}^{-1} f_2') + q_3 A^\gamma_{\gamma_1}(\nabla_{\gamma_1}^{-1}f_1))
\end{align*}
for some rational numbers $p_i$ and $q_i$. Thus, $A^\gamma_{\gamma_1*f_1*f_2}(e)$ is contained in the three-dimensional span $V$ of $A^\gamma_{\gamma_1}(e)=A^{\gamma'}_{\gamma_1}(e)$, $A^\gamma_{\gamma_1}(\nabla_{\gamma_1}^{-1}f_1)=A^{\gamma'}_{\gamma_1}(\nabla_{\gamma_1}^{-1}f_1)$ and $A^\gamma_{\gamma_1}(\nabla_{\gamma_1}^{-1} f_2')=A^{\gamma'}_{\gamma_1}(\nabla_{\gamma_1}^{-1} f_2')$. The same computation shows that  $A^{\gamma'}_{\gamma_1*f_2'*f_1'}$ is contained in the same space $V$. Now as by construction the restriction map $\phi$ sends the above basis of $V$ to $\hat\alpha(\nabla_{\gamma_1}e)$, $\hat\alpha(f_1)$ and $\hat\alpha(f_2')$, it is injective on $V$. As $\phi(A^\gamma_{\gamma_1*f_1*f_2}(e)) = \pm \hat\alpha(\nabla_{\gamma_1*f_1*f_2} e) = \phi(A^{\gamma'}_{\gamma_1*f_1'*f'_2}(e))$, we conclude that $A^\gamma_{\gamma_1*f_1*f_2}(e) = \pm A^{\gamma'}_{\gamma_1*f_1'*f_2'}(e)$ for all edges $e$ at $v_0$. 

Then, by successively arguing along the edges of $\gamma_2$ using Equation \eqref{eq:extension2}, we conclude that $A^\gamma_{\gamma}(e) = A^{\gamma'}_{\gamma'}(e)$ for all edges $e$ at $v_0$, which shows the claim.

Hence, it remains to show Equation \eqref{eq:extension1} for loops \(\gamma\) of the from $\gamma_1*\dots*\gamma_k$, 
where \(\gamma_i\) is a closed path in the \(i\)-th factor \(\Gamma_i\) of \(\Gamma\). However for this it suffices to prove the equation for loops $\gamma$ in one of the factors $\Gamma_i$. 

Consider the case that $\Gamma_i$ is a simplex $\Delta^{n_i}$. Note that if a loop $\gamma$ is of the form $\gamma_1 * f_1 * \bar{f_1} * \gamma_2$, then Equation \eqref{eq:extension1} holds for $\gamma$ if and only if it holds for the loop $\gamma_1*\gamma_2$. As any loop in a simplex can, up to insertion of paths of the form $f*\bar{f}$, be written as a composition of boundaries of triangles, it suffices to show \eqref{eq:extension1} for $\gamma=f_1*f_2*f_3$ the boundary of a triangle. For such a loop one computes, analogously to the computation above, that $A^\gamma_\gamma(e)$, for any edge $e$ at $v_0$, is contained in the linear span $V$ of  $\hat\beta(e)$, $\hat\beta(f_1)$ and $\hat\beta(f_3)$, and is sent to $\pm \hat\alpha(\nabla_\gamma e)$ by $\phi$. On the other hand, $\hat\beta(\nabla_\gamma e)$ restricts to $\hat\alpha(\nabla_\gamma e)$, and is also contained in $V$ (this is because if $e$ is any edge different from $f_1$ or $f_3$, then $\nabla_\gamma e=e$; if $e=f_1$, then $\nabla_\gamma e = \bar{f_3}$, and if $e=\bar{f_3}$, then $\nabla_\gamma e = f_1$). As $\phi$ is injective on $V$, we conclude that $A^\gamma_\gamma(e)=\pm\hat\beta(\nabla_\gamma e)$.

A similar argument, only easier, goes through for the factors of type $\Sigma^{m_i}$. This concludes the proof.
\end{proof}

\section{A model}
\label{sec:model}

In this section we prove the following theorem which implies Theorem~\ref{sec:introduction-3}:

\begin{theorem}\label{thm:amodel}
  Let \(O\) be a GKM$_4$ orbifold which is rationally elliptic or non-negatively curved. Then there is a torus orbifold \(\tilde{O}\) and an action of a finite group \(G\) on \(\tilde{O}\) which normalises the torus action on \(\tilde{O}\) such that
  \begin{equation*}
    H^*(O;\mathbb{Q})\cong H^*(\tilde{O}/G;\mathbb{Q}).
  \end{equation*}
Moreover, \(\tilde{O}\) admits a metric of non-negative curvature, which is invariant both under the torus action and the \(G\)-action.

If \(O\) is a manifold, then also \(\tilde{O}\) is a manifold. 
\end{theorem}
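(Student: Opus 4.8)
## Proof proposal for Theorem~\ref{thm:amodel}

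The plan is to assemble the model by combining the combinatorial results of Sections~\ref{sec:coverings-gkm-graphs} and~\ref{sec:ext} with the explicit construction of torus orbifolds over nice manifolds with corners recalled in Section~\ref{Preliminaries}. First I would note that by Lemma~\ref{sec:gkm-graphs}, the GKM graph $\Gamma$ of $O$ is a graph with small three-dimensional faces, so by Theorem~\ref{thm:covering} there is a finite Galois covering $\pi:\tilde\Gamma=\prod_i\Delta^{n_i}\times\prod_i\Sigma^{m_i}\to\Gamma$ compatible with the connections, with deck transformation group $G$ acting simply transitively on the fibers. Pulling back the labelling $\alpha$ of $\Gamma$ along $\pi$ turns $\tilde\Gamma$ into a GKM$_4$ graph on which $G$ acts by label-preserving connection-preserving automorphisms. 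Then Lemma~\ref{sec:gkm-graphs-5} extends $(\tilde\Gamma,\pi^*\alpha)$ to a GKM$_n$ graph $(\tilde\Gamma,\beta)$ with $n=\sum_i n_i+\sum_i m_i$; since the extension is unique once the weights at one vertex are fixed, and since $G$ permutes the vertices while preserving $\pi^*\alpha$ and $\nabla$, one can arrange (by averaging the choice of base-vertex weights over a $G$-orbit, or simply by transporting the chosen weights along $\pi$) that $G$ also acts on the extended graph by automorphisms preserving $\beta$ up to sign. The crucial point here is that the extension is constructed by parallel transport, exactly the operation $G$ respects, so $G$-equivariance of the extension is essentially automatic.

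Next I would geometrically realise $(\tilde\Gamma,\beta)$. The underlying polytope-like object is $P=\prod_i\Delta^{n_i}\times\prod_i\Sigma^{m_i}$, a nice manifold with corners whose faces are contractible, and $\beta$ is precisely a characteristic function $\lambda:\mathcal F(P)\to\Z^n$ in the sense of Section~\ref{Preliminaries} (the dual data to the edge-labels at each vertex). Hence the pair $(P,\lambda)$ defines a torus orbifold $\tilde O=(P\times T^n)/\!\sim$ whose GKM graph, equivalently whose torus graph, is $(\tilde\Gamma,\beta)$. Since $\tilde O$ has rational cohomology concentrated in even degrees and its GKM graph realises the extension of $\pi^*\alpha$, the GKM Theorem~\ref{thm:gkm} (valid for orbifolds) together with the restriction map $\phi:\mathfrak t^{n*}\to\mathfrak t^{k*}$ shows that $H^*_T(\tilde O;\Q)$, viewed as a module over $H^*(BT^k)$ via $\phi$, has invariant ring equal to $H^*_T(\Gamma;\Q)=H^*_T(O;\Q)$; factoring out the ideal generated by $\mathfrak t^{k*}$ and using surjectivity of $H^*_T\to H^*$ yields $H^*(O;\Q)$ in terms of the $\tilde\Gamma$-data. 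Moreover, because $P$ is a product of simplices and $\Sigma^m$'s, the associated moment-angle manifold $Z_P$ is a product of round spheres; equipping it with the product metric makes $T^m$ a maximal torus of its isometry group, and $\tilde O$ arises as the quotient of $Z_P$ by a subtorus acting isometrically, so $\tilde O$ inherits an invariant non-negatively curved metric.

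Now I would bring in $G$. The $G$-action on $(\tilde\Gamma,\beta)$ is, combinatorially, an action on $P$ by face-poset automorphisms carrying $\lambda$ to $\lambda$ (up to the ambiguity of sign/multiple, which does not change $\tilde O$); by functoriality of the construction $\tilde O=(P\times T^n)/\!\sim$, this lifts to an action of $G$ on $\tilde O$ normalising the $T^n$-action — concretely $G$ acts on the $P$-factor by the combinatorial automorphism and correspondingly on the $T^n$-factor by the induced lattice automorphism. One checks directly from the quotient description that $G$ can be chosen to act isometrically for the non-negatively curved metric above: the $G$-action on $Z_P$ permutes the sphere factors and acts by isometries within them, hence is isometric for the product metric, and it descends to an isometric action on $\tilde O$ commuting with the subtorus quotient. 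Taking the quotient, $\tilde O/G$ is a GKM$_4$ orbifold whose GKM graph is $\tilde\Gamma/G=\Gamma$ with the labelling $\alpha$; again by the GKM Theorem its rational cohomology is determined by $(\Gamma,\alpha)$, giving $H^*(\tilde O/G;\Q)\cong H^*(\tilde O;\Q)^G$. The identification of $H^*(\tilde O;\Q)^G$ with $H^*(O;\Q)$ follows from comparing both with $H^*_T$-rings attached to the graphs, using that $\pi$ is the quotient map $\tilde\Gamma\to\tilde\Gamma/G=\Gamma$ and that $G$ preserves the full GKM data; alternatively one observes $\tilde O/G$ and $O$ are both GKM$_4$ orbifolds with the same labelled GKM graph, and a labelled GKM graph determines the rational cohomology ring. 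Finally, for the manifold case: if $O$ is a manifold then the edge labels of $\Gamma$ form unimodular bases at each vertex, hence so do those of $\tilde\Gamma$ and of its extension $\beta$, so at every vertex of $P$ the vectors $\lambda(F_i)$ form a $\Z$-basis; by the criterion recalled in Section~\ref{Preliminaries} the torus orbifold $\tilde O$ is then a (locally standard) torus manifold, not merely an orbifold. The main obstacle I anticipate is the $G$-equivariance of the extension in Lemma~\ref{sec:gkm-graphs-5} and the clean identification $H^*(\tilde O/G;\Q)\cong H^*(O;\Q)$: one must be careful that the freedom in choosing the base-point weights of the extension can be exercised $G$-equivariantly, and that the sign ambiguities in $\beta$ do not obstruct $G$ from acting by honest (label-preserving-up-to-sign) automorphisms of $\tilde O$ normalising the torus — this is where most of the real work of the proof will lie.
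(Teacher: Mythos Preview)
Your proposal follows essentially the same architecture as the paper's proof: covering by Theorem~\ref{thm:covering}, extension by Lemma~\ref{sec:gkm-graphs-5}, realisation as a torus orbifold over $P$, lifting the $G$-action, and the metric via the moment-angle complex $Z_P$. The places you flag as ``where most of the real work will lie'' are exactly the lemmas the paper isolates.

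There is, however, one concrete ingredient you do not name and which is essential for your cohomology identification. You want to argue that $\tilde O/G$ is a GKM$_4$ orbifold with labelled graph $(\Gamma,\alpha)$, hence has the same rational cohomology as $O$. But $G$ acts nontrivially on the big torus $\check T^n$, so it is not a priori clear which torus acts on $\tilde O/G$. The missing step is to show that the $G$-action on $\check T^n$ restricts to the \emph{trivial} action on the subtorus $\check T^k=\phi^*(T^k)$ coming from the original torus of $O$; this is the paper's Lemma~\ref{sec:gkm-manifolds-with}, and it is what makes the chain
\[
H^*_{T^k}(\Gamma)\;\cong\;H^*_{\check T^k}(\tilde\Gamma)^G\;\cong\;H^*_{\check T^k}(\tilde O)^G
\]
meaningful. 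Your suggestion of ``averaging the choice of base-vertex weights over a $G$-orbit'' is not quite the right mechanism; rather, one defines for each $g\in G$ the linear map $A_g$ by $A_g\hat\beta(e_i)=\hat\beta(ge_i)$ at the base vertex, uses uniqueness in Lemma~\ref{sec:gkm-graphs-5} to see this is compatible with $\beta$ globally, and then observes $\phi\circ A_g=\phi$ directly from $\pi^*\alpha$ being $G$-invariant --- this last equation is exactly the triviality of $G$ on $\check T^k$.

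A smaller imprecision: in the manifold case you say ``the edge labels of $\Gamma$ form unimodular bases at each vertex''. This is not quite right as stated, since the $\alpha(e_i)$ live in ${\mathfrak t^k}^*$ and there are $n>k$ of them. The correct statement (and the one the paper uses) is that when $O$ is a manifold the connection coefficients in \eqref{eq:1} satisfy $p=\pm1$, $q\in\mathbb Z$, so the lattice $I^*$ spanned by all the $\beta(e)$ equals the lattice spanned by the $\beta(e_i)$ at any single vertex; this is what forces $\tilde O$ to be a manifold.
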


For the proof of the theorem we need several lemmas. Throughout this section we use the following notation.
We denote the ($k$-dimensional) torus acting on $O$ by $T^k$, the GKM graph of \(O\) by \((\Gamma,\alpha)\) and the covering of \(\Gamma\) provided by Theorem \ref{thm:covering} by \(\pi:\tilde\Gamma\to \Gamma\). We let \(G\) be the deck transformation group of this covering. Denoting the lifted labeling on $\tilde\Gamma$ by $\tilde\alpha$, we can extend $(\tilde\Gamma,\tilde\alpha)$ by Lemma \ref{sec:gkm-graphs-5} to a GKM$_n$ graph, for an $n$-dimensional torus $T^n$. We denote its weights by \(\beta\), and we have a linear map $\phi:(\mathfrak{t}^n)^*\to (\mathfrak{t}^k)^*$ such that $\alpha = \phi\circ \beta$.

Moreover, by \(P\) we denote the product \(\prod_i \Delta^{n_i}\times \prod_i \Sigma^{m_i}\) of which \(\tilde\Gamma\) is the vertex-edge-graph.

\begin{lemma}
\label{sec:conjecture-1}
  In the above situation \(G\) acts by face-preserving homeomorphisms on \(P\).
\end{lemma}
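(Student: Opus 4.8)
The plan is to build the action of $G$ on $P$ from the combinatorial data already in hand. Recall that $\tilde\Gamma$ is the vertex-edge graph of $P=\prod_i\Delta^{n_i}\times\prod_i\Sigma^{m_i}$, and that $G$ is the deck transformation group of $\pi:\tilde\Gamma\to\Gamma$, acting on $\tilde\Gamma$ by graph automorphisms which are compatible with the connection $\tilde\nabla$. So for each $g\in G$ I already have a bijection $V(\tilde\Gamma)\to V(\tilde\Gamma)$ on vertices of $P$ and a bijection on edges, and by the argument used in the proof that $\pi$ is Galois (via Lemma \ref{sec:gkm-graphs-2} and Theorem \ref{thm:covering} applied to $\Gamma:=\tilde\Gamma$) such a $g$ sends a maximal simplex at a vertex $x$ to a maximal simplex of the same combinatorial type at $g(x)$. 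The goal is to promote this vertex-edge automorphism to a face-preserving self-homeomorphism of $P$, and to check that $g\mapsto(\text{this homeomorphism})$ is a group action.

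The key steps, in order. First, I would recall that because $P$ is a product of simplices and spaces $\Sigma^m=S^{2m}/T^m$ (each of the latter being homeomorphic to a simplex as well, but with a different face lattice — its facets are the images of the $T^{m}$-invariant subspheres), the face poset $\mathcal P(P)$ together with the factor decomposition is rigid enough that any automorphism of the vertex-edge graph preserving the partition of each $E_x$ into maximal simplices is induced by a unique automorphism of $\mathcal P(P)$. Concretely, a face of $P$ is determined by the set of its vertices (since each face of a simplex or of $\Sigma^m$ is determined by its vertex set), and a subset $S\subset V(\tilde\Gamma)$ is the vertex set of a face exactly when $S$ is a product $\prod_i S_i$ with each $S_i$ the vertex set of a face of the $i$-th factor; the edges of $\tilde\Gamma$ lying in a factor and the maximal-simplex partition of Lemma \ref{sec:gkm-graphs-4} let one detect the factor decomposition and the face lattice purely combinatorially, so $g$ permutes the faces. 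Second, I would invoke the standard fact that a product of simplices is the geometric realisation of its face poset, i.e. $P\cong |\Delta(\mathcal P(P))|$ as a regular CW (indeed, a polytope-like) complex on which poset automorphisms act by homeomorphisms; hence the induced poset automorphism $\bar g$ of $\mathcal P(P)$ realises to a face-preserving homeomorphism of $P$, extending the given map on vertices and edges. Third, I would check functoriality: the realisation is functorial in poset maps and the vertex-edge restriction is faithful (distinct $g$ give distinct vertex permutations, since $G$ acts simply transitively on a fibre and hence freely on $V(\tilde\Gamma)$), so $g\mapsto\bar g$ is an injective homomorphism $G\to\mathrm{Homeo}(P)$ and the $G$-action is face-preserving.

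The main obstacle, I expect, is the second step: making precise that the combinatorial automorphism of $\tilde\Gamma$ really does extend to a \emph{homeomorphism} of $P$ rather than merely an automorphism of the face poset, and doing so in a way compatible with the factorisation so that later (in the construction of $\tilde O$ and its metric) the $G$-action can be taken to be by isometries of the product metric on $Z_P$ and hence on $\tilde O$. For the simplex factors this is routine — any permutation of the vertices of $\Delta^{n_i}$ is realised by an affine isometry — but for the $\Sigma^{m_i}$ factors one must use that $\Sigma^{m_i}=S^{2m_i}/T^{m_i}$ and that the relevant automorphisms come from permuting the $m_i+1$ homogeneous coordinates (equivalently, permuting the invariant $2$-spheres), which are induced by isometries of the round $S^{2m_i}$ normalising the torus; I would phrase this via the moment-angle description $Z_P=\prod_i S^{2n_i+1}\times\prod_i S^{2m_i}$ so that $G$ visibly acts by coordinate permutations within each spherical factor, commuting appropriately with $T^m$, and descends to the claimed action on $P=Z_P/T^m$. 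Everything else is bookkeeping: checking that $\bar g$ preserves the factorisation, that it is a homeomorphism, and that $g\mapsto\bar g$ is a homomorphism.
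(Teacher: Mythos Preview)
Your proposal is correct in outline, but the paper's proof is considerably shorter and more direct, and the difference is instructive. Both arguments proceed by first producing a $G$-action on the face poset $\mathcal P(P)$ and then realising it by homeomorphisms, but the paper handles each step more efficiently.

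For the poset action, you go through vertex sets and the factor decomposition, using Lemma~\ref{sec:gkm-graphs-4} to detect which edges lie in which simplex factor. The paper instead observes that the $k$-dimensional faces of $P$ are precisely the $\nabla$-invariant $k$-valent subgraphs of $\tilde\Gamma$; since every $g\in G$ is compatible with $\tilde\nabla$ by the definition of deck transformation, it automatically permutes these subgraphs. This bypasses all discussion of factors and vertex sets.

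For the homeomorphism, you invoke the order-complex realisation $P\cong|\Delta(\mathcal P(P))|$ and then pass to the moment-angle complex $Z_P$ in order to get isometries. The paper simply notes that every face of $P$ is homeomorphic to the cone on its boundary, so one can build the homeomorphism inductively over the dimension of the faces. Your concern about isometries is legitimate but premature: it belongs to the later lemma on lifting the $G$-action to $Z_P$, not to the present statement, which only asks for face-preserving homeomorphisms. Keeping the two separate makes each argument cleaner.
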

\begin{proof}
  We will show that \(G\) acts by automorphisms on the face poset of \(P\). The action on \(P\) can then be constructed inductively because all the faces of \(P\) are homeomorphic to cones over their respective boundaries.

The vertex-edge graphs of the \(k\)-dimensional faces of \(P\) are the \(\nabla\)-invariant \(k\)-valent subgraphs of \(\tilde\Gamma\). Since the \(G\)-action on \(\tilde\Gamma\) is compatible with \(\nabla\), it leaves the set of these subgraphs invariant. Hence, we have a \(G\)-action on the face poset of \(P\) and the claim follows.
\end{proof}

Let \(I^*\subset {\mathfrak{t}^n}^*\) be the integer lattice spanned by all the \(\beta(e)\) where \(e\) runs through the edges of \(\tilde\Gamma\). Moreover denote by \(I\subset \mathfrak{t}^n\) the dual lattice and by \(\check{T}^n\) the quotient torus \(\mathfrak{t}^n/I\). Note that \(T^n\) finitely covers \(\check{T}^n\).

Note also that if \(O\) is a manifold, then \(I^*\) is spanned by the weights of the edges meeting in one vertex, by the fact that in this case we can choose $p=\pm 1$ and $q$ to be an integer in Equation (\ref{eq:1}).

\begin{lemma}
\label{sec:conjecture}
In the above situation \(G\) acts by automorphisms on the torus \(\check{T}^n\) such that for all \(g\in G\), \(e\in E(\tilde\Gamma)\):
  \begin{equation*}
    \beta(ge)=g \beta(e)
  \end{equation*}
\end{lemma}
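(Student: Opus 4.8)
The plan is to use the explicit description of the extension from Lemma~\ref{sec:gkm-graphs-5}, which says that the weights $\beta$ are completely determined by their values at a single base vertex $v_0$, via parallel transport along paths prescribed by the combinatorics of $\Gamma$ (equivalently, of the product $P$). Fix such a base vertex $v_0 \in V(\tilde\Gamma)$. For $g \in G$, the vertex $gv_0$ is another vertex of $\tilde\Gamma$, and the deck transformation property $\nabla_{\psi(e)}\circ \psi = \psi \circ \tilde\nabla_e$ means that $g$ carries parallel transport in $\tilde\Gamma$ to parallel transport in $\tilde\Gamma$. The key point is that the relations defining the $A^\gamma_\gamma$ in Section~\ref{sec:ext}, being built out of the rational coefficients $p,q$ appearing in Equation~\eqref{eq:1}, depend only on the labelling $\tilde\alpha$, which is $G$-invariant by construction (the labelling on $\tilde\Gamma$ was pulled back from $\Gamma$ and the deck group preserves it).

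First I would define a new labelling $\beta^g$ on $\tilde\Gamma$ by $\beta^g(e) := \beta(g^{-1}e)$. I claim $(\tilde\Gamma, \beta^g, \nabla)$ is again a GKM$_n$ extension of $(\tilde\Gamma, \tilde\alpha, \nabla)$: the required linear map is $\phi \circ A_g^{-1}$, where $A_g \in \GL(\mathfrak{t}^n)^*$ is to be produced — but more directly, since $g$ is an automorphism of the labelled graph $(\tilde\Gamma,\tilde\alpha)$ compatible with $\nabla$, pulling back the extension $\beta$ along $g$ yields another solution of the same extension equations. By the uniqueness clause of Lemma~\ref{sec:gkm-graphs-5}, any two GKM$_n$ extensions of $(\tilde\Gamma,\tilde\alpha)$ that agree — up to a linear automorphism of $\mathfrak{t}^n{}^*$ — at the base vertex coincide globally up to that automorphism. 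Concretely: the values $\{\beta^g(e) : i(e) = v_0\}$ form a basis of a lattice, as do $\{\beta(e): i(e)=v_0\}$, and the two bases are related by some $A_g^* \in \GL_n(\mathbb{Z})$ (integrality because both span the same lattice $I^*$ — here is where passing to $\check T^n$, whose cocharacter lattice is exactly $I$, is essential). Then $A_g$ defines an automorphism of $\check T^n = \mathfrak{t}^n/I$, and I would check $g \mapsto A_g$ is a homomorphism by the usual uniqueness argument (both $A_{gh}$ and $A_g A_h$ realize the composite on weights at $v_0$).

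Having the automorphism $A_g$ of $\check T^n$, it remains to verify $\beta(ge) = A_g\,\beta(e)$ for every edge $e$, not just those at $v_0$. This is where I would invoke uniqueness of the extension once more: both sides, as functions of $e$, satisfy the parallel-transport recursion \eqref{eq:extension2} determined by $\tilde\alpha$ — the left side because $g$ intertwines $\nabla$ with itself and fixes $\tilde\alpha$, the right side because $A_g$ is linear and thus commutes with the linear-combination formulas — and they agree at $v_0$ by definition of $A_g$. Hence they agree everywhere. Relabelling $A_g$ as the action of $g$ on $\check T^n$ gives the statement.

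The main obstacle I anticipate is the integrality/lattice bookkeeping: one must be careful that $A_g$ genuinely descends to an automorphism of the torus $\check T^n$ and not merely a rational automorphism of $\mathfrak{t}^n$. This is precisely why the lemma is phrased for $\check T^n$ rather than $T^n$ — the lattice $I^*$ is, by definition, spanned by \emph{all} the $\beta(e)$, so it is tautologically $G$-stable once we know $\beta(ge)$ is a $\mathbb{Q}$-linear image of the $\beta(e)$'s lying in $I^*$; the genuine content is that the induced map on $I^*$ is a lattice isomorphism, which follows since $g$ permutes the edges and hence permutes a spanning set of $I^*$. A secondary, more routine, point is to make the "up to sign" ambiguity in $\beta$ harmless — this is handled exactly as in Section~\ref{sec:ext}, by fixing lifts $\hat\beta$ compatibly with the chosen $\phi$ and tracking signs through the recursion.
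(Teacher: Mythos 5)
Your proposal is correct and follows essentially the same route as the paper: define $A_g$ on ${\mathfrak{t}^n}^*$ by its effect on the weights at the base vertex (with signs fixed via $\phi$), use the uniqueness clause of Lemma \ref{sec:gkm-graphs-5} to conclude $\beta(ge)=A_g\beta(e)$ for all edges, deduce that $A_g$ preserves the lattice $I^*$ because it permutes the spanning set $\{\beta(e)\}$, and hence descends to $\Aut(\check T^n)$; the homomorphism property follows from uniqueness, which the paper packages via the graph $H\subset G\times\Aut(\check T^n)$. (Only the parenthetical claim that the weights at $v_0$ alone span $I^*$ is inaccurate in the orbifold case, but your final paragraph replaces it by the correct argument using all edges, exactly as in the paper.)
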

\begin{proof}
  Let \[
  H=\{(g,f)\in G\times \Aut(\check{T}^n);\;\beta(ge)= f\circ\beta(e) \text{ for all } e\in E(\tilde\Gamma) \text{ and } \phi\circ f =\phi\}.
  \]
  Recall that $\beta$ takes values in ${\mathfrak{t}^n}^*/\{\pm 1\}$.
We claim that the projection on the first factor of \(H\) is an isomorphism.
We first show surjectivity. Let \(g\in G\).

Let \(v_0\) be a vertex of \(\tilde\Gamma\) and \(e_1,\dots,e_n\) be the edges of \(\tilde\Gamma\)  meeting at \(v_0\). Then, after choosing signs, the \(\hat\beta(e_1),\dots,\hat\beta(e_n)\) form a basis of \(\mathfrak{t}^*\).
Moreover, we can choose signs for the \(\hat\beta(ge_1),\dots,\hat\beta(ge_n)\) in such a way that
\[\phi(\hat\beta(e_i))=\hat\alpha(\pi(e_i))=\phi(\hat\beta(ge_i))\] holds for all \(i\).
Therefore, for each \(g\in G\), the automorphism \(A_g\) of \({\mathfrak{t}^n}^*\) defined by 
\begin{equation}
\label{eq:2}
  A_g\hat\beta(e_i)=\hat\beta(ge_i)
\end{equation}
satisfies \(\phi\circ A_g=\phi\). Now by
\begin{align*}
  e\longmapsto \beta(e)\qquad \text{ and } \qquad e\longmapsto A_g^{-1}\beta(ge)
\end{align*}
there are two GKM$_n$ extensions of \(\tilde\alpha\) defined which agree on edges at the vertex \(v_0\).
Therefore by Lemma \ref{sec:gkm-graphs-5} they agree everywhere.
Hence, \(A_g\) maps the lattice \(I^*\) isomorphically to itself and therefore its dual $A_g^*$ defines an element of \(\Aut(\check{T}^n)\). Thus surjectivity is proven.

To prove injectivity, let \((\id,f)\in\ker(H\rightarrow G)\).
Then for \(i=1,\dots,n\) we have
\begin{equation*}
  f\circ\beta(e_i)=\beta(e_i).
\end{equation*}
Since \(\phi\circ\beta(e_i)\neq 0\), it follows from the requirement that \(\phi\circ f=\phi\) that \(f\) must be the identity and the claim follows.
\end{proof}

By the above lemma we have a \(G\)-action by automorphism on \(\check{T}^n\). 
For later reference we  note here also the following

\begin{lemma}\label{sec:gkm-manifolds-with}
In the above situation the map $\phi^*:{\mathfrak{t}}^k\to {\mathfrak{t}}^n$ descends to a homomorphism $\phi^*:T^k\to \check{T}^n$.  The above \(G\)-action restricts to the trivial action on \(\check{T}^{k}=\phi^*(T^k)\subset \check{T}^n\), where the torus \(T^k\) is associated to the covered GKM graph \(\Gamma\).
\end{lemma}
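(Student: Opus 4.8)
The plan is to argue entirely on the level of Lie algebras and integral lattices, and to pass to the tori only at the very end. Recall that $\phi^{*}\colon\mathfrak{t}^{k}\to\mathfrak{t}^{n}$ is the dual of the linear map $\phi\colon{\mathfrak{t}^{n}}^{*}\to{\mathfrak{t}^{k}}^{*}$, characterised by $\langle\phi^{*}(v),w\rangle=\langle v,\phi(w)\rangle$. Note first that $\phi$ is surjective, since its image contains all the labels $\hat\alpha(f)=\pm\phi(\hat\beta(e))$ of $\Gamma$ and these span ${\mathfrak{t}^{k}}^{*}$ by effectiveness; hence $\phi^{*}$ is injective and its image $\check T^{k}:=\phi^{*}(T^{k})$ is a $k$-dimensional subtorus of $\check T^{n}$, which explains the notation. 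I will write $L_{k}\subset\mathfrak{t}^{k}$ for the integral lattice with $T^{k}=\mathfrak{t}^{k}/L_{k}$, so that $H^{2}(BT^{k})$ is identified with the dual lattice $L_{k}^{*}\subset{\mathfrak{t}^{k}}^{*}$, and I recall from the paragraph preceding the lemma that $\check T^{n}=\mathfrak{t}^{n}/I$, where $I\subset\mathfrak{t}^{n}$ is the lattice dual to $I^{*}=\operatorname{span}_{\mathbb{Z}}\{\hat\beta(e):e\in E(\tilde\Gamma)\}$.

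To prove the first statement, I would show $\phi^{*}(L_{k})\subseteq I$. By definition $I$ consists of the $v\in\mathfrak{t}^{n}$ with $\langle v,\hat\beta(e)\rangle\in\mathbb{Z}$ for every edge $e$ of $\tilde\Gamma$, and for $\lambda\in L_{k}$ one has $\langle\phi^{*}(\lambda),\hat\beta(e)\rangle=\langle\lambda,\phi(\hat\beta(e))\rangle=\pm\langle\lambda,\hat\alpha(\pi(e))\rangle$, using $\alpha=\phi\circ\beta$. So everything reduces to the integrality statement $\langle\lambda,\hat\alpha(\pi(e))\rangle\in\mathbb{Z}$, which holds because $\hat\alpha(\pi(e))$ is a lift of a label of the GKM graph $\Gamma$ of $O$ and therefore lies in $H^{2}(BT^{k})=L_{k}^{*}$; this is precisely what the rescaling of the weights in the construction of the GKM graph of an orbifold (Section~\ref{Preliminaries}) is designed to ensure. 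Hence $\phi^{*}$ descends to a homomorphism $T^{k}\to\check T^{n}$.

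For the second statement, I would invoke the description of the $G$-action from the proof of Lemma~\ref{sec:conjecture}: each $g\in G$ acts on $\check T^{n}=\mathfrak{t}^{n}/I$ through the dual $A_{g}^{*}\in\Aut(\mathfrak{t}^{n})$ of the automorphism $A_{g}\in\Aut({\mathfrak{t}^{n}}^{*})$ defined by $A_{g}\hat\beta(e_{i})=\hat\beta(ge_{i})$ (recall $A_{g}$ preserves $I^{*}$, so $A_{g}^{*}$ preserves $I$). The decisive property recorded there is $\phi\circ A_{g}=\phi$ for all $g$; dualising gives $A_{g}^{*}\circ\phi^{*}=(\phi\circ A_{g})^{*}=\phi^{*}$, so $A_{g}^{*}$ restricts to the identity on $\image(\phi^{*})\subseteq\mathfrak{t}^{n}$ --- and then so does its inverse, so the conclusion is insensitive to how the $G$-action is normalised. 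Reducing modulo $I$, every $g\in G$ fixes $\phi^{*}(T^{k})=\check T^{k}$ pointwise, which is the assertion.

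I do not expect any genuinely hard step. The points that need care are the bookkeeping with the two pairs of dual lattices $L_{k},L_{k}^{*}$ and $I,I^{*}$; the fact that the weights $\alpha(e)$ and $\beta(e)$ are only defined up to sign, so that one must work throughout with the $\mathbb{Z}$-spans of fixed lifts rather than with the weights themselves; and, in the first part, the appeal to integrality of the labels of $\Gamma$, which is exactly the content of the orbifold rescaling.
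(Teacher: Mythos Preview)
Your proof is correct and follows essentially the same approach as the paper: for the first part you show $\phi^{*}$ carries the integral lattice of $T^{k}$ into $I$ by dualising $\phi\circ\beta=\alpha$ and using integrality of the labels $\alpha$, and for the second part you dualise $\phi\circ A_{g}=\phi$ from Lemma~\ref{sec:conjecture} to get $A_{g}^{*}\circ\phi^{*}=\phi^{*}$. The paper's proof is the same two-line argument, only more terse and phrased via the lattice $J^{*}$ spanned by the $\alpha(e)$ rather than the full integral lattice $L_{k}^{*}$; your added remarks on injectivity of $\phi^{*}$ and on the sign ambiguities are helpful elaborations but not a different route.
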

\begin{proof}
As $\phi\circ\beta = \alpha$, the map $\phi$ sends $I^*$ onto the lattice $J^*$ in ${{\mathfrak{t}}^k}^*$ spanned by all the $\alpha(e)$. Thus, $\phi^*$ sends the dual lattice $J$ injectively to $I$. As $T^k = {{\mathfrak{t}}^k}/J$, it follows that $\phi^*$ induces a well-defined map on $T^k$. 

In the notation of the proof of Lemma \ref{sec:conjecture} we have $\phi^* = A_g\circ \phi^*$ for all $g\in G$.
\end{proof}

By dualising the weights \(\beta\) we get a labeling \(\lambda\) of the facets of \(P\) by one-dimensional subgroups of \(\check{T}^n\) which is also compatible with the \(G\)-actions on \(P\) and \(\check{T}^n\) by the above lemmas.

Therefore there is a \emph{continuous} \(G\)-action on the torus orbifold
\begin{equation*}
  \tilde{O}=(P\times \check{T}^n)/\sim 
\end{equation*}
where \((x_1,t_1)\sim (x_2,t_2)\) if and only if \(x_1=x_2\) and \(t_1t_2^{-1}\in \langle \lambda(F);\; x_1\in F\rangle\),
which normalises the \(\check{T}^n\)-action.

Note that in the case that \(O\) is a manifold, \(\tilde{O}\) is also a manifold.
This is because in this case the weights at every vertex \(v\in P\) form a basis of \(I^*\).
Hence in this case \(\check{T}^n=\prod_{v\in F}\lambda(F)\) for all vertices \(v\in P\).
This implies that \(\tilde{O}\) is locally modelled on effective complex \(n\)-dimensional \(\check{T}^n\)-representations.
So it is a manifold.

Moreover, since \(G\) acts trivially on \(T^k\), we have
\begin{align*}
  H^*(O)&=H^*_{T^k}(O)/(H^2(BT^k))\\ &=H^*_{T^k}(\Gamma)/(H^2(BT^k)) &&\text{by the GKM Theorem}\\
&=H^*_{\check{T}^k}(\Gamma)/(H^2(B\check{T}^k))\\
  &=H^*_{\check{T}^k}(\tilde\Gamma)^G/(H^2(B\check{T}^k)) &&\text{because } \Gamma=\tilde{\Gamma}/G \\ &=H^*_{\check{T}^k}(\tilde{O})^G/(H^2(B\check{T}^k)) &&\text{by the GKM Theorem again}\\
&=(H^*_{\check{T}^k}(\tilde{O})/(H^2(B\check{T}^k)))^G\\ &=H^*(\tilde{O})^G=H^*(\tilde{O}/G).
\end{align*}
Hence we have proven Theorem \ref{thm:amodel} except for the claim about the invariant non-negatively curved metric on $\tilde O$.

This will follow from the next lemma because the orbit space of an isometric group action on a non-negatively curved manifold equipped with the quotient metric is a non-negatively curved Alexandrov space.
\begin{lemma}
  Let \(Z_P=\prod_i S^{2n_i+1}\times \prod_i S^{2m_i}\) be the moment angle complex associated to \(P=\prod_i\Delta^{n_i}\times \prod_i \Sigma^{m_i}\) equipped with the natural product metric.
  Then the \(G\)-action on \(\tilde{O}\) lifts to an isometric action on \(Z_P\).
\end{lemma}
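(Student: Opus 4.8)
The plan is to realise the moment angle complex $Z_P$ as a product of round spheres, exhibit the ambient torus $\check{T}^n$ as a maximal torus of the product of isometry groups, and then lift the $G$-action on $\tilde O$ to an action on $Z_P$ by producing, for each $g\in G$, an isometry of $Z_P$ that covers the homeomorphism of $\tilde O$. Recall from the preliminaries that $\tilde O$ is the quotient of $Z_P$ by the kernel $K$ of the homomorphism $\varphi:\check T^m\to\check T^n$ (where $\check T^m=\prod_i S^1_i$ is the torus acting on $Z_P$ and the map $\varphi$ is determined by the labels $\lambda(F_i)$); so it suffices to lift each $g$ to a map of $Z_P$ that normalises the $K$-action.

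First I would package the data combinatorially. The facet-labelling $\lambda:\mathcal F(P)\to\check T^n$ together with the $G$-action on $P$ and the compatible $G$-action on $\check T^n$ (Lemmas \ref{sec:conjecture-1} and \ref{sec:conjecture}) means that $g\in G$ permutes the facets $F_1,\dots,F_m$ by some permutation $\sigma_g\in S_m$, and for each $i$ one has $\lambda(F_{\sigma_g(i)})=g\cdot\lambda(F_i)$ as subgroups of $\check T^n$. On the level of the covering torus $\check T^m=\prod_i S^1_i$, the coordinate permutation $P_{\sigma_g}:\prod_i S^1_i\to\prod_i S^1_i$ then satisfies $\varphi\circ P_{\sigma_g} = A_g^*\circ\varphi$, where $A_g^*\in\Aut(\check T^n)$ is the automorphism from Lemma \ref{sec:conjecture}; in particular $P_{\sigma_g}$ carries $\ker\varphi=K$ to itself. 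This is the key identity: it says that permuting the circle factors of $\check T^m$ is compatible, through $\varphi$, with the $G$-action downstairs.

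Next I would use the permutation $\sigma_g$ to build a genuine isometry of $Z_P=\prod_i S^{2n_i+1}\times\prod_j S^{2m_j}$. Here one must be careful: $\sigma_g$ need not preserve the blocking of facets into the individual simplex/suborbifold factors arbitrarily — but the $G$-action on the face poset of $P$ must permute the \emph{factors} $\Delta^{n_i}$ (resp.\ $\Sigma^{m_j}$) among themselves, matching factors of equal dimension and equal type, since maximal simplices are preserved (Lemma \ref{sec:gkm-graphs-2}) and the product decomposition of $P$ is recovered from the face poset. So $g$ induces a permutation of the sphere factors of $Z_P$ together with, inside each factor, a permutation of homogeneous coordinates (for an $S^{2n_i+1}\subset\mathbb C^{n_i+1}$ this is a coordinate permutation of $\mathbb C^{n_i+1}$, which is an isometry of the round sphere; similarly for $S^{2m_j}$ one permutes the $\mathbb C$-coordinates and fixes the real axis). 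Call this isometry $\tilde g:Z_P\to Z_P$. By construction $\tilde g$ realises $P_{\sigma_g}$ in the sense that $\tilde g(t\cdot z)=P_{\sigma_g}(t)\cdot\tilde g(z)$ for $t\in\check T^m$, $z\in Z_P$, and under the orbit map $Z_P\to Z_P/\check T^m=P$ it descends to the given $G$-action on $P$.

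Then I would check descent to $\tilde O$. Since $P_{\sigma_g}(K)=K$, the isometry $\tilde g$ descends to a homeomorphism $\bar g:Z_P/K\to Z_P/K$, i.e.\ a self-map of $\tilde O$, which normalises the residual $\check T^n=\check T^m/K$-action and covers the $G$-action on $P=\tilde O/\check T^n$. It remains to see that $\bar g$ agrees with the previously constructed $G$-action on $\tilde O$; but both are continuous, both cover the same face-preserving homeomorphism of $P$, and both are equivariant with respect to the same automorphism $A_g^*$ of $\check T^n$ — and a map $\tilde O\to\tilde O$ of this kind is determined by its effect on $P$ and on $\check T^n$ together with a choice over each facet, which is pinned down by the compatibility of the labels; so $\bar g$ is exactly the action from Theorem \ref{thm:amodel}. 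Finally, one verifies $g\mapsto\tilde g$ is a homomorphism (the permutations $\sigma_g$ multiply correctly because $g\mapsto A_g$ is a homomorphism by Lemma \ref{sec:conjecture}, possibly after fixing a consistent choice of signs/coordinate orderings), giving a genuine $G$-action on $Z_P$ by isometries of the product-of-round-spheres metric. Pushing this metric down to $\tilde O=Z_P/K$ yields a $\check T^n$- and $G$-invariant metric on $\tilde O$ which has non-negative curvature, since Riemannian submersions do not decrease sectional curvature and the round product metric on $Z_P$ is non-negatively curved; this completes Theorem \ref{thm:amodel}.

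\textbf{The main obstacle} I anticipate is the bookkeeping that $\sigma_g$ respects the product structure of $P$ and hence of $Z_P$ — i.e.\ that $g$ does not mix a simplex factor with a suborbifold factor, and matches only factors of the same dimension — and the accompanying sign/orientation choices needed so that $g\mapsto\tilde g$ is an honest homomorphism rather than merely a projective one; everything else is a routine transport of the submersion picture $Z_P\to\tilde O\to P$.
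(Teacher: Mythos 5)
Your proposal is correct and follows essentially the same route as the paper: you lift the $G$-action combinatorially via the facet permutation together with its compatibility with $\varphi$ and the $\check{T}^n$-action of Lemma \ref{sec:conjecture}, and then realise this combinatorial data by isometries of $Z_P$ viewed as a product of round spheres, which is exactly the paper's invocation of Weyl-group elements of the isometry group. The only caveat — which you already flag under sign choices — is that a pure coordinate permutation need not satisfy $\varphi\circ P_{\sigma_g}=A_g^*\circ\varphi$, since the induced isomorphism $S^1_i\to S^1_{\sigma_g(i)}$ may be inversion; one must allow composing with complex conjugation in individual coordinates, which is still an isometry and is precisely why the paper speaks of Weyl-group elements rather than mere permutations.
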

\begin{proof}
  Let \(F_1,\dots,F_m\) be the facets of \(P\). Then the moment angle complex associated to \(P\) is given by
  \begin{equation*}
    Z_P=P\times T^m/\sim,
  \end{equation*}
where \(T^m=\prod_{i=1}^m S^1_i\) and \((x_1,t_1)\sim (x_2,t_2)\) if and only if \(x_1=x_2\) and \[t_1t_2^{-1}\in \prod_{i;\; x_1\in F_i}S_i^1.\]

We have a map \(\varphi:T^m\rightarrow \check{T}^n\) such that the restriction of \(\varphi\) to \(S^1_i\) is an isomorphism \(S^1_i\rightarrow \lambda(F_i)\). With this map we can realise \(\tilde{O}\) as a quotient of an almost free action of an abelian Lie group on \(Z_P\).

We can lift the \(G\)-action on \(\tilde{O}\) to a  \(G\)-action on \(Z_P\) by requiring that for \(g\in G\), \(g(S^1_i)=S^1_j\) with \(gF_i=F_j\) and that the following diagram commutes:

\begin{equation*}
  \xymatrix{S^1_i\ar[r]_g\ar[d]_\varphi&S^1_j\ar[d]_\varphi\\
\lambda(F_i)\ar[r]_g&\lambda(F_j)}
\end{equation*}

Now note that in the case that \(P\) is a product of \(\Delta^{n_i}\) and \(\Sigma^{m_i}\), \(Z_P\) is equivariantly diffeomorphic to a product of round spheres of radius one.
Moreover, note that the combinatorial type of the \(G\)-action on \(Z_P\), that is, the action of \(G\) on \(\mathcal{P}(P)\times T^m\) can be realised by elements of the Weyl-group of \(\left(\prod_i O(2n_i+2)\times \prod_i O(2m_i+1)\right)\rtimes H\), where \(H\) is the group of permutations of the factors of the same dimension in the product of spheres.
Here \(\mathcal{P}(P)\) denotes the face poset of \(P\) as defined in Section~\ref{sec:torus-manif-orbif}.
Therefore the claim follows. 
\end{proof}

\begin{proof}[Proof of Theorem \ref{sec:introduction-1}]
In the special case that the graph \(\Gamma\) is the vertex-edge graph of a product \(\prod_{i}\Delta^{n_i}\times \prod_j\Sigma^{m_i}\), the covering of graphs \(\tilde\Gamma\rightarrow \Gamma\) is trivial.
Hence the deck transformation group \(G\) is the trivial group.
From the proof of Theorem~\ref{thm:amodel} it then follows that the group \(G\) in that theorem is trivial.
 Therefore Theorem~\ref{sec:introduction-1} follows from Theorem~\ref{thm:amodel}. Note that for this theorem no assumptions on rational ellipticity or non-negative curvature are needed, as they are only used to obtain information on the structure of the GKM graph; here we instead assume the graph to be of the simplest possible type.
\end{proof}

\section{Upper bound for the dimension of the acting torus}
\label{sec:upper-bound-dimens}

In this section we give an upper bound for the dimension of a torus which can act on a non-negatively curved GKM$_4$ orbifold \(O\).
Moreover, we give an example of a non-negatively curved GKM$_4$-manifold whose rational cohomology ring is not isomorphic to that of a torus manifold.
This shows that in  Theorem \ref{sec:introduction-3} the group \(G\) is not always trivial.

We denote by \(T\) the torus acting on \(O\). Otherwise we use the same notation as in the previous section.

\begin{theorem}
  \label{sec:upper-bound-dimens-4}
  Let \(O\) be a GKM$_4$-orbifold and \(T\) be the torus acting on \(O\).
  
  Then the dimension of a maximal torus \(T'\) to which the \(T\)-action on \(O\) can be extended is bounded from above by
\[a-b_2(O),\]
where \(a\) is the number of orbits of the \(G\)-action on the set of facets of \(P\).
\end{theorem}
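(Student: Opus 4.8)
The plan is to show that the maximal torus $T'$ extending the $T$-action on $O$ corresponds, on the level of GKM graphs, to a maximal GKM extension of $(\Gamma,\alpha)$, and then to bound the dimension of such an extension by a lattice/cohomology count on the model. First I would argue that if the $T$-action extends to a $T'$-action then $H^{\mathrm{odd}}(O)=0$ and the fixed point set stays finite, so $O$ with the $T'$-action is still GKM$_4$, and its GKM graph $(\Gamma,\alpha')$ has the same underlying graph with connection as $(\Gamma,\alpha)$ and admits $(\Gamma,\alpha)$ as a quotient via the projection $\mathfrak{t}'^*\to\mathfrak{t}^*$; that is, $(\Gamma,\alpha')$ is an extension of $(\Gamma,\alpha)$ in the sense of Definition \ref{defn:gkmextension}. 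Conversely any GKM extension gives the equivariant cohomology of a torus of the corresponding dimension, and the largest torus that can act is governed by the largest extension. Thus $\dim T' \leq \dim T^n = n$, but we want the sharper bound $a - b_2(O)$.

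Next I would set up the relevant lattices. Pulling back along the covering $\pi:\tilde\Gamma\to\Gamma$ and applying Lemma \ref{sec:gkm-graphs-5}, the lifted graph $(\tilde\Gamma,\tilde\alpha')$ extends to the GKM$_n$ graph $(\tilde\Gamma,\beta)$ over $T^n$; here $n = \sum_i n_i + \sum_i m_i = \dim P$, and $\beta$ is encoded by the labels $\lambda(F_1),\dots,\lambda(F_m)$ of the $m$ facets of $P$. By Lemma \ref{sec:conjecture} the group $G$ acts on these facet labels compatibly with its action on $P$, and by construction the label $\lambda(F)$ only depends on the $G$-orbit of $F$ up to the $G$-action on $\check T^n$. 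The key observation is that a torus $T'$ acting on $O$ and extending $T$ must have weights at $v_0$ that are $G$-invariant combinations — more precisely, the extended GKM graph over $T'$ must descend from an extension of $(\tilde\Gamma,\tilde\alpha')$ whose facet-labeling map $\mathcal{F}(P)\to \mathfrak{t}'^*$ is $G$-equivariant — so that the effective rank available is the rank of the sublattice of $\mathfrak{t}'^*$ spanned by labels of $G$-orbit representatives, which is at most $a$.

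Finally I would bring in the cohomological constraint to subtract $b_2(O)$. The facet Poincaré duals $v_1,\dots,v_m$ span $H^2_{T^n}(\tilde O;\mathbb{Q})$, and $H^2_{T'}(O;\mathbb{Q})$ is the corresponding $G$-invariant quotient, which has dimension $a$; on the other hand the restriction $H^2_{T'}(O)\to H^2(O)$ is surjective with kernel $H^2(BT')$ (since $H^{\mathrm{odd}}(O)=0$), so
\begin{equation*}
\dim T' = \dim H^2(BT') = \dim H^2_{T'}(O) - \dim H^2(O) \leq a - b_2(O).
\end{equation*}
I expect the main obstacle to be the middle step: carefully justifying that extending the $T$-action forces the facet-labeling of $P$ to factor through the $G$-orbit set, i.e. that one genuinely gets at most $a$ independent new circle directions rather than $m$. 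This requires combining the uniqueness in Lemma \ref{sec:gkm-graphs-5} with the $G$-equivariance from Lemmas \ref{sec:conjecture-1}–\ref{sec:gkm-manifolds-with} to see that any extension over $T'$ must be constant on $G$-orbits of facets, together with the remark that for a manifold $I^*$ is already spanned by the weights at a single vertex so no subtlety from the orbifold rescaling enters. Once that is in place, the bound follows from the two displayed identities above.
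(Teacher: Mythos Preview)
Your overall strategy is close to the paper's, but the final cohomological step is where it breaks. You write that ``$H^2_{T'}(O;\mathbb{Q})$ is the corresponding $G$-invariant quotient, which has dimension $a$,'' and then apply the short exact sequence for $T'$ acting on $O$. But there is no direct identification of $H^2_{T'}(O)$ with $H^2_{T^n}(\tilde O)^G$: the facet Poincar\'e duals $v_1,\dots,v_m$ form a basis of $H^2_{T^n}(\tilde O)$, not of $H^2_{T'}(\tilde O)$ or of $H^2_{T'}(O)$, and $T^n$ does not act on $O$ at all. So the inequality $\dim H^2_{T'}(O)\leq a$ is unjustified as stated.

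The paper's argument sidesteps this by never computing $H^2_{T'}(O)$. The entire proof consists of two moves. First, the assertion that $T'$ embeds in $(\check T^n)^G$; this is exactly what you call the ``main obstacle,'' and it follows from the uniqueness of the GKM$_n$ extension in Lemma~\ref{sec:gkm-graphs-5} together with Lemma~\ref{sec:gkm-manifolds-with} (your sketch of this part is essentially right). Second, one bounds $\dim(\check T^n)^G=\dim H^2(BT^n)^G$ by taking $G$-invariants of the short exact sequence
\[
0\to H^2(BT^n)\to H^2_{T^n}(\tilde O)\to H^2(\tilde O)\to 0
\]
for the \emph{model} $\tilde O$. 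Since $G$ permutes the facet classes up to sign, $\dim H^2_{T^n}(\tilde O)^G\leq a$, and since $\dim H^2(\tilde O)^G=b_2(O)$, one gets $\dim T'\leq \dim H^2(BT^n)^G\leq a-b_2(O)$. In short: work with $T^n$ on $\tilde O$ and take $G$-invariants there, rather than trying to control $H^2_{T'}(O)$ directly.
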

\begin{proof}
  \(T'\) is a subtorus of \((T^n)^G\), where \(G\) acts on \(T^n\) as in Lemma \ref{sec:conjecture}.
  We have a short exact sequence of \(G\)-representations
  \begin{equation*}
    0\rightarrow H^2(BT^n)\rightarrow H^2_{T^n}(\tilde{O})\rightarrow H^2(\tilde{O})\rightarrow 0.
  \end{equation*}
 \(H^2_{T^n}(\tilde{O})\) has a basis \(v_1,\dots,v_n\)  consisting out of the Poincar\'e duals of the facial suborbifolds of \(\tilde{O}\).
 Since the \(G\)-action sends facial suborbifolds to facial submanifolds, the \(\pm v_i\) are permuted by the \(G\)-action.
Hence we have \(\dim H^2_{T^n}(\tilde{O})^G\leq a\). Since \(b_2(O)=\dim H^2(\tilde{O})^G\), it follows that
\begin{equation*}
  \dim T'\leq \dim H^2(BT^n)^G=\dim H^2_{T^n}(\tilde{O})^G - \dim H^2(\tilde{O})^G\leq a-b_2(O).
\end{equation*}
\end{proof}

Next we want to give an example of a GKM$_{n-1}$-manifold of dimension \(2n\) where the action does not extend to an effective action of an \(n\)-dimensional torus.

\begin{example}
  \label{sec:upper-bound-dimens-3}
Let \(P\) be \(I^n=[-1,+1]^n\) with facets \(F_{1,\pm 1},\dots, F_{n,\pm 1}\) such that \(F_{i,+1}\) and \(F_{i,-1}\) belong to the same factor.

Let \(e_1,\dots,e_n\) be the standard basis of \(\mathbb{R}^n\) and set
\(\lambda(F_{n,\pm 1})=\pm e_n\), \(\lambda(F_{i,\pm 1})=e_i\pm e_n\) for \(i=1,\dots,n-1\).
Then the pair \((P,\lambda)\) defines a quasitoric manifold \(M\) over \(P\) by
\( M=(P\times T)/\sim
\),
where \(T=\mathbb{R}^n/\mathbb{Z}^n\) and \((x_1,t_1)\sim (x_2,t_2)\) if and only if \(x_1=x_2\) and \[t_1-t_2\in (\bigoplus_{x_1\in F_{ij}} \mathbb{R}\lambda(F_{ij}))/\bigoplus_{x_1\in F_{ij}} \mathbb{Z}\lambda(F_{ij})).\]
The \(T\)-action on the second factor of \(P\times T\) induces the \(T\)-action on \(M\). The weights of the \(T\)-representation at the fixed point \(F_{1,\epsilon_1}\cap\dots\cap F_{n,\epsilon_n}\) are given by
\(
e_1^*,\dots,e^*_{n-1},\epsilon_ne_n^*-\epsilon_n\sum_{i\neq n} \epsilon_i e_i^*
\).

The map
\begin{align*}
  P\times T&\rightarrow P\times T& (x,\sum_{i}\alpha_ie_i)&\mapsto (-x,\sum_{i\neq n}\alpha_i e_i-\alpha_ne_n+\frac{1}{2}e_1)
\end{align*}
induces a free involution \(\tau\) on \(M\) which is \(T\)-equivariant with respect to the involution
\begin{align*}
  \phi:T&\rightarrow T&\sum_i\alpha_ie_i&\mapsto \sum_{i\neq n}\alpha_i e_i-\alpha_n e_n.
\end{align*}
Note, that \(\tau\) is orientation preserving if and only if \(n\) is odd.

Hence, \(\tau\) commutes with the action of the torus \(T'=\mathbb{R}^{n-1}/\mathbb{Z}^{n-1}\) and \(N=M/\tau\) becomes a GKM$_{n-1}$-manifold with the action of \(T'\).
Lemma \ref{sec:upper-bound-dimens-1} below shows that \(T'\) is a maximal torus acting on \(N\).
Moreover, Corollary \ref{sec:upper-bound-dimens-2} shows that \(N\) does not have the rational cohomology of a torus manifold.
\end{example}

\begin{lemma}
  \label{sec:upper-bound-dimens-1}
If, in the situation of Example~\ref{sec:upper-bound-dimens-3}, \(n\geq 3\) then we have \(b_2(N)=1\) and there is an element \(v\in H^2(N)\) with \(v^2\neq 0\). In particular, for all non-zero  \(v'\in H^2(N)\) we have \(v'^2\neq 0\).
\end{lemma}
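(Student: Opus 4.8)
The statement to be established is Lemma~\ref{sec:upper-bound-dimens-1}: for the manifold $N = M/\tau$ from Example~\ref{sec:upper-bound-dimens-3} with $n\geq 3$, we have $b_2(N)=1$ and there exists $v\in H^2(N)$ with $v^2\neq 0$; consequently every nonzero class in $H^2(N)$ squares to something nonzero. The plan is to compute $H^*(N;\Q)$ from $H^*(M;\Q)$ using the $\tau$-action, exploiting that $\tau$ is a free involution so that $H^*(N;\Q)\cong H^*(M;\Q)^{\Z/2}$, the $\tau$-invariant part. Since $M$ is a quasitoric manifold over the cube $I^n$ with the explicit characteristic function $\lambda$ given in the example, its rational cohomology ring is the face ring of $I^n$ modulo the linear system of parameters determined by $\lambda$, which is very explicit.

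\medskip

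\textbf{Step 1: the cohomology ring of $M$.} First I would write $H^*(M;\Q) = \Q[x_{1,\pm1},\dots,x_{n,\pm1}]/(\mathcal{I}+\mathcal{J})$, where $\mathcal{I}$ is the Stanley--Reisner ideal of $I^n$ (generated by $x_{i,+1}x_{i,-1}$ for each $i$, since $F_{i,+1}$ and $F_{i,-1}$ are the only pairs of non-intersecting facets) and $\mathcal{J}$ is the linear ideal generated by $\sum_{i,\epsilon}\langle \lambda(F_{i,\epsilon}),e_j^*\rangle x_{i,\epsilon}$ for $j=1,\dots,n$. Using $\lambda(F_{n,\pm1})=\pm e_n$, $\lambda(F_{i,\pm1})=e_i\pm e_n$, the linear relations let one eliminate all $x_{i,+1}$ in terms of $x_{i,-1}=:x_i$, and one finds the ring is generated by $x_1,\dots,x_n$ subject to relations coming from $\mathcal I$. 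A direct computation (this is the routine part) shows that $H^2(M;\Q)$ has some dimension and, crucially, that the surviving generators satisfy relations making products of distinct $x_i$'s nonzero; in particular there is a degree-$2$ class whose square is nonzero. Since $M$ is a quasitoric manifold it has no odd cohomology and $b_2(M)=n$.

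\medskip

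\textbf{Step 2: the $\tau$-action on cohomology and passage to $N$.} Next I would determine how $\tau$ acts on $H^2(M;\Q)$. The map on $P\times T$ sends $x\mapsto -x$, which on the facet poset of $I^n$ swaps $F_{i,+1}\leftrightarrow F_{i,-1}$ for every $i$; combined with the torus twist $\phi$, this determines a permutation/sign action of $\tau^*$ on the basis of facial Poincar\'e duals. Computing the fixed subspace $H^2(M;\Q)^{\Z/2}$ gives $b_2(N) = \dim H^2(M;\Q)^{\Z/2}$, which I expect to come out equal to $1$ for $n\geq 3$ (the single invariant class being, up to scalar, $\sum_i (x_{i,+1}+x_{i,-1})$ or a similar symmetric combination, after accounting for the $\tfrac12 e_1$ shift). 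Because $\tau$ is free and $N=M/\tau$, the transfer argument gives $H^*(N;\Q)\cong H^*(M;\Q)^{\Z/2}$ as rings, so the unique (up to scalar) generator $v$ of $H^2(N;\Q)$ is the image of the invariant class, and $v^2$ is computed inside $H^4(M;\Q)^{\Z/2}$. The final clause "for all nonzero $v'\in H^2(N)$, $v'^2\neq 0$" is then immediate: since $b_2(N)=1$, every such $v'$ is a nonzero scalar multiple of $v$, so $v'^2 = c^2 v^2 \neq 0$.

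\medskip

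\textbf{Main obstacle.} The only real work is Step~2: verifying that the fixed subspace of $\tau^*$ on $H^2(M;\Q)$ is exactly one-dimensional and that the invariant generator has nonzero square. The subtlety is the half-translation $\tfrac12 e_1$ in the definition of $\tau$, which makes $\tau$ not a lift of the naive antipodal symmetry but a genuinely ``twisted'' involution; one must be careful that this affects the $T$-equivariance via $\phi$ but, after passing to ordinary (non-equivariant) cohomology, the induced action on $H^2(M;\Q)$ is still governed by the combinatorial facet-swap together with the sign coming from $\phi$ acting on $e_n$. I would handle this by working equivariantly: compute $H^*_{T}(M;\Q)$, identify the $\tau$-$\phi$-twisted action there, pass to the quotient by $H^{\geq 2}(BT')$ (using that $\tau$ commutes with $T'$), and only at the end read off $H^*(N;\Q)$. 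The hypothesis $n\geq 3$ enters precisely in ensuring that the cube is high-dimensional enough that the symmetric invariant class is nonzero and has nonzero self-intersection; for $n\leq 2$ the conclusion can fail, which is why it is assumed.
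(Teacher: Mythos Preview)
Your plan is essentially the same as the paper's: identify $H^*(N;\Q)$ with the $\Z/2$-invariants of $H^*(M;\Q)$ via transfer, and compute the latter from the Davis--Januszkiewicz presentation of $H^*(M)$. A couple of points where the paper is sharper than your outline:

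\begin{itemize}
\item For $b_2(N)=1$ the paper does not compute the $\tau$-action on $H^2(M)$ directly, but uses the short exact sequence of $\Z/2$-modules
\[
0\longrightarrow H^2(BT)\longrightarrow H^2_T(M)\longrightarrow H^2(M)\longrightarrow 0,
\]
where the action on $H^2(BT)$ is via $\phi$ (fixed space of dimension $n-1$) and on $H^2_T(M)$ by the facet swap $v_{i,\epsilon}\mapsto v_{i,-\epsilon}$ (fixed space of dimension $n$). Over $\Q$ the sequence of invariants stays exact, giving $b_2(N)=n-(n-1)=1$ with no further computation. Your direct approach works too, but note that your guessed invariant $\sum_i(x_{i,+1}+x_{i,-1})$ reduces, after imposing the linear relations $[v_{i,+1}]=-[v_{i,-1}]$ for $i<n$, exactly to $[v_{n,+1}+v_{n,-1}]$, which is the paper's choice of $v$.
\item For $v^2\neq 0$ you only say ``$v^2$ is computed inside $H^4(M)^{\Z/2}$''; the paper's device is to multiply up to top degree: one shows $v^2[v_{n,1}]^{n-2}=(-2)^{n-1}(n-1)!\prod_{i=1}^n[v_{i,1}]\neq 0$, using the Stanley--Reisner relations. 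You should indicate some such check rather than leaving it implicit.
\item Your concern about the half-translation $\tfrac12 e_1$ is unnecessary: it is homotopic to the identity along the torus fiber, hence acts trivially on cohomology; its only role is to make $\tau$ free. Also, the hypothesis $n\geq 3$ is not actually needed for the cohomological computation itself (it goes through for $n=2$ as well); it is there because the example is set up for $N$ to be a GKM$_{n-1}$-manifold.
\end{itemize}
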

\begin{proof}
  First note that \(H^2(N;\mathbb{Q})\cong H^2(M;\mathbb{Q})^{\mathbb{Z}/2}\).
  Hence it suffices to describe the \(\mathbb{Z}/2\)-action on \(H^2(M)\).

  We have a short exact sequence of \(\mathbb{Z}/2\)-representations
  \begin{equation*}
    0\rightarrow H^2(BT)\rightarrow H^2_T(M)\rightarrow H^2(M)\rightarrow 0.
  \end{equation*}
  Hence it suffices to describe the \(\mathbb{Z}/2\)-actions on \(H^2(BT)\) and \(H^2_T(M)\).
  The first action is induced by \(\phi\). Hence we have \(\dim H^2(BT)^{\mathbb{Z}/2}=n-1\).
  The action on \(H^2_T(M)\) can be described as follows. We have an isomorphism
  \begin{equation*}
    H^2_T(M)\cong \bigoplus_{i=1}^n(\mathbb{Q}v_{i,1}\oplus \mathbb{Q}v_{i,-1}),
  \end{equation*}
Here, after possible changes of the orientations of the facial submanifolds, the \(\mathbb{Z}_2\) action is given by \(v_{i,\epsilon}\mapsto v_{i,-\epsilon}\) for \(i=1,\dots,n\) because \(\tau\) interchanges opposite facial submanifolds of \(M\) and the \(v_{i,\epsilon}\) are the equivariant Poincar\'e duals of these manifolds.

Therefore we have \(\dim H^2_T(M)^{\mathbb{Z}/2}=n\) and the first claim follows.

To see the second claim note that \(v_{n,-1}+v_{n,+1}\in H^2_T(M)^{\mathbb{Z}/2\mathbb{Z}}\) maps to a non-zero element \(v\in H^2(N)\cong H^2(M)^{\mathbb{Z}/2\mathbb{Z}}\).
Moreover, by \cite[Theorem 4.14]{DavisJanuszkiewicz}, there are the following relations in \(H^*(M)\):
\begin{align*}
  [v_{i,1}]&=-[v_{i,-1}]&&\text{for }i=1,\dots,n-1\\
  [v_{n,1}]&=[v_{n,-1}]-\sum_{i=1}^{n-1}([v_{i,1}]-[v_{i,-1}])\\
  [v_{i,1}][v_{i,-1}]&=0&&\text{for }i=1,\dots,n\\
  \prod_{i=1}^n[v_{i,\pm 1}]&\neq 0.
\end{align*}
Therefore we see
\begin{align*}
  v^2[v_{n,1}]^{n-2}&=[v_{n,1}]^n=[v_{n,1}](-2)^{n-1}(\sum_{i=1}^{n-1}[v_{i,1}])^{n-1}\\
  &=(-2)^{n-1}(n-1)!\prod_{i=1}^{n}[v_{i,1}]\neq 0 \in H^{2n}(M).
\end{align*}
Hence we have \(v^2\neq 0\).
\end{proof}

\begin{cor}
  \label{sec:upper-bound-dimens-2}
  Assume \(n\geq 5\).
  Then the manifold \(N\) from Example \ref{sec:upper-bound-dimens-3} does not have the rational cohomology of a torus manifold.
\end{cor}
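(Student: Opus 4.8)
The plan is to obtain a contradiction by comparing total Betti numbers, equivalently Euler characteristics. Suppose, to the contrary, that $H^*(N;\mathbb{Q})$ is isomorphic as a graded ring to $H^*(X;\mathbb{Q})$ for some torus manifold $X$ of dimension $2n$. The first step is to record that $H^{\mathrm{odd}}(N;\mathbb{Q})=0$: since $\tau$ is a free involution we have $H^*(N;\mathbb{Q})\cong H^*(M;\mathbb{Q})^{\mathbb{Z}/2}$, and $M$, being quasitoric, has cohomology concentrated in even degrees (alternatively this is immediate from $N$ being a GKM$_{n-1}$ manifold). Hence $H^{\mathrm{odd}}(X;\mathbb{Q})=0$ as well.

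The second step uses the structure theory of Masuda--Panov: a torus manifold with vanishing odd rational cohomology has its rational cohomology ring generated in degree two. By Lemma~\ref{sec:upper-bound-dimens-1} we know $b_2(N)=1$, hence $b_2(X)=1$, so $H^*(X;\mathbb{Q})$ is a quotient of a polynomial ring in one degree-two generator $v$, i.e.\ $H^*(X;\mathbb{Q})\cong\mathbb{Q}[v]/(v^{k+1})$ with $k$ maximal such that $v^k\neq 0$. Since $X$ is a closed orientable $2n$-manifold, $H^{2n}(X;\mathbb{Q})\cong\mathbb{Q}$ while $H^{2n+2}(X;\mathbb{Q})=0$; as $H^{2n}(X;\mathbb{Q})$ is spanned by $v^n$, this forces $k=n$. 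Therefore $b_{2j}(X)=1$ for $0\leq j\leq n$ and all other Betti numbers of $X$ vanish, so $b(X)=\chi(X)=n+1$.

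The third step computes $\chi(N)$. The quotient map $M\to N$ is a double cover, because $\tau$ is a free involution, so $\chi(N)=\tfrac12\chi(M)$; and $M$, being quasitoric over $I^n$, has exactly $2^n$ fixed points, hence $\chi(M)=2^n$. Thus $\chi(N)=2^{n-1}$, and since $H^{\mathrm{odd}}(N;\mathbb{Q})=0$ this also equals $b(N)$. Comparing with the second step yields $n+1=2^{n-1}$, which is false for every $n\geq 4$, in particular for $n\geq 5$. This contradiction proves the corollary.

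The only ingredient that is not completely elementary is the Masuda--Panov theorem that a torus manifold with vanishing odd rational cohomology has its cohomology ring generated in degree two; combined with $b_2(X)=1$ this pins down $H^*(X;\mathbb{Q})\cong\mathbb{Q}[v]/(v^{n+1})$, and I expect this identification to be the only real step. Everything else is routine bookkeeping with Euler characteristics, with the double cover $M\to N$, and with the fact that quasitoric manifolds over $I^n$ have $2^n$ fixed points and no odd cohomology.
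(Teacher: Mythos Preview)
Your argument is correct and substantially simpler than the paper's. Two small points to tighten:

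First, the generation-in-degree-two statement. Masuda--Panov's theorem is stated for \emph{integral} cohomology; you only know $H^{\mathrm{odd}}(X;\mathbb{Q})=0$. The rational statement you need still holds, but the clean justification goes through the GKM description rather than by quoting Masuda--Panov directly: since $X$ is a torus manifold with $H^{\mathrm{odd}}(X;\mathbb{Q})=0$, Theorem~\ref{thm:gkm} applies and identifies $H^*_T(X;\mathbb{Q})$ with the graph cohomology of the torus graph $\Gamma$; by Maeda--Masuda--Panov \cite{MaedaMasudaPanov} this graph cohomology is the face ring of the associated simplicial poset, which is generated in degree two. Hence so is $H^*(X;\mathbb{Q})=H^*_T(X;\mathbb{Q})/(H^{>0}(BT))$.

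Second, you assume $\dim X=2n$ without justification. This is harmless: from $H^*(X;\mathbb{Q})\cong H^*(N;\mathbb{Q})$ and $\dim N=2n$ you get $H^j(X;\mathbb{Q})=0$ for $j>2n$, hence $k\leq n$ in your notation, so $b(X)=k+1\leq n+1<2^{n-1}$ for $n\geq 4$, and the contradiction survives. (For odd $n$ the manifold $N$ is orientable and $\dim X=2n$ follows exactly as you wrote; for even $n$ the inequality is all you need.)

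The paper's proof is quite different: it applies the covering theorem (Theorem~\ref{thm:covering}) to the GKM graph of the putative torus manifold $O$, narrows the covering graph $\tilde\Gamma$ to $I^n$ or $I^{n-2}\times\Sigma^2$ by counting vertices, and then eliminates each case by a hands-on analysis of the $\mathbb{Z}/2$-action on the Poincar\'e duals of the characteristic submanifolds, ultimately using the relation $v^2\neq 0$ from Lemma~\ref{sec:upper-bound-dimens-1}. Your route avoids the covering machinery entirely and uses only the face-ring description of torus-manifold cohomology plus a Betti-number count; it is shorter and uses nothing specific to this paper. The paper's approach, on the other hand, stays within its own framework and exercises the covering construction on a concrete example.
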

\begin{proof}
  Assume that there is a torus manifold \(O\) with the same rational cohomology as \(N\).
  Then because the total Betti number \(2^{n-1}\) of \(N\) is equal to the number of vertices in the GKM graph \(\Gamma\) of \(O\), the covering graph \(\tilde{\Gamma}\) must be the vertex-edge graph of \(I^n\) or \(I^{n-2}\times \Sigma^2\).

  In the second case the number of vertices in \(\Gamma\) and \(\tilde{\Gamma}\) are equal. Therefore the covering must be trivial so that we have \(\tilde{\Gamma}=\Gamma\).
    Therefore \(H^2_T(O;\mathbb{Q})\) has dimension \(2n-2\).
    From the exactness of
    \begin{equation*}
      0\rightarrow H^2(BT^n)\rightarrow H^2_T(O)\rightarrow H^2(O)\rightarrow 0,
    \end{equation*}
    we now get a contradiction to Lemma~\ref{sec:upper-bound-dimens-1}.

    Therefore assume that we are in the first case.
    Then the order of the deck transformation group \(G\)  of the covering \(\tilde{\Gamma}\rightarrow \Gamma\) is two. Let \(g\in G\) be the non-trivial element.
    Let \(\tilde{O}\) be the torus manifold constructed from \(O\) in Theorem~\ref{thm:amodel}.
    Denote by \(v_{i,\pm 1}\), \(i=1,\dots,n\), the equivariant Poincar\'e duals of the facial submanifolds  \(\tilde{O}_{1,\pm 1},\dots,\tilde{O}_{1, \pm 1}\) of \(\tilde{O}\).
    Then as in the proof of Theorem~\ref{sec:upper-bound-dimens-4}, we see that for each \(i\in \{1,\dots,n\}\) there is an \(j_i\in \{1,\dots,n\}\) and \(\epsilon\in \pm 1\) such that \(g^*v_{i,\pm 1}=\pm v_{j_i,\pm \epsilon_i}\).
    Since \(g\) has order two, we can orient the facial submanifolds in such a way that the minus sign only appears when \(j_i=i\) and \(\epsilon_i=1\).

    However, because the \(G\)-action commutes with the torus action, \(g\) preserves the orientation of the normal bundle of \(\tilde{O}_{i,\pm 1}\).
    Hence we also have a \(+\)-sign in this case.
    Moreover, by the same reason, the labeling of the facets of \(I^n\) is \(G\)-invariant.
    Because the labels of intersecting facets are linearly independent, it follows that \(j_i=i\) for all \(i\).

    Form the exactness of
    \begin{equation*}
      0\rightarrow H^*(BT^n)\rightarrow H^*_T(\tilde{O})^G\rightarrow H^2(O)\rightarrow 0
    \end{equation*}
    and Lemma~\ref{sec:upper-bound-dimens-1} it follows, that the \(G\)-action on the set of facets of \(I^n\) has two fixed points and \(n-1\) non-trivial orbits.

    Assume that the \(v_{i,\pm 1}\), \(i=1,\dots,n-1\) are not fixed by \(g\).
    Let \(\lambda_{i,\pm 1}\in \mathbb{Z}^n\), \(i=1,\dots,n\), be primitive generators of the Lie algebra of the circle subgroups of \(T^n=\mathbb{R}^n/\mathbb{Z}^n\) that fix \(\tilde{O}_{i,\pm 1}\). Then for \(i=1,\dots,n-1\) we have \(\lambda_{i,1}=\lambda_{i,-1}\). Moreover, \(\lambda_{1,1},\dots,\lambda_{n,1}\) is a basis of \(\mathbb{Z}^n\) and \[\lambda_{n,-1}=\pm\lambda_{n,1}+\sum_{i=1}^{n-1}\mu_i\lambda_{i,1},\] with some integers \(\mu_i\).
    Therefore, by \cite[Theorem 4.14]{DavisJanuszkiewicz}, we have the following relations in \(H^*(\tilde{O})\):
    \begin{align*}
        [v_{i,1}]&=-[v_{i,-1}]-\mu_i[v_{n,-1}]&&\text{for }i=1,\dots,n-1\\
  [v_{n,1}]&=\pm[v_{n,-1}]\\
  [v_{i,1}][v_{i,-1}]&=0&&\text{for }i=1,\dots,n.
    \end{align*}
  In particular, we see that \([v_{n,1}]\neq 0\in H^2(\tilde{O})^{\mathbb{Z}/2\mathbb{Z}}\) and \([v_{n,1}]^2=0\).
    Therefore we have a contradiction to Lemma~\ref{sec:upper-bound-dimens-1}.
\end{proof}

\section{GKM manifolds with invariant almost complex structures}
\label{sec:gkm-manifolds-with-2}

In this section we let \(M^{2n}\) be a GKM$_4$ manifold which admits an invariant almost complex structure whose GKM graph \(\Gamma\)  has small three-dimensional faces.
Denote by \(\tilde\Gamma\) the covering graph of \(\Gamma\) constructed in Theorem~\ref{thm:covering} and \(G\) the deck transformation group.
We let \(\tilde{\alpha}\) be the weights of the edges of \(\tilde\Gamma\).
Moreover, \(\beta\) are the weights of the extension to an GKM$_n$ graph of \((\tilde\Gamma,\tilde\alpha)\).

We will show in this section that under the above conditions \(M\) has the same rational cohomology as a generalised Bott manifold.
To do so we first show that \((\tilde{\Gamma},\beta)\) is the GKM graph of a generalised Bott manifold.
Then in a second step we show that the covering \(\tilde{\Gamma}\rightarrow \Gamma\) is trivial.
Together these two steps imply the above claim, which is stated in Corollary~\ref{sec:gkm-manifolds-with-3}.

The combinatorial consequence of the existence of an almost complex structure is that the weights of the oriented edges of the GKM graph have preferred signs.
By going through the arguments one then also sees that the weights of the extensions constructed in Section 2 have preferred signs.

Denote by \(\hat\alpha\) and \(\hat\beta\) the weights \(\tilde\alpha\) and \(\beta\) with the preferred sign, respectively.

We will show that in this case the GKM graph of \(M\) is a product of simplices.
In particular, by the following theorem, \(M\) has the same rational cohomology as a generalised Bott manifold.

\begin{theorem}\label{thm:graphofBott}
With the notation as above we have that \((\tilde\Gamma,\beta)\) is the GKM graph of a generalised Bott manifold.
\end{theorem}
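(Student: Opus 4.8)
The plan is to prove this in two stages: first show that the covering graph $\tilde\Gamma$ is combinatorially a product of simplices $\prod_i\Delta^{n_i}$ (no $\Sigma$-factors), and then use the classification of almost complex torus manifolds over such products to identify the GKM graph of $(\tilde\Gamma,\beta)$ with that of a generalised Bott manifold. For the first stage, the key observation is that the existence of an invariant almost complex structure gives \emph{globally consistent signs} for the weights $\hat\alpha$, and these signs are incompatible with a $\Sigma^2$-face. Indeed, if two edges $e,e'$ at a vertex $v$ span a biangle $\Sigma^2$, the weight of the connection image forces, via Equation \eqref{eq:1} with $p=1$, a relation of the form $\hat\alpha(\nabla_e e') = \hat\alpha(e') + q\hat\alpha(e)$ at one endpoint while the "opposite-orientation" consistency $\hat\alpha(\bar e) = -\hat\alpha(e)$ at the other endpoint of the $2$-sphere produces a contradiction with the two fixed points of the biangle having the same two weights up to sign but with the almost-complex signs forced to be opposite. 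More concretely: on an invariant $2$-sphere that is a biangle, the two fixed points are the north and south pole of an $S^2$ on which $T$ acts with cohomogeneity one, and the tangent weight at one pole is the negative of the tangent weight at the other; but a $\Sigma^2$-face means these two tangent directions are also connected by a \emph{second} edge, and tracing the almost-complex signs around this biangle gives a sign contradiction. Hence every $2$-dimensional face of $\tilde\Gamma$ is a biangle-free small face, i.e.\ a triangle or a square, so by Lemma \ref{sec:gkm-graphs-4} every maximal simplex at every vertex is of type $\Delta^k$, and Theorem \ref{thm:covering} identifies $\tilde\Gamma$ with the vertex-edge graph of $\prod_i\Delta^{n_i}$.

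For the second stage, I would apply Lemma \ref{sec:gkm-graphs-5} to extend $(\tilde\Gamma,\tilde\alpha)$ to a GKM$_n$ graph $(\tilde\Gamma,\beta)$; as noted in the excerpt, the preferred signs propagate through this extension, so $(\tilde\Gamma,\beta)$ with the lifts $\hat\beta$ satisfies $\hat\beta(e) = -\hat\beta(\bar e)$ and Equation \eqref{eq:1} with $p=1$, $q\in\mathbb Z$. Then $(\tilde\Gamma,\beta)$ is the GKM graph of a torus manifold over $\prod_i\Delta^{n_i}$ (by the torus-graph correspondence recalled in Section \ref{Preliminaries}), and the almost-complex sign data on the graph is exactly the combinatorial shadow of an invariant almost complex structure on this torus manifold. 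By the classification in \cite{choi10:_quasit}, every torus manifold over a product of simplices admitting an invariant almost complex structure is a generalised Bott manifold; hence $(\tilde\Gamma,\beta)$ is the GKM graph of a generalised Bott manifold, which is the assertion.

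The main obstacle I anticipate is the first stage: cleanly extracting the sign contradiction that rules out $\Sigma^2$-faces. The subtlety is that the almost-complex signs are a property of the \emph{lift} $\hat\alpha$, and one must verify that consistency of these signs around a biangle (rather than around a triangle, where no contradiction arises) genuinely fails. The cleanest way is probably to argue on the level of the $2$-sphere: a $\Sigma^2$-face corresponds to an invariant $S^2$ together with an invariant $S^2$ realising the "other" edge, but in fact the two edges of a biangle in a torus graph correspond to the \emph{same} pair of fixed points with the two tangent weights $\pm\beta$ at each pole related by $\beta_{\text{north}} = -\beta_{\text{south}}$; the almost complex structure forces both tangent weights at a pole to determine the same orientation, and following the connection $\nabla$ around the biangle back to the start must return the identity on signs, which it does not. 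Alternatively, one can avoid this entirely by invoking that a non-negatively curved almost complex GKM$_4$ manifold has, by the structure theory, no $\Sigma^2$-faces because $\Sigma^2 = S^2$ with its standard circle action is not almost complex in a $T$-equivariant way compatible with an ambient complex structure — but making this rigorous still reduces to the sign bookkeeping above, so that is where the real work lies.
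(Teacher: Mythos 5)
Your route is the same as the paper's (first eliminate $\Sigma$-factors, then pass through the Choi--Masuda--Suh classification of quasitoric manifolds over products of simplices), but there is a genuine gap at the decisive step of your second stage. You assert that ``the almost-complex sign data on the graph is exactly the combinatorial shadow of an invariant almost complex structure on this torus manifold'' and on that basis apply Theorem 6.4 of \cite{choi10:_quasit}. That theorem, however, needs an honest $T$-invariant almost complex structure on the quasitoric manifold associated to $(\tilde\Gamma,\hat\beta)$; the implication ``graph-level preferred signs with $p=1$ and $\hat\beta(\bar e)=-\hat\beta(e)$ $\Rightarrow$ the associated quasitoric manifold admits an invariant almost complex structure'' is exactly the converse of the easy direction and is not automatic. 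The paper's proof is devoted almost entirely to this point: it invokes the criterion of \cite{MR2603265} (an invariant almost complex structure exists if and only if the sign of $\det\sigma_v\cdot\det A_v$ is the same at every vertex $v$, where $\sigma_v$ has as columns the edge directions at $v$ and $A_v$ the weights $\hat\beta$), and then verifies constancy of this sign along each edge $v_0v_1$: in both matrices one column changes sign and every other column changes by adding a multiple of that column --- the first fact using $\hat\beta(\bar e)=-\hat\beta(e)$ and the $p=1$ relation, the second using crucially that the underlying polytope is a product of simplices. Without this (or an equivalent) verification, your appeal to \cite{choi10:_quasit} is circular.

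Your first stage is the right idea but is left unfinished and is partly misstated: the two edges of a biangle at a vertex have linearly independent weights, so they are not ``the two tangent weights $\pm\beta$ at each pole.'' The clean argument is purely combinatorial: if $e_1,e_2$ at a vertex span a biangle face, then $\nabla$-invariance of the face and bijectivity of $\nabla_{e_1}$ force $\nabla_{e_1}e_2=\bar e_2$, so \eqref{eq:1} with $p=1$ and the almost complex sign convention give $-\hat\alpha(e_2)=\hat\alpha(\bar e_2)=\hat\alpha(e_2)+q\,\hat\alpha(e_1)$, i.e.\ $-2\hat\alpha(e_2)=q\,\hat\alpha(e_1)$, contradicting the linear independence of the two weights; no appeal to the geometry of invariant $2$-spheres is needed. (The paper disposes of this step simply by citing \cite[Lemma 5.6]{GoertschesWiemeler}.) Finally, note that you, like the paper, use that the preferred signs propagate through the extension of Lemma \ref{sec:gkm-graphs-5}; that is consistent with the setup stated before the theorem, so the real missing content is the determinant-sign verification above.
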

\begin{proof}
  By the argument in \cite[Lemma 5.6]{GoertschesWiemeler} there are no local factors in the GKM graph of $M$ with the combinatorial type of $\Sigma^k$. Therefore \((\tilde{\Gamma},\beta)\) is the GKM graph of a quasitoric manifold over a product of simplices \(P=\prod_i\Delta^{n_i}\).
  We embed each factor \(\Delta^{n_i}\) of \(P\) as a convex polytope in \(\R^{n_i}\).
  From these embeddings we get an embedding of \(P\) in \(\R^{n}=\prod_i\R^{n_i}\).

  By Theorem 6.4 of \cite{choi10:_quasit}, a quasitoric manifold over a product of simplices is equivariantly homeomorphic to a generalised Bott manifold if and only if it admits an invariant almost complex structure.
  
By \cite{MR2603265} (see also Theorem 7.3.24 in \cite{BuchstaberPanov2}), a quasitoric manifold admits an invariant almost complex structure if and only if the following product is independent of the vertex \(v\) we are looking at:
\begin{equation}
  \label{eq:4}
    \det \sigma_v \cdot \det A_v.
  \end{equation}
  Here we use the following notation:
  \begin{itemize}
  \item   \(\sigma_v\) is the matrix with columns the vectors in direction of the edges emanating from \(v\).
That means \(\sigma_v\) is the matrix with the column vectors \(v_i-v\) where \(v_1,\dots,v_n\) are the vertices of \(P\) connected to \(v\) by an edge of \(\tilde{\Gamma}\).
\item  \(A_v\) has as columns the labels \(\hat\beta(e)\) in the same order as in the first matrix.
  \end{itemize}
Note here that our matrix \(A_v\) is the inverse of the matrix appearing in \cite{MR2603265} with columns the labels \(\lambda(F)\)  of the facets \(F\) containing \(v\).

Therefore it suffices to show that the sign of the product (\ref{eq:4}) is independent of the vertex \(v\) of \(P\).
To do so let \(v_1\) and \(v_0\) be vertices connected by an edge.
Then \(v_1-v_0\) is tangent to one of the direct Euclidean factors of the product \(\mathbb{R}^n=\prod_{i} \mathbb{R}^{n_i}\), say the \(i_0\)-th.
Note that if we go from \(v_1\) to \(v_0\) the following happens:
\begin{itemize}
\item The column \(v_1-v_0\) in the first matrix changes sign.
\item To other columns of this matrix which correspond to vertices \(v_2\) of \(P\), such that \(v_2-v_0\) is tangent to the \(i_0\)-th factor of the above product, \(v_0-v_1\) is added.
  \item The colums of this matrix which correspond to vertices which differ in another coordinate are unchanged.
\item The column \(\hat\beta(v_1-v_0)\) in the second matrix changes sign.
\item To the other columns of this matrix a multiple of this weight is added.
\end{itemize}
Hence, in total the sign of the product does not change.
Since any GKM graph is connected the claim follows.
\end{proof}

Our next goal is to show that the covering \(\tilde\Gamma\rightarrow \Gamma\) is trivial. To do so we have to analyse the action of \(G\) on \(\tilde\Gamma\).
By Lemma~\ref{sec:conjecture-1}, \(G\) acts by automorphisms of the face poset of \(\tilde{\Gamma}\). 

Moreover, by Lemma~\ref{sec:conjecture}, \(G\) acts on \((\mathfrak{t}^n)^*\) in a way that is compatible with the weights \(\beta\).
In the situation that we have an \(T\)-invariant complex structure we can strengthen the result of that lemma.
This is the content of the next lemma.

\begin{lemma}
  There is an action \(\varphi: G\rightarrow \Aut((\mathfrak{t}^n)^*)\) such that
  \begin{equation*}
    \varphi(g)\hat\beta(e)=\hat\beta(ge)
  \end{equation*}
for all \(g\in G\), \(e\in E(\tilde{\Gamma})\).
\end{lemma}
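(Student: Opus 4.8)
The goal is to upgrade the $G$-action on ${\mathfrak{t}^n}^*/\{\pm 1\}$ provided by Lemma~\ref{sec:conjecture} to an honest linear action on ${\mathfrak{t}^n}^*$ respecting the \emph{signed} weights $\hat\beta$. Recall that in the presence of an invariant almost complex structure, every oriented edge $e$ of $\tilde\Gamma$ carries a well-defined weight $\hat\beta(e)\in{\mathfrak{t}^n}^*$ with $\hat\beta(\bar e)=-\hat\beta(e)$, and that the connection identity \eqref{eq:1} holds with $p=1$ and $q\in\mathbb Z$. The plan is to fix a base vertex $v_0$ with edges $e_1,\dots,e_n$, define $\varphi(g)$ as the unique linear map sending $\hat\beta(e_i)\mapsto\hat\beta(ge_i)$ (legitimate since the $\hat\beta(e_i)$ form a basis of ${\mathfrak{t}^n}^*$, as $(\tilde\Gamma,\beta)$ is an effective GKM$_n$ graph), and then show that this single definition propagates correctly to all edges, i.e. $\varphi(g)\hat\beta(e)=\hat\beta(ge)$ for every oriented edge $e$.

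First I would check that $\varphi(g)$ agrees with the map $A_g$ (or rather $A_g^*$) from the proof of Lemma~\ref{sec:conjecture} after forgetting signs; this is immediate since they are defined by the same formula on the basis at $v_0$. Hence by Lemma~\ref{sec:conjecture-1} and Lemma~\ref{sec:conjecture} we already know $\varphi(g)\hat\beta(e)=\pm\hat\beta(ge)$ for \emph{all} edges $e$, and only the sign needs to be pinned down. The key step is then a connectivity/propagation argument: I would show that if $\varphi(g)\hat\beta(e)=\hat\beta(ge)$ for all edges $e$ emanating from a vertex $w$, and $e'$ is an edge from $w$ to an adjacent vertex $w'$, then the same sign-correct identity holds for all edges emanating from $w'$. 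Indeed, any edge at $w'$ is of the form $\nabla_{e'}e''$ for some edge $e''$ at $w$ (together with $\bar{e'}$ itself), and by \eqref{eq:1} with $p=1$,
\[
\hat\beta(\nabla_{e'}e'')=\hat\beta(e'')+q\,\hat\beta(e'),\qquad q\in\mathbb Z.
\]
Applying $g$ and using that $g$ preserves the combinatorial structure and the connection ($\nabla_{ge'}(ge'')=g(\nabla_{e'}e'')$), we get the analogous identity $\hat\beta(\nabla_{ge'}(ge''))=\hat\beta(ge'')+q'\hat\beta(ge')$ for some $q'\in\mathbb Z$, where in fact $q'=q$ by the uniqueness of the GKM$_n$ extension (Lemma~\ref{sec:gkm-graphs-5}) together with the compatibility of $g$ with the unsigned weights. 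Applying $\varphi(g)$, which is linear, to the first equation and using the inductive hypothesis at $w$ yields $\varphi(g)\hat\beta(\nabla_{e'}e'')=\hat\beta(ge'')+q\hat\beta(ge')=\hat\beta(\nabla_{ge'}(ge''))=\hat\beta(g\nabla_{e'}e'')$; and $\varphi(g)\hat\beta(\bar{e'})=-\varphi(g)\hat\beta(e')=-\hat\beta(ge')=\hat\beta(g\bar{e'})$. Since $\tilde\Gamma$ is connected, induction on distance from $v_0$ gives the identity on all edges.

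Finally I would record that $\varphi$ is a homomorphism: $\varphi(gh)$ and $\varphi(g)\varphi(h)$ are linear maps agreeing on the basis $\hat\beta(e_1),\dots,\hat\beta(e_n)$ at $v_0$, because $\varphi(g)\varphi(h)\hat\beta(e_i)=\varphi(g)\hat\beta(he_i)=\hat\beta(ghe_i)=\varphi(gh)\hat\beta(e_i)$ by the identity just established. The main obstacle is the sign bookkeeping in the propagation step — one must be careful that the integer $q$ appearing in \eqref{eq:1} really is transported to the \emph{same} integer (not its negative) under $g$, which is where the uniqueness of the extension from Lemma~\ref{sec:gkm-graphs-5}, applied as in the proof of Lemma~\ref{sec:conjecture} to the two GKM$_n$ structures $e\mapsto\beta(e)$ and $e\mapsto A_g^{-1}\beta(ge)$, does the essential work. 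Everything else is a routine linear-algebra and connectivity argument.
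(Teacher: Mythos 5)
Your proposal is correct and follows essentially the same route as the paper: the paper's own proof also observes that the action from Lemma \ref{sec:conjecture} matches the preferred signs at the base point and is compatible with the connection, which itself respects the signs (\(p=1\)), so the sign-correct identity propagates over the connected graph. You merely spell out the propagation step (and the homomorphism check) that the paper leaves implicit, and your use of $2$-independence/uniqueness of the extension to pin down the sign and the integer $q$ is a valid way to fill in that detail.
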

\begin{proof}
By Lemma~\ref{sec:conjecture}, we only have to show that the \(G\)-action constructed there is compatible with the signs of the weights.

Note that by construction the \(G\)-action is compatible with the signs of the weights at the base point \(v_0\).
Moreover, it is compatible with the connection on \(\tilde{\Gamma}\).
Since the connection is compatible with the signs of the weights, it follows that the \(G\)-action is compatible with these signs.
\end{proof}

With this lemma we can show that there is a factor of \(\tilde{\Gamma}\) such that not all facets belonging to this factor are in the same \(G\)-orbit.

\begin{lemma}
\label{sec:gkm-manifolds-with-1}
  Let \((\tilde{\Gamma},\hat\beta)\) be the GKM graph of a generalised Bott manifold \(M\). Assume that the group \(G\) acts on this graph. Then by dualising the action we get an action on the characteristic pair \((\mathcal{P},\lambda)\)  associated to \(M\).
Here \(\mathcal{P}\) is the face poset of the orbit space of the \(T\)-action on  \(M\) and \(\lambda\) denotes its labeling by Lie algebras of isotropy groups.

Assume that for each factor of \(M/T\) all facets of this factor belong to the same \(G\)-orbit. Then the \(G\)-action on \(\mathfrak{t}\) does not have any non-zero fixed points.
\end{lemma}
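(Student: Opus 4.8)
The plan is to prove directly that $(\mathfrak{t}^*)^G=0$; since $G$ is finite, $\dim\mathfrak{t}^G=\dim(\mathfrak{t}^*)^G$, so this suffices. The only geometric input needed is the triangular shape of the characteristic data of a generalised Bott tower.

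Write $P=M/T=\prod_{i=1}^k\Delta^{n_i}$ and $n=\sum_i n_i=\dim_{\C}M$, and group the $n+k$ facets of $P$ into the blocks $\mathcal{F}_i=\{F_{i,0},\dots,F_{i,n_i}\}$, one for each factor. Recall that, by the setup of the lemma (dualising the action $\varphi$ of the preceding lemma), $G$ acts on $\mathfrak{t}$ and on the set of facets with $\lambda(gF)=g\cdot\lambda(F)$ for all $g\in G$ and all facets $F$; here equality is on the nose, not just up to sign, because the invariant almost complex structure pins down the signs of the weights. Since $G$ preserves the product decomposition of $P$, it permutes the blocks $\mathcal{F}_i$. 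Now fix an iterated bundle presentation $M=X_k\to\dots\to X_1\to\{pt\}$. By the structure of a generalised Bott tower (cf.\ \cite{choi10:_quasit}) there is a basis $\{e_{i,a}:1\le i\le k,\ 1\le a\le n_i\}$ of the integral lattice of $\mathfrak{t}$ with $\lambda(F_{i,a})=e_{i,a}$ for $1\le a\le n_i$ and
\[
\lambda(F_{i,0})=-\sum_{a=1}^{n_i}e_{i,a}+c_i ,
\]
where $c_i$ — the first Chern datum of the $i$-th projectivisation — is pulled back from $X_{i-1}$ and therefore lies in the span of $\{e_{i',a'}:i'<i\}$. In particular $\sum_{a=0}^{n_i}\lambda(F_{i,a})=c_i$ is a rational linear combination of the vectors $\lambda(F_{i',a'})$ with $i'<i$.

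Let now $\chi\in(\mathfrak{t}^*)^G$. Since $\chi$ is $G$-invariant and $\lambda(gF)=g\cdot\lambda(F)$, the function $F\mapsto\langle\chi,\lambda(F)\rangle$ is constant on $G$-orbits of facets; by hypothesis all facets of a block $\mathcal{F}_i$ form a single $G$-orbit, so $\langle\chi,\lambda(F_{i,a})\rangle$ is independent of $a$. Call this common value $\tau_i$. Summing over $a$ gives
\[
(n_i+1)\,\tau_i=\sum_{a=0}^{n_i}\langle\chi,\lambda(F_{i,a})\rangle=\langle\chi,c_i\rangle ,
\]
and by the previous paragraph the right-hand side is a rational linear combination of the numbers $\langle\chi,\lambda(F_{i',a'})\rangle=\tau_{i'}$ with $i'<i$. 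By induction on $i$ it follows that $\tau_i=0$ for all $i$: for $i=1$ the right-hand side is an empty sum, so $(n_1+1)\tau_1=0$ and hence $\tau_1=0$ since $n_1+1\ne0$, and the inductive step is identical. Thus $\langle\chi,\lambda(F)\rangle=0$ for every facet $F$, and since the $\lambda(F)$ span $\mathfrak{t}$ ($T$ acting effectively on $M$) we conclude $\chi=0$. Therefore $(\mathfrak{t}^*)^G=0$, and so $\mathfrak{t}^G=0$.

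The one substantive ingredient is the structural fact used in the second paragraph: a generalised Bott manifold admits characteristic data in which, for a suitable order of the factors, the block sum $c_i$ lies in the span of the characteristic vectors of the earlier blocks. The group $G$ may permute the factors of $P$ arbitrarily among themselves, but the partition of the facets into $G$-orbits is only a coarsening of the partition into blocks, and this triangularity is precisely what makes the induction go through. I do not expect any other difficulty; the remaining points (the compatibility of signs, the effectiveness, and the equality $\dim\mathfrak{t}^G=\dim(\mathfrak{t}^*)^G$) are routine and parallel to bookkeeping already carried out in the proof of Corollary \ref{sec:upper-bound-dimens-2}.
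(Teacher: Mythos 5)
Your proof is correct and rests on exactly the same inputs as the paper's: the upper-triangular Bott-tower normal form of the characteristic data (Proposition 6.2/Remark 6.3 of \cite{choi10:_quasit}), the sign-compatibility \(\lambda(gF)=g\lambda(F)\) coming from the preceding lemma, and the hypothesis that the facets of each factor form a single \(G\)-orbit. The paper runs the induction on the primal side (the span of the blocks with vanishing block sum is \(G\)-invariant with trivial invariants, then pass to the quotient of \(\mathfrak t\)), whereas you run the equivalent dual induction by pairing an invariant covector with the block sums \(\sum_a\lambda(F_{i,a})=c_i\); this is essentially the same argument, in a slightly streamlined form.
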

\begin{proof}
  Since \(M\) is a generalised Bott manifold, we can order the factors \(\Delta_1,\dots,\Delta_n\) of \(M/T\) and filter \(V_1\subset V_2\subset\dots\subset V_n=\mathfrak{t}\)  the Lie algebra of \(T\) in such a way that
  \begin{itemize}
  \item \(\dim V_i=\dim \prod_{k\leq i} \Delta_k\)
  \item \(\lambda(F_i)\in V_i\) for each facet of \(M/T\) belonging to \(\Delta_i\).
  \end{itemize}
The above filtration of \(\mathfrak{t}\) is related to the fact that the reduced characteristic matrix of \(M\) can be assumed to be a upper triangular vector matrix (see Proposition 6.2 and Remark 6.3 of \cite{choi10:_quasit}). Here the characteristic matrix of \(M\) is the matrix which has as its columns the vectors \(\lambda(F)\), \(F\in\mathcal{F}(M/T)\). Moreover the reduced characteristic matrix has precisely one column \(\lambda(F_i)\) for each factor \(\Delta_i\) of \(M/T\) such that \(F_i\) is a facet of \(\Delta_i\) (see \cite{choi10:_quasit} for a more precise description).

After one more reordering of the facets we can assume that there is an
\(i_0\geq 1\) such  that \[\dim \langle \lambda(F_k);\; F_k \text{ is a facet of } \Delta_{i}\rangle=\dim \Delta_{i}\]
for all \(i\leq i_0\) and
\[\dim \langle \lambda(F_k);\; F_k \text{ is a facet of } \Delta_{i}\rangle=\dim \Delta_{i}+1\]
for \(i>i_0\).
So the factors \(\Delta_i\), \(i\leq i_0\) are precisely those factors for which the corresponding column in the reduced characteristic matrix does not have non-zero off-diagonal entries.

Then for each \(i\leq i_0\), \[\sum_{F_{ki}} \lambda(F_{ki})=0,\]
where the sum is over those facets \(F_{ki}\) which belong to \(\Delta_{i}\).
Note that this sum is non-zero for \(i>i_0\).

Since \(V_{i_0}\) is generated by the \(\lambda(F_{ki})\) with \(i\leq i_0\) it follows that \(V_{i_0}\) is invariant under the \(G\)-action and \(V_{i_0}^G=0\).
Hence the claim follows by considering the \(G\)-representation \(\mathfrak{t}/V_{i_0}\) and induction over the dimension of \(\mathfrak{t}\).
\end{proof}

\begin{cor}
  Let \(M\) be a GKM$_4$ manifold with an invariant metric of non-negative sectional curvature and an invariant almost complex structure with GKM graph \(\Gamma\).

Then there is a factor of \(\tilde{\Gamma}\) such that not all its facets belong to the same \(G\)-orbit.
\end{cor}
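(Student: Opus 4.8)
The plan is to assume the opposite and derive a contradiction by confronting Lemma~\ref{sec:gkm-manifolds-with-1} with Lemma~\ref{sec:gkm-manifolds-with}. First I would record the standing hypotheses of this section: since $M$ is non-negatively curved and GKM$_4$, Lemmas~\ref{sec:gkm-graphs} and~\ref{sec:gkm-graphs-1} show that $\Gamma$ is a graph with small three-dimensional faces, so all the constructions above apply. In particular, by Theorem~\ref{thm:graphofBott} the covering graph $(\tilde\Gamma,\hat\beta)$ is the GKM graph of a generalised Bott manifold whose orbit space is a product of simplices $P=\prod_i\Delta^{n_i}$, and by Lemma~\ref{sec:conjecture} (and the lemma just above Lemma~\ref{sec:gkm-manifolds-with-1}) the deck group $G$ acts on $\mathfrak t^n$, in fact on $\check T^n$, compatibly with the labels $\beta$.

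Next I would assume, for contradiction, that for every factor $\Delta^{n_i}$ of $\tilde\Gamma$ all the facets of $P$ belonging to that factor lie in a single $G$-orbit. Applying Lemma~\ref{sec:gkm-manifolds-with-1} to this generalised Bott manifold --- with the $G$-action on its characteristic pair obtained by dualising the action on $\tilde\Gamma$ --- yields that the $G$-action on $\mathfrak t^n$ has no non-zero fixed points, i.e.\ $(\mathfrak t^n)^G=0$.

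On the other hand, Lemma~\ref{sec:gkm-manifolds-with} supplies the injective linear map $\phi^*\colon\mathfrak t^k\to\mathfrak t^n$, dual to the restriction map $\phi$ with $\alpha=\phi\circ\beta$, whose image $\phi^*(\mathfrak t^k)$ is fixed pointwise by $G$; this is exactly the assertion there that $G$ acts trivially on $\check T^k=\phi^*(T^k)$. Since $M$ is GKM$_4$, the torus $T^k$ acting on it satisfies $k\geq 4$, so $\phi^*(\mathfrak t^k)$ is a non-zero $G$-fixed subspace of $\mathfrak t^n$, contradicting $(\mathfrak t^n)^G=0$. Hence at least one factor of $\tilde\Gamma$ must have facets not all contained in the same $G$-orbit, which is the claim.

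The one point I would check carefully --- and which I do not expect to present a real obstacle --- is the bookkeeping: that the $G$-action on $\mathfrak t^n$ entering Lemma~\ref{sec:gkm-manifolds-with-1} (via dualising the action on the characteristic pair) agrees with the one coming from Lemmas~\ref{sec:conjecture} and~\ref{sec:gkm-manifolds-with}, and that ``factor of $\tilde\Gamma$'' in the corollary matches ``factor of $M/T$'' in Lemma~\ref{sec:gkm-manifolds-with-1}. Both identifications are immediate from how these objects are constructed.
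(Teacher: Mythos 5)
Your argument is correct and is exactly the paper's intended proof: the paper's one-line proof of this corollary is to combine Lemma \ref{sec:gkm-manifolds-with} (the image $\phi^*(\mathfrak t^k)$ is a non-zero $G$-fixed subspace, using effectiveness so that $\phi^*$ is injective) with Lemma \ref{sec:gkm-manifolds-with-1} (which would force $(\mathfrak t^n)^G=0$ if all facets of every factor lay in one $G$-orbit), with Theorem \ref{thm:graphofBott} guaranteeing that the latter lemma applies. Your bookkeeping remarks about the compatibility of the $G$-actions are also consistent with how the paper sets things up, so nothing is missing.
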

\begin{proof}
  This follows from Lemmas \ref{sec:gkm-manifolds-with} and \ref{sec:gkm-manifolds-with-1}.
\end{proof}

Now we can prove the following

\begin{theorem}\label{thm:coveringtrivial}
  The covering \(\tilde{\Gamma}\rightarrow \Gamma\) is trivial.
\end{theorem}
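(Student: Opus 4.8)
By Theorem~\ref{thm:graphofBott} — whose proof uses that the almost complex structure on $M$ forbids maximal simplices of type $\Sigma^k$, cf.\ \cite[Lemma 5.6]{GoertschesWiemeler} — the graph $\tilde\Gamma$ is the vertex--edge graph of $P=\prod_{i=1}^{r}\Delta^{n_i}$ and $(\tilde\Gamma,\beta)$ is the GKM graph of a generalized Bott manifold. I will use the following properties of the deck group $G$: it acts freely on $V(\tilde\Gamma)$ (the covering $\pi$ is Galois), by automorphisms of the face poset of $P$ (Lemma~\ref{sec:conjecture-1}), compatibly with the connection $\tilde\nabla$, and — by the lemma immediately preceding this theorem — compatibly with the \emph{signed} weights, i.e.\ there is $\varphi\colon G\to\Aut((\mathfrak t^n)^*)$ with $\varphi(g)\hat\beta(e)=\hat\beta(ge)$. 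Finally, $\Gamma$ itself, being the GKM graph of the almost complex GKM$_4$ manifold $M$, is a graph with small three-dimensional faces in which \emph{every} two-dimensional face is a triangle or a square (no biangles, again by \cite[Lemma 5.6]{GoertschesWiemeler}), and through any three edges at a vertex there is a genuine three-dimensional face.

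\textbf{Strategy.} The plan is to argue by contradiction. Assume $G\neq\{e\}$ and choose $g\in G$ of prime order $p$. The Corollary just proved supplies a factor, say $\Delta:=\Delta^{n_{i_0}}$, whose $n_{i_0}+1$ facets do not all lie in a single $G$-orbit. First I would use the iterated-bundle structure of $(\tilde\Gamma,\hat\beta)$ — the upper-triangular form of the reduced characteristic matrix, Proposition~6.2 and Remark~6.3 of \cite{choi10:_quasit} — together with the sign-compatibility of $\varphi$ to constrain how $g$ acts with respect to the product factors: to control the permutation of factors induced by $g$ and the vertex-permutations it induces on the factors it preserves. Then, exploiting that $\Delta$ has at least two $G$-orbits of facets, I would exhibit in $\tilde\Gamma$ one of the following degeneracies. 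Either some $g'\in G$ maps a square face $S$ of $\tilde\Gamma$ (a product of two edges lying in different factors) onto itself; since $g'$ is fixed-point-free it then acts on the $4$-cycle $S$ as the order-two rotation, so $\pi(S)$ is a biangle face of $\Gamma$, contradicting that $\Gamma$ has only triangular and square two-faces. Or else there is a three-dimensional face $F$ of $\tilde\Gamma$ with $gF\neq F$ but $gF\cap F\neq\emptyset$; then the connected $\tilde\nabla$-invariant subgraph $\bigcup_{m}g^{m}F$ maps onto the unique three-dimensional face of $\Gamma$ through the images at a shared vertex of three edges of $F$, yet is strictly more than $3$-valent at that vertex, contradicting that $\Gamma$ is a graph with small three-dimensional faces.

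\textbf{Main obstacle.} The real work — and the step I expect to be hardest — is the combinatorial case analysis establishing that the presence of a factor with at least two $G$-orbits of facets, together with freeness of the action and its compatibility with the Bott-tower and signed-weight structure, genuinely forces one of the two degeneracies above. One reduces to $g$ of prime order and distinguishes whether $g$ fixes the factor $\Delta$ or cyclically permutes it with isomorphic factors. In the first case the non-transitivity of $g$ on the vertices (equivalently facets) of $\Delta$, combined with freeness — which forbids $g$ from fixing any vertex of the product, hence forces $g$ to ``move'' some spectator coordinate — produces the required square or three-dimensional face in a neighbourhood of an appropriately chosen vertex. In the permuting case one runs the same argument inside the union of the cyclically permuted factors. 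Once every nontrivial $g$ is excluded in this way, $G=\{e\}$ and $\pi\colon\tilde\Gamma\to\Gamma$ is the trivial covering.
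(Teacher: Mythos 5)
Your write-up is a strategy sketch rather than a proof: the decisive step is missing. You set up the right ingredients (Theorem \ref{thm:graphofBott}, freeness of $G$ on vertices, compatibility with the connection and the signed weights, and the Corollary giving a factor $\Delta$ whose facets lie in at least two $G$-orbits), but the entire argument then hinges on the claim that any nontrivial $g\in G$ forces either a $g'$-invariant square face or a three-dimensional face $F$ with $gF\neq F$, $gF\cap F\neq\emptyset$. You explicitly defer this (``the step I expect to be hardest'') and give no argument for it; without it nothing is proved. It is also not clear that this dichotomy is the right one to aim for: even granting it, the second branch needs care (you must show the image of $F$ in $\Gamma$ really violates $3$-valence or uniqueness of the $3$-face, using injectivity of $\pi$ on edge stars), and in the first branch you must rule out order-$4$ rotations and reflections of the square, not just quote freeness on vertices. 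So as it stands there is a genuine gap, and the proposed route looks substantially harder than what is needed.

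The paper's proof avoids all of this case analysis. Starting from the same Corollary, pick a $G$-orbit $H$ of facets of the distinguished simplex factor $\Delta$; since $H$ is not all of the facets of $\Delta$, the intersection $B_1=\bigcap_{F\in H}F$ is a \emph{non-empty} $G$-invariant face of $\tilde\Gamma$ (this is where the simplex structure is used: any proper subset of facets of $\Delta^{n_i}$ has non-empty intersection). The preimage of $B_1$ in the generalised Bott manifold is again a generalised Bott manifold, so the Corollary (via Lemmas \ref{sec:gkm-manifolds-with} and \ref{sec:gkm-manifolds-with-1}) applies to it, producing a strictly smaller non-empty $G$-invariant face, and by induction one eventually obtains a $G$-fixed vertex, contradicting freeness of the deck group on $V(\tilde\Gamma)$. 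If you want to salvage your approach you would have to supply the full combinatorial analysis you postponed; it is much more economical to run the face-intersection induction, which uses exactly the hypotheses you already listed.
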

\begin{proof}
  Let \(\Delta\) be a factor of \(\tilde{\Gamma}\) such that not all facets belonging to this factor are in the same \(G\)-orbit.
  Let \(H\) be one of the \(G\)-orbits of the facets of \(\Delta\).
Then \(B_1=\bigcap_{F\in H} F\) is a \(G\)-invariant non-empty face of \(\tilde{\Gamma}\).
Moreover, the preimage in the generalised Bott manifold of this face is a generalised Bott manifold.
Hence, by the above discussion there is a factor of \(B_1\) such that not all facets of this factor belong to the same \(G\)-orbit.

Hence by induction we can construct a non-empty face fixed by \(G\).
This is a contradiction because \(G\) acts freely on the set of vertices of \(\tilde{\Gamma}\).
\end{proof}

As a consequence of the discussion in this section we get:

\begin{cor}
\label{sec:gkm-manifolds-with-3}
  Let \(M\) be a non-negatively curved or rationally elliptic GKM$_4$ manifold which admits an invariant almost complex structure.
  Then the rational cohomology ring of \(M\) is isomorphic to the rational cohomology ring of a generalised Bott manifold.  
\end{cor}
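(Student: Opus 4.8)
The plan is simply to assemble the results of Sections~\ref{sec:coverings-gkm-graphs}--\ref{sec:gkm-manifolds-with-2}, since by this point all the substantial work has been done. First I would check that \(M\) falls under the standing assumptions of Section~\ref{sec:gkm-manifolds-with-2}: being GKM$_4$, the manifold \(M\) is in particular GKM$_3$, so by Lemma~\ref{sec:gkm-graphs} every two-dimensional face of its GKM graph \(\Gamma\) has at most four vertices; combined with Lemma~\ref{sec:gkm-graphs-1} (and the fact that an almost complex manifold is orientable) this shows that \(\Gamma\) is a graph with small three-dimensional faces. Hence Theorems~\ref{thm:covering}, \ref{thm:graphofBott} and \ref{thm:coveringtrivial} are all available for \(\Gamma\).

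Next, by Theorem~\ref{thm:coveringtrivial} the covering \(\tilde\Gamma\to\Gamma\) from Theorem~\ref{thm:covering} is trivial, so that \(\Gamma\) itself is the vertex-edge graph of a product of simplices \(\prod_i\Delta^{n_i}\) --- the factors of type \(\Sigma^{m_i}\) being excluded, exactly as in the proof of Theorem~\ref{thm:graphofBott}, by \cite[Lemma~5.6]{GoertschesWiemeler} because \(M\) carries an invariant almost complex structure. By Lemma~\ref{sec:gkm-graphs-5} the GKM$_4$ graph \((\Gamma,\alpha)\) then extends to a GKM$_n$ graph \((\Gamma,\beta)\), and Theorem~\ref{thm:graphofBott} identifies \((\Gamma,\beta)\) as the GKM graph of a generalised Bott manifold \(X\).

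It then remains to compare \(H^*(M;\mathbb{Q})\) with \(H^*(X;\mathbb{Q})\). Since the covering is trivial the deck transformation group \(G\) is trivial, so the chain of isomorphisms of Section~\ref{sec:model} --- equivalently, Theorem~\ref{sec:introduction-1} together with its manifold refinement in Theorem~\ref{thm:amodel} --- gives \(H^*(M;\mathbb{Q})\cong H^*(\tilde O;\mathbb{Q})\), where \(\tilde O\) is the torus manifold over \(\prod_i\Delta^{n_i}\) built from the extended graph \((\Gamma,\beta)\); here \(\tilde O\) is a manifold precisely because \(M\) is. As \(\tilde O\) and \(X\) are torus manifolds over the same product of simplices realising the same GKM graph \((\Gamma,\beta)\), they have isomorphic rational cohomology rings --- by the GKM Theorem~\ref{thm:gkm}, or because such torus manifolds are determined up to equivariant homeomorphism by their characteristic data. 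This yields \(H^*(M;\mathbb{Q})\cong H^*(X;\mathbb{Q})\), which is the assertion.

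I expect no serious obstacle here: all the hard steps (Theorem~\ref{thm:covering}, Lemma~\ref{sec:gkm-graphs-5}, and Theorems~\ref{thm:graphofBott} and \ref{thm:coveringtrivial}) are already proved, so the corollary is essentially a synthesis. The only point requiring a little care is the last identification of the model torus manifold \(\tilde O\) with the generalised Bott manifold \(X\) of Theorem~\ref{thm:graphofBott}; this is harmless, since both are pinned down by the common GKM graph \((\Gamma,\beta)\), and in any case one can bypass it by observing that \(H^*(M;\mathbb{Q})\) depends only on \((\Gamma,\alpha)\), hence on \((\Gamma,\beta)\), and reading the ring off directly from the iterated projective bundle formula recalled in the introduction.
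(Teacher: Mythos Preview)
Your proposal is correct and follows essentially the same approach as the paper: the corollary is presented there without a separate proof, simply as ``a consequence of the discussion in this section,'' and your synthesis of Theorems~\ref{thm:graphofBott} and \ref{thm:coveringtrivial} together with the model construction of Section~\ref{sec:model} is exactly that discussion. The only minor remark is that your final step identifying \(\tilde O\) with \(X\) is unnecessary: once the covering is trivial, the generalised Bott manifold produced by Theorem~\ref{thm:graphofBott} \emph{is} the model torus manifold \(\tilde O\) of Section~\ref{sec:model}, so there is nothing further to compare.
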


\section{Torus manifolds revisited}
\label{sec:torus-manif-revis}

The proof of the classification of simply connected non-negatively curved torus manifolds given in \cite{MR3355120} proceeds as follows.
First one uses results of Spindeler \cite{spindeler} to show that a simply connected non-negatively curved torus manifold \(M\) is locally standard and that all faces of \(M/T\) are diffeomorphic to standard discs after smoothing the corners.
Moreover, one knows that the two-dimensional faces have at most four vertices.
Using a combinatorial argument (Proposition 4.5 in the cited paper and Theorem~\ref{sec:torus-manif-revis-1} below) shows that the orbit space of \(M\) is combinatorially equivalent to a product \(\prod_i \Delta^{n_i}\times \prod_i\Sigma^{n_i}\).
From this one can then deduce that \(M/T\) is diffeomorphic to that product and it follows that \(M\) is equivariantly diffeomorphic to a quotient of a product of spheres by a free linear torus action.

The proof of Proposition 4.5 given in \cite{MR3355120} is highly technical, very long and not really enlightening. Therefore we want to give a short proof of that proposition based on the results of our paper in this section.

We start by stating the proposition as the following theorem.
\begin{theorem}
  \label{sec:torus-manif-revis-1}
  Let \(Q\) be the orbit space of a locally standard torus manifold \(M^{2n}\), such that all faces of \(Q\) are acyclic over the integers. Assume that each two-dimensional face of \(Q\) has at most four vertices. Then \(Q\) is combinatorially equivalent to a product \(\prod_i \Delta^{n_i}\times \prod_i\Sigma^{n_i}\).
\end{theorem}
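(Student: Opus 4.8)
The plan is to reduce Theorem~\ref{sec:torus-manif-revis-1} to the graph-theoretic results already established, by recognising the vertex-edge graph of $Q$ as a graph with small three-dimensional faces. Since $M$ is locally standard, the vertex-edge graph $\Gamma$ of $Q$ is a torus graph, and in particular an $n$-valent GKM$_n$ graph equipped with its canonical connection. The acyclicity hypothesis on all faces of $Q$ forces, exactly as in the proof of Lemma~\ref{sec:gkm-graphs-1}, that every two-dimensional face is a disc and — combined with the at-most-four-vertices assumption — that every three-dimensional face of $\Gamma$ has one of the combinatorial types $\Delta^3, \Sigma^3, \Delta^2\times I, \Sigma^2\times I, I^3$. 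Condition (1) of Definition~\ref{sec:gkm-graphs-6} is just the $n\geq 4$ case of Lemma~\ref{lem:uniquefaceofabstractgkmgraph}. Hence $\Gamma$ is a graph with small three-dimensional faces, and Theorem~\ref{thm:covering} applies: fixing a vertex $x$ with maximal simplices $\sigma_1,\dots,\sigma_k$, there is a covering $\pi:\tilde\Gamma=\prod_i\sigma_i\to\Gamma$ compatible with the connections, where each $\sigma_i$ is a $\Delta^{n_i}$ or a $\Sigma^{m_i}$.

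The key step is then to upgrade this covering of \emph{graphs} to a combinatorial equivalence of the \emph{manifolds with corners} $\prod_i\Delta^{n_i}\times\prod_i\Sigma^{m_i}$ and $Q$, i.e.\ to show the covering $\pi$ is trivial. First I would observe that $\pi$ induces a map of face posets: by Lemma~\ref{sec:gkm-graphs-4} and Lemma~\ref{sec:gkm-graphs-2}, $\pi$ carries the $l$-valent $\nabla$-invariant subgraphs of $\tilde\Gamma$ (the faces of $P=\prod_i\Delta^{n_i}\times\prod_i\Sigma^{m_i}$) to such subgraphs of $\Gamma$ (the faces of $Q$), preserving dimension and combinatorial type. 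The triviality of $\pi$ should follow from a connectedness-of-faces argument analogous to the one in the proof of Theorem~\ref{thm:coveringtrivial}: if $\pi$ identified two distinct vertices $\tilde v\neq\tilde v'$ of $\tilde\Gamma$, consider a shortest edge-path between them in $\tilde\Gamma$; its first edge lies in some factor $\sigma_{i_0}$, and one can localise to that factor to derive a contradiction with the fact that each face of $Q$ — in particular each two-dimensional face, which is a disc — is simply connected, so that a deck transformation fixing a facet inductively fixes a smaller face and ultimately a vertex, while the deck group acts freely on vertices. Concretely, since each face of $Q$ is acyclic (hence simply connected) and the two-dimensional faces are discs, the restriction of $\pi$ over any face of $Q$ must be a homeomorphism on each two-dimensional face, and one propagates this up through the face lattice.

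I would then conclude as follows: a covering of the vertex-edge graph of a product of simplices and $\Sigma$'s that restricts to a homeomorphism on every two-dimensional face, and is compatible with connections, is necessarily trivial, because such a product is simply connected as a CW-complex and the two-skeleton already detects $\pi_1$. Therefore $\tilde\Gamma=\Gamma$ as graphs with connection, and the induced isomorphism of face posets together with the standard fact (using contractibility/acyclicity of faces, as in \cite{MasudaPanov}, \cite{DavisJanuszkiewicz}) that a nice manifold with corners all of whose faces are acyclic discs is determined up to combinatorial equivalence by its face poset, yields that $Q$ is combinatorially equivalent to $\prod_i\Delta^{n_i}\times\prod_i\Sigma^{m_i}$ (and the exponents match up since $\dim Q = n = \sum_i n_i + \sum_i m_i$).

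\textbf{Main obstacle.} The delicate point is not the local structure — Lemmas~\ref{sec:gkm-graphs-1}--\ref{sec:gkm-graphs-their} and Theorem~\ref{thm:covering} do all the local work — but rather passing from the combinatorial covering $\pi:\tilde\Gamma\to\Gamma$ to an \emph{honest} combinatorial equivalence of manifolds with corners, i.e.\ ruling out nontrivial covers. The graph $\Gamma$ carries less information than $Q$ a priori (a covering of graphs need not come from a covering of the manifold with corners), so one must genuinely use the hypothesis that \emph{all} faces of $Q$ — not merely $Q$ itself — are acyclic, to show that $\pi$ restricted over every face is injective on vertices, and hence that $\pi$ is a bijection. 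Making this induction over the face lattice precise, and checking that the reconstruction of $Q$ from its face poset is unambiguous in the presence of the $\Sigma^{m}$ factors (whose faces are again products of smaller $\Delta$'s and $\Sigma$'s), is where the real content lies.
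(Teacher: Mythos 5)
There is a genuine gap. Your reduction to Theorem \ref{thm:covering} is exactly the paper's route, and you correctly identify the crux: showing the covering \(\pi:\tilde\Gamma\to\Gamma\) is trivial. But your argument for triviality does not work. The fact that \(\pi\) restricts to an isomorphism on every two-dimensional (indeed every proper) face is automatic from the construction and does not force triviality: what would be needed is simple connectivity of the \emph{base} (the quotient 2-complex), not of \(P\). The antipodal action of \(\mathbb{Z}/2\) on the vertex-edge graph of \(I^n=[-1,1]^n\) (for \(n\geq 4\)) is a connection-compatible, vertex-free action whose quotient still has small three-dimensional faces, whose quotient graph is covered nontrivially by \(\tilde\Gamma\) with \(\pi\) restricting to isomorphisms on all proper faces, and whose candidate orbit space (the cone over \(\mathbb{R}P^{n-1}\), with all proper faces cubes) has all faces acyclic over \(\mathbb{Z}\). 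So no argument using only face acyclicity, simple connectivity of faces, and ``the two-skeleton detects \(\pi_1\)'' can rule out a nontrivial cover; your claimed conclusion is false as a purely combinatorial statement. Likewise, your induction ``a deck transformation fixing a facet fixes a vertex'' only shows that the deck group acts freely on facets --- which is true in the antipodal example --- not that the deck group is trivial. (A smaller issue: integral acyclicity of higher-dimensional faces does not by itself give simple connectivity, though this is not where the argument breaks.)

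The paper closes exactly this gap using input you never invoke: the characteristic labeling \(\lambda\) of the facets by isotropy data of the locally standard action. After showing (by induction on \(n\), with the \(n\le 3\) case handled separately as in Lemma \ref{sec:gkm-graphs-1}) that \(G\) acts freely on the set of facets of \(\tilde\Gamma\), the \(G\)-invariance of \(\lambda\) together with the linear independence of the labels at each vertex forces \(F\cap gF=\emptyset\) for every facet \(F\) and every nontrivial \(g\in G\). This pins down the only possible nontrivial situation as precisely \(\tilde\Gamma=\) vertex-edge graph of \(I^n\) with \(G=\mathbb{Z}/2\) acting antipodally, and that final case is excluded not combinatorially but by Lemma 4.4 of \cite{MR3355120}, which uses the torus manifold structure. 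To repair your proof you would need to incorporate the labeling argument and some substitute for that lemma; the face-acyclicity hypotheses alone are insufficient.
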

\begin{proof}
  For \(n\leq 3\) this follows as in the proof of Lemma~\ref{sec:gkm-graphs-1}.
  Therefore assume \(n\geq 4\). Then \(M\) is a GKM$_4$-manifold.
  Let \(\Gamma\) be the vertex-edge graph of \(Q\). It is the same as the GKM graph of \(M\). Therefore we have a normal covering \(\tilde\Gamma\rightarrow \Gamma\) by the vertex edge graph \(\tilde\Gamma\) of a product \(\prod_i \Delta^{n_i}\times \prod_i\Sigma^{n_i}\).
  Let \(G\) be the deck transformation group of this covering.
  Since the covering is compatible with the connections on \(\Gamma\) and \(\tilde\Gamma\) and the connections determine the face structure, it suffices to show that the covering is trivial.

  We do this by induction on \(n\) starting with \(n=4\).
  If \(n=4\) then because the covering respects the local face structure, its restriction to any \(3\)-dimensional face of \(\tilde\Gamma\) is a trivial covering.
Moreover, if \(n>4\) the same holds for \((n-1)\)-dimensional faces of \(\tilde\Gamma\) by the induction hypothesis.

Therefore in both cases the action of \(G\) on the set of facets of \(\tilde\Gamma\) is free. Moreover, because the labeling \(\lambda\) of the facets with isotropy groups is invariant under the \(G\)-action, we must have
\begin{equation*}
  F\cap gF=\emptyset
\end{equation*}
for all facets \(F\) and all non-trivial \(g\in G\).

There is only one possibility how this can happen:
\(\tilde\Gamma\) is the vertex-edge graph of \([-1,1]^n\) and \(G=\mathbb{Z}/2\) acts by multiplication with \(-1\) on each factor.
So the intersection of any two facets of \(Q\) has precisely two components and all facets are combinatorially equivalent to cubes \([-1,1]^{n-1}\).
But by Lemma 4.4 of \cite{MR3355120} such an orbit space \(Q\) with only acyclic faces does not exist.
\end{proof}

\begin{remark}
  Using the same proof and Proposition 5.1 (f) of \cite{GGKRW} one can show that the same conclusion as in the above theorem holds, if \(Q\) is the orbit space of a not necessary locally standard rationally elliptic torus orbifold.
\end{remark}

\small

\bibliography{nnc_gkm}{}
\bibliographystyle{alpha}

\normalsize

~\\Oliver Goertsches\\
Fachbereich Mathematik und Informatik,
Philipps-Universit\"at Marburg\\
Hans-Meerwein-Stra\ss e,
D-35032 Marburg,
Germany\\
\texttt{goertsch@mathematik.uni-marburg.de}

~\\Michael Wiemeler\\Mathematisches Institut, WWU M\"unster\\Einsteinstra\ss e 62, D-48149 M\"unster, Germany\\
\texttt{wiemelerm@uni-muenster.de}
\end{document}